\documentclass[a4paper]{article}
\usepackage[margin=25mm]{geometry}
\usepackage{amsmath}
\usepackage{amsthm}
\usepackage{amssymb}
\usepackage{amsfonts}
\usepackage{latexsym}
\usepackage{mathtools}
\usepackage{array}
\usepackage{bm}
\usepackage{comment} 
\usepackage{authblk}

\usepackage{tikz}
\usetikzlibrary{arrows.meta, positioning, calc}

\newtheorem{prop}{Proposition}[section]
\newtheorem{lem}[prop]{Lemma}
\newtheorem{thm}[prop]{Theorem}
\newtheorem{cor}[prop]{Corollary}

\theoremstyle{definition}
\newtheorem{dfn}[prop]{Definition}

\newtheorem{que}[prop]{Question}
\newtheorem{state}[prop]{Statement}

\numberwithin{prop}{section}
\numberwithin{equation}{section}

\newenvironment{lemproof}{
  
  \begin{proof}
}{
  \end{proof}
}

\title{Frame definability in second-order arithmetic}
\author{Yuto Takeda}
\affil{Mathematical Institute, Tohoku University, Japan}

\usepackage[T1]{fontenc}

\begin{document}
\maketitle
%
%\titlerunning{Abbreviated paper title}
% If the paper title is too long for the running head, you can set
% an abbreviated paper title here
%

%
%\authorrunning{Y. Takeda et al.}
% First names are abbreviated in the running head.
% If there are more than two authors, 'et al.' is used.
%
%\institute{Tohoku University, Sendai, Miyagi, JPN\and
%\email{sho.shimomichi.s8@dc.tohoku.ac.jp}\and
%\email{yuto.takeda.t8@dc.tohoku.ac.jp}\and
%\email{keita.yokoyama.c2@dc.tohoku.ac.jp}}
%

%
\renewcommand{\thefootnote}{\fnsymbol{footnote}}

\begin{abstract}
We study the reverse-mathematical strength of frame definability in modal logic. The central principle is the Valuation Extension Lemma (VEL), which asserts that every assignment of propositional variables on a frame extends to a full valuation. We show that, over $\mathrm{RCA}_0$, VEL is equivalent to $\mathrm{ACA}^{+}_0$, and as are frame-definability principles for Geach axioms and for $\mathbf{GL}$. We also obtain analogous $\mathrm{ACA}^{+}_0$-equivalences for the Barcan and Converse Barcan formulas in modal predicate logic. Finally, we examine variants of VEL for $\mathbf{CTL}$ and $\mathbf{LTL}$ and locate their strengths between familiar subsystems of second-order arithmetic.

\begin{comment}
This paper investigates the logical strength of frame definability in modal logic within the framework of second-order arithmetic. We show that, over $\mathrm{RCA}_0$, frame definability for Geach axioms as well as for $\mathbf{GL}$ is equivalent to $\mathrm{ACA}^{+}_0$. For modal predicate logic, frame definability concerning the properties of variable-domain frames corresponding to the Barcan formula and the Converse Barcan formula is likewise equivalent to $\mathrm{ACA}^{+}_0$ over $\mathrm{RCA}_0$. To analyze principles related to these frame definabilities, we examine the Valuation Extension Lemma (VEL), which asserts that any assignment to propositional variables on a frame can be extended to a valuation. We prove that VEL is equivalent to $\mathrm{ACA}^{+}_0$ over $\mathrm{RCA}_0$. In particular, we show that there is a computable frame such that
every valuation on it has computational complexity of $\emptyset^{(\omega)}$. Furthermore, VEL for Computation Tree Logic (CTL) lies between $\Sigma^1_2$-$\mathrm{DC}_0$ and $\Pi^1_1$-$\mathrm{CA}_0$, whereas VEL for Linear Temporal Logic (LTL) lies between $\mathrm{ACA}^{\prime}_0$ and $\mathrm{ACA}_0$.
\end{comment}
\end{abstract}
\section{Introduction}
In this paper, we investigate frame definability for modal logic in second-order arithmetic. In particular, we analyze, from the viewpoint of reverse mathematics, frame definability for Geach axioms, which have long been studied in modal logic, and frame definability for $\mathbf{GL}$ as an example of frame definability for formulas not expressible by Geach axioms. We show that the frame-definability principles for Geach axioms and for $\mathbf{GL}$ are both equivalent to $\mathrm{ACA}^{+}_0$ over $\mathrm{RCA}_0$. We also analyze frame definability in modal predicate logic, focusing on the Barcan formula, the Converse Barcan formula, and the corresponding properties of variable domains. In particular, frame definability for both the Barcan formula and the Converse Barcan formula is equivalent to $\mathrm{ACA}^{+}_0$ over $\mathrm{RCA}_0$.
%\vspace{1mm}

Behind these results lies an important relationship between frames and valuations, or evaluation maps, on them. Given a frame and an assignment of propositional variables on that frame, we call the assertion that this assignment can be extended to a valuation the Valuation Extension Lemma, or VEL for short. In the usual setting of modal logic, VEL is treated as an almost trivial property. Indeed, in many standard texts, proofs of completeness for normal modal logics first construct an assignment of propositional variables on a frame and then use the Truth Lemma to show that the resulting extended valuation assigns truth values to formulas in the expected way; the existence of this extended valuation itself is usually not treated as a separate issue (see, for example,~\cite{MR1837791}). By contrast, in second-order arithmetic, it is natural, by analogy with Simpson's treatment of valuations for first-order logic, to define a Kripke valuation as a $0,1$-valued function on pairs consisting of a state, or possible world, in a frame and a formula. Such a valuation is a set of natural numbers, so in second-order arithmetic one must pay attention to whether it actually exists as a set. Thus, VEL is nontrivial in second-order arithmetic. 
%\vspace{1mm}

We show that VEL is equivalent to $\mathrm{ACA}^{+}_0$ over $\mathrm{RCA}_0$. Furthermore, we show that there is a computable frame such that every valuation on it has computational complexity at least that of $\emptyset^{(\omega)}$. This implies that the difficulty of VEL does not depend on how propositional variables are assigned. This further suggests that not only frame definability for particular formulas, but any frame-definability result whose proof is not too difficult, for instance one formalizable in $\mathrm{ACA}^{+}_0$, should be equivalent to $\mathrm{ACA}^{+}_0$. As further analyses of VEL, we also treat VEL for Computation Tree Logic ($\mathbf{CTL}$) and Linear Temporal Logic ($\mathbf{LTL}$). These logics are well known in computer science, and their semantics can be naturally formalized in second-order arithmetic. We show that VEL for $\mathbf{CTL}$ follows from $\Sigma^1_2$-$\mathrm{DC}_0$ and implies $\Pi^1_1$-$\mathrm{CA}_0$. We also show that VEL for $\mathbf{LTL}$ follows from $\mathrm{ACA}^{\prime}_0$ and implies $\mathrm{ACA}_0$. These results provide a foundation for the reverse-mathematical study of $\mathbf{CTL}$ and $\mathbf{LTL}$, and form a first step toward analyzing a unified logic such as $\mathbf{CTL}^{*}$.
%\vspace{1mm}

Several studies have analyzed logic within second-order arithmetic. For example, over $\mathrm{RCA}_0$, Simpson~\cite{MR2517689} showed that G\"odel's completeness theorem is equivalent to $\mathrm{WKL}_0$, and Yamazaki~\cite{MR2010178} showed that the strong completeness theorem for intuitionistic predicate logic is equivalent to $\mathrm{ACA}_0$. For modal logic, we previously showed that the weak completeness theorem for propositional modal logic is provable in $\mathrm{RCA}_0$. We then showed that proving the strong completeness theorem via canonical models requires $\mathrm{ACA}_0$, whereas the strong completeness theorem itself is equivalent to $\mathrm{WKL}_0$ over $\mathrm{RCA}_0$~\cite{10.1007/978-3-031-95908-0_32}. Moreover, we analyzed two fundamental theorems in modal logic: the Hennessy--Milner theorem and the van Benthem characterization theorem. In~\cite{takeda2026bisimulationssecondorderarithmetic}, we show that, over $\mathrm{RCA}_0$, the Hennessy--Milner theorem is equivalent to $\mathrm{ACA}_0$. For the van Benthem characterization theorem, we showed that the semantic form is provable in $\mathrm{RCA}_0$, the syntactic form is provable in $\mathrm{PRA}$, and the hybrid form is equivalent to the weak completeness theorem for first-order logic over $\mathrm{RCA}_0$. The present work contains both foundational and further developments in the reverse-mathematical study of modal logic.
%\vspace{1mm}

In Section 2, we study frame definability for modal propositional logic. In Subsection 2.1, we show that VEL is equivalent to $\mathrm{ACA}^{+}_0$ over $\mathrm{RCA}_0$. We also show that Restricted VEL is equivalent to $\mathrm{ACA}^{\prime}_0$ over $\mathrm{RCA}_0$. Furthermore, we show that there is a computable frame such that every valuation on it has computational complexity at least that of $\emptyset^{(\omega)}$. In Subsection 2.2, we discuss the relationship between frame definability and VEL. In particular, we show that frame definability for Geach axioms and for $\mathbf{GL}$ is equivalent to $\mathrm{ACA}^{+}_0$ over $\mathrm{RCA}_0$. We also show that VEL for image-finite frames is equivalent to $\mathrm{ACA}_0$ over $\mathrm{RCA}_0$. In Subsection 2.3, we treat frame definability for modal predicate logic. In particular, we show that frame definability for the Barcan formula and for the Converse Barcan formula is equivalent to $\mathrm{ACA}^{+}_0$ over $\mathrm{RCA}_0$. In Section 3, we further investigate VEL. In Subsection 3.1, we show that VEL for $\mathbf{CTL}$ follows from $\Sigma^1_2$-$\mathrm{DC}_0$ and that, over $\mathrm{RCA}_0$, it implies $\Pi^1_1$-$\mathrm{CA}_0$. We also show that VEL for image-finite frames is equivalent to $\mathrm{ACA}^{+}_0$ over $\mathrm{RCA}_0$. In Subsection 3.2, we show that VEL for $\mathbf{LTL}$ follows from $\mathrm{ACA}^{\prime}_0$ and that, over $\mathrm{RCA}_0$, it implies $\mathrm{ACA}_0$. For the formalization of modal logic over $\mathrm{RCA}_0$, we refer to~\cite{10.1007/978-3-031-95908-0_32}.
%\vspace{1mm}

%\subsection{Framework}
We work in the framework of subsystems of second-order arithmetic, with base theory $\mathrm{RCA}_0$. $\mathrm{RCA}_0$ consists of the axioms of Robinson arithmetic, together with the $\Sigma^0_1$-induction scheme and the $\Delta^0_1$-comprehension scheme. $\mathrm{WKL}_0$ consists of the axioms of $\mathrm{RCA}_0$ together with a formalization of weak K\"onig's lemma. $\mathrm{ACA}_0$ consists of the axioms of $\mathrm{RCA}_0$ with the comprehension scheme for all arithmetical formulas. $\Pi^1_1$-$\mathrm{CA}_0$ consists of the axioms of $\mathrm{RCA}_0$ with the comprehension scheme for all $\Pi^1_1$ formulas. Furthermore, we consider two variants of $\mathrm{ACA}_{0}$, namely, $\mathrm{ACA}^{\prime}_{0}$ and $\mathrm{ACA}^{+}_{0}$. The system $\mathrm{ACA}^{\prime}_{0}$ consists of $\mathrm{RCA}_{0}$ plus the axiom stating closure under all finite jumps, that is, the assertion that for every $X$ and every $n$, there is a set $Y$ such that $Y=X^{(n)}$; the system $\mathrm{ACA}^{+}_{0}$ consists of $\mathrm{RCA}_{0}$ plus the axiom
stating the existence of the $\omega$-jump of any set, that is, the assertion that for every $X$, there is a set $Y$ such that $Y=X^{(\omega)}$. For other principles, including induction and choice schemes, see~\cite{MR2517689}.
\section{Frame definability in second-order arithmetic}
In this section, we consider frame definability over $\mathrm{RCA}_0$.  As usual, we identify finite sequences of symbols of the language for modal logic with natural numbers. Let $\omega=\{0, 1, \dots\}$ be the set of standard natural numbers and let $\mathbb{N}$ be the first-order part of the model of $\mathrm{RCA}_0$ under consideration. For the formalization of modal logic over $\mathrm{RCA}_0$, we refer to~\cite{10.1007/978-3-031-95908-0_32}.

Let $\mathrm{Prop}=\{p,q,r,\dots\}$ be an infinite set of propositional variables. In $\mathrm{RCA}_0$, the set of all atomic formulas, $\mathrm{Atm}=\mathrm{Prop}\cup\{\bot\}$, and the set $\mathrm{Fml}$ of modal formulas built from a single modal operator $\Box$ is represented as a set of natural numbers. We regard $\Diamond\varphi$ as an abbreviation for $\neg\Box\neg\varphi$.

\begin{dfn}[$\mathrm{RCA}_0$~\cite{10.1007/978-3-031-95908-0_32}]
Let $p_0, p_1, \dots$ be an enumeration of $\mathrm{Atm}$.
We define the \emph{degree function} $\mathrm{deg} : \mathrm{Fml}\rightarrow\mathbb{N}$ by primitive recursion:
\begin{itemize}
    \item $\mathrm{deg}(p_n)=0$ for all $n$,
    \item $\mathrm{deg}(\bot)=0$,
    \item $\mathrm{deg}(\varphi\rightarrow\psi)=\max\{\mathrm{deg}(\varphi), \mathrm{deg}(\psi)\}$,
    \item $\mathrm{deg}(\Box\varphi)=1+\mathrm{deg}(\varphi)$.
\end{itemize}
\end{dfn}

\begin{dfn}[$\mathrm{RCA}_0$]
Let $n\in\mathbb{N}$ and $\varphi\in\mathrm{Fml}$. Then we define $\Box^{n}\varphi$ by the following:
\begin{itemize}
    \item $\Box^{0}\varphi\equiv\varphi$,
    \item $\Box^{n+1}\varphi\equiv\Box(\Box^{n}\varphi)$.
\end{itemize}
Similarly, we define $\Diamond^{n}\varphi$. 
\end{dfn}

We formalize the provability predicate for normal modal logic.

\begin{dfn}[$\mathrm{RCA}_0$~\cite{10.1007/978-3-031-95908-0_32}]
Let $\Gamma\subset\mathrm{Fml}$. We define the following predicates:
\begin{equation*}
\begin{split}
\mathrm{Prf}_{\mathbf{K}}(\Gamma, p)\equiv p&\in\mathrm{Seq}\land\forall k(k<lh(p)\rightarrow p(k)\in\mathrm{Fml})\\&\land\forall k\Bigl(k<lh(p)\rightarrow\Bigl(p(k)\in\Gamma\lor p(k)\in\mathrm{Axm}\\&\lor(\exists i<k\exists j<k)(p(i)=p(j)\rightarrow p(k))\lor(\exists i<k)(p(k)=\Box p(i))\\&\lor(\exists i<k)(\exists\sigma:\{q<p(i)\mid q\in\mathrm{Prop}\}\rightarrow\{\varphi<p(k)\mid\varphi\in\mathrm{Fml}\})\\&(p(k)=\overline{\sigma}(p(i)))\Bigr)\Bigr),
\end{split}
\end{equation*}
\begin{equation*}
\begin{split}
\mathrm{Pbl}_{\mathbf{K}}(\Gamma, \varphi)\equiv&\exists\psi_1,\dots,\exists\psi_n\in\Gamma\exists p\\&\bigl(\mathrm{Prf}_{\mathbf{K}}(\emptyset, p)\land(\exists i<lh(p))(p(i)=(\psi_1\land\dots\land\psi_n\rightarrow\varphi))\bigr), 
\end{split}
\end{equation*}
\end{dfn}
where $\overline{\sigma}$ is the uniform substitution for a finite function $\sigma:\subseteq\mathrm{Prop}\rightarrow\mathrm{Fml}$ (see~\cite{10.1007/978-3-031-95908-0_32}). 

\subsection{The Valuation Extension Lemma}

We next formalize the Kripke semantics.
\begin{dfn}[$\mathrm{RCA}_0$~\cite{10.1007/978-3-031-95908-0_32}]
A \emph{Kripke model} is a tuple $M=(W, R, V)$ satisfying the following conditions:
\begin{enumerate}
    \item $W\subseteq\mathbb{N}$ is a non-empty set,
    \item $R$ is a binary relation on $W$, i.e., $R\subseteq W\times W$,
    \item %$V$ is a function which assigns a truth value to each pair of a propositional formula and an element of $W$, i.e., 
$V : W\times\mathrm{Fml} \rightarrow\{0, 1\}$,
    \item $V(w, \bot)=0$ for any $w\in W$, 
    \item $V(w, \varphi\rightarrow\psi)=1-V(w, \varphi)(1-V(w, \psi))$ for any $w\in W$ and any $\varphi, \psi\in\mathrm{Fml}$, 
    \item $V(w, \Box\varphi)=1\iff\forall v\in W(wRv\rightarrow V(v, \varphi)=1)$ for any $w\in W$ and any $\varphi\in\mathrm{Fml}$.
\end{enumerate}
A pair $(W, R)$ is called a \emph{frame}, and a function $V$ is called a \emph{valuation} on $(W, R)$. A frame $F=(W, R)$ is \emph{image-finite} if for all $w\in W$, $wR=\{v\in W\mid wRv\}$ is a finite set.
\vspace{1mm}

For each $n$, let $\mathrm{Fml}[n]$ be the set of all modal formulas $\varphi$ such that $\mathrm{deg}(\varphi)\leq n$. We say that a valuation $V$ is \emph{$n$-restricted} if its domain is $W \times \mathrm{Fml}[n]$.
\end{dfn}
\begin{comment}
In general, given a frame $F=(W, R)$ and a function $v: W\times\mathrm{Prop}\rightarrow2$, a valuation $V$ on $F$ that extends $v$ is unique, if it exists. This fact can be shown by induction on the complexity of formulas.

\begin{prop}[$\mathrm{RCA}_0$]
Let $F=(W, R)$ be a frame and $v:W\times\mathrm{Prop}\rightarrow2$. Then the valuation $V$ on $F$ extending $v$ is unique; that is, the following holds: Let $V_1$ and $V_2$ be valuations. If $V_1$ and $V_2$ are extensions of $v$ respectively, then $V_1(w, \varphi)=V_2(w, \varphi)$ for all $w\in W$ and $\varphi\in\mathrm{Fml}$.
\end{prop}

\begin{proof}
This follows by $\Sigma^0_1$-induction.
\end{proof}
\end{comment}

\begin{comment}
\begin{dfn}[$\mathrm{RCA}_0$]
Let $F=(W, R)$ be a frame.
\begin{itemize}
    \item $F$ is \emph{appropriate to $T$} if $R$ is reflexive,
    \item $F$ is \emph{appropriate to $B$} if $R$ is symmetric,
    \item $F$ is \emph{appropriate to $4$} if $R$ is transitive,
    \item $F$ is \emph{appropriate to $5$} if $R$ is Euclidean,
    \item $F$ is \emph{appropriate to $D$} if $R$ is serial,
    \item $F$ is \emph{appropriate to $.2$} if $R$ is directed,
    \item $F$ is \emph{appropriate to $L$} if $R$ is transitive and $W$ has no infinite ascending sequences by $R$.
\end{itemize}

In general, $F$ is \emph{appropriate to $\mathrm{\Sigma}\subseteq\{T, B, 4, 5, D, .2, L\}$} if for all $\varphi\in\mathrm{\Sigma}$, $F$ is appropriate to $\varphi$.
\end{dfn}
\end{comment}

We also use the following notation.
Let $F$ be a frame, let $M$ be a model, let $w\in W$, and let $\varphi\in\mathrm{Fml}$. We define
\begin{align*}
&M, w\Vdash\varphi\equiv V(w, \varphi)=1, \quad \quad M\Vdash\varphi\equiv(\forall w\in W)(V(w, \varphi)=1),\\
&F\Vdash\varphi\equiv(\forall V\text{: valuation on $F$})(\forall w\in W)(V(w, \varphi)=1).
\end{align*}

Given a frame $F$, a set of formulas $\mathrm{\Gamma}$, and $\varphi\in\mathrm{Fml}$, we let
\begin{align*}
F\Vdash\mathrm{\Gamma}&\equiv(\forall\psi\in\mathrm{\Gamma})(F\Vdash\psi),\\
\mathrm{\Gamma}\Vdash\varphi&\equiv(\forall F\text{: frame and }\forall V\text{: valuation on }F)((F, V)\Vdash\mathrm{\Gamma}\rightarrow (F, V)\Vdash\varphi).
\end{align*}

In the above definition, the valuation $V$ should assign truth values to all formulas. Within $\mathrm{RCA}_{0}$, one cannot extend an assignment for atomic formulas to a full valuation.

\begin{thm}[The Valuation Extension Lemma, $\mathrm{ACA}^{+}_{0}$]\label{pextend}
For each frame $F=(W, R)$ and $v:W\times\mathrm{Prop}\rightarrow2$, there exists a valuation $V$ on $(W, R)$ such that $V$ is an extension of $v$.
\end{thm}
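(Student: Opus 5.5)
The plan is to build the valuation in stages indexed by modal degree, and then to assemble the stages using the $\omega$-jump. Let $X=W\oplus R\oplus v$. For each $n\in\mathbb{N}$ we want the $n$-restricted valuation $V_n$ on $W\times\mathrm{Fml}[n]$ that extends $v$. We need a uniform bound on its complexity, say that $V_n$ is $\Delta^0_{n+1}$ in $X$, or more precisely computable from $X^{(n)}$ by an index that depends primitive recursively on $n$.

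The key observation is this. Suppose $V_n$ is given as a set. Then $V_{n+1}$ is computable from $V_n'$ (the jump of $V_n$, or even $(X\oplus V_n)'$). The reason is that a formula of degree $\le n+1$ is a Boolean combination of atoms and formulas $\Box\psi$ with $\mathrm{deg}(\psi)\le n$. The value $V_{n+1}(w,\Box\psi)=1$ holds iff $\forall u\in W\,(wRu\rightarrow V_n(u,\psi)=1)$, which is a $\Pi^0_1(X\oplus V_n)$ condition. The Boolean clauses are then handled by a primitive recursive evaluation on the formula tree, with the $\Box$-subformulas of maximal nesting as leaves. So we get a fixed Turing functional $\Phi$ such that $V_{n+1}=\Phi((X\oplus V_n)')$. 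Iterating, we get a uniform index $e_n$ with $V_n=\Phi_{e_n}(X^{(n)})$, and a truth-table-style translation shows $X^{(n+1)}$ computes $(X\oplus V_n)'$ uniformly.

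Now apply $\mathrm{ACA}^{+}_0$ to obtain $Y=X^{(\omega)}$, whose $n$-th column is $X^{(n)}$. Define
\[
V(w,\varphi)=\Phi_{e_{\mathrm{deg}(\varphi)}}\bigl(Y_{\mathrm{deg}(\varphi)}\bigr)(w,\varphi),
\]
which is $\Delta^0_1$ in $Y$, so $V$ exists by $\Delta^0_1$-comprehension. It remains to verify clauses 3--6 of the definition of Kripke model, together with $V\restriction(W\times\mathrm{Prop})=v$. To do this we prove by $\Pi^0_1$ (in fact $\Delta^0_0$-in-$Y$) induction on $n$ the following: $\Phi_{e_n}(Y_n)$ is total on $W\times\mathrm{Fml}[n]$, it is an $n$-restricted valuation, and it agrees with $\Phi_{e_{n-1}}(Y_{n-1})$ on $\mathrm{Fml}[n-1]$. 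The $\Box$ clause then holds because the $\Pi^0_1$ fact about $V_n$ is correctly decided by the jump column $Y_{n+1}$, which is part of what it means for $Y$ to be the $\omega$-jump.

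The main obstacle is formalizing the uniformity inside $\mathrm{RCA}_0$ plus the existence of $Y$. We must make sure the induction hypothesis is expressible with bounded complexity relative to $Y$, so that the available induction applies. My first attempt is to phrase ``$\Phi_{e_n}(Y_n)$ is a total $n$-restricted valuation'' as an arithmetical-in-$Y$ statement. Then I would note that, because $Y$ encodes all the jumps, totality and the $\Box$-correctness become $\Delta^0_1(Y)$ facts about the columns. At that point ordinary $\Sigma^0_1$-induction relative to $Y$ suffices. If this is awkward, the fallback is to take $Y$ to be $X^{(\omega)}$ and use $Y'$ instead; $\mathrm{ACA}^+_0$ also proves $Y'$ exists, and with it arithmetical induction over the columns is available.
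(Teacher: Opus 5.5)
Your proposal is correct and follows essentially the paper's route. Like the paper, you build the $n$-restricted valuations $V_n$ by modal degree, each computable from $(F\oplus v)^{(n)}$ via an index obtained uniformly in $n$, and then glue them together using the $\omega$-jump. Your worry about the complexity of the induction (and the fallback to $Y'$) is unnecessary: $\mathrm{ACA}^{+}_0$ contains $\mathrm{ACA}_0$, so it already proves the full arithmetical induction scheme, and that is exactly what the paper invokes.
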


\begin{proof}
We first give a proof from the perspective of computability theory. Afterward, we verify that these arguments can be formalized within $\mathrm{ACA}^{+}_0$.
\vspace{1mm}

\begin{lemproof}[\textbf{Proof via computability theory}:]
Let $F=(W, R)$ be a frame and $v: W\times\mathrm{Prop}\rightarrow2$. We construct a valuation $V: W \times \mathrm{Fml} \to 2$ extending $v$ in a bottom-up manner, using $(F\oplus v)^{(\omega)}$ as an oracle.
\vspace{1mm}

Step $0$: We define $V_{0}:W\times\mathrm{Fml}[0]\rightarrow2$ as follows: for all $w\in W$ and $p\in\mathrm{Prop}$, $V_0(w, p)=v(w, p)$, and for all $w\in W$ and $\varphi, \psi\in\mathrm{Fml}[0]$ for which the values of $V_{0}$ have already been defined, $V_{0}(w, \varphi\rightarrow\psi)=1-V_0(w, \varphi)(1-V_0(w, \psi))$. Then, we search for an index $e_{0}$ of $V_{0}$ using $F\oplus v$. From this point forward, we only need to specify the clauses for $\Box$.
\vspace{1mm}

Step $k$: We assume that $V_{k-1}: W \times \mathrm{Fml}[k-1] \to 2$ is defined and $e_{k-1}$ is an index of $V_{k-1}$. Fix $w\in W$ and $\varphi\in\mathrm{Fml}[k-1]$. Then, using $(F \oplus v)^{(k)}$ as an oracle, we determine whether $\forall u \in W (wRu \rightarrow V_{k-1}(u, \varphi))$ holds. If $\forall u \in W (wRu \rightarrow V_{k-1}(u, \varphi))$ holds, then $V_{k}(w, \Box\varphi)=1$; otherwise, $V_{k}(w, \Box\varphi)=0$. Using $(F \oplus v)^{(k)}$ as an oracle, we compute an index $e_{k}$ for the $k$-restricted valuation $V_{k} : W \times \mathrm{Fml}[k] \to 2$ defined in this manner.
\vspace{1mm}

In this construction, we obtain a sequence of indices $E=\{e_{k}\}_{k}$. Thus, using this $E$, we define the valuation $V$ such that $V(w, \varphi) = V_{k}(w, \varphi)$ for all $w \in W$ and $\varphi \in \mathrm{Fml}$ with $\mathrm{deg}(\varphi) = k$. 
\end{lemproof}
\vspace{1mm}

We now verify that the above argument can be formalized in $\mathrm{ACA}^{+}_0$. In $\mathrm{ACA}_{0}$, we can define certain concepts from computability theory involving Turing jumps (see Simpson \cite{MR2517689} for details).
Let $X, Y\subseteq\mathbb{N}$. Then, $\mathrm{TJ}(X)$ denotes the Turing jump of $X$ as in~\cite{MR2517689}. If $Y\leq_{T}\mathrm{TJ}(X)$, we can consider a $\Delta^{0, X}_2$-index of $Y$ (obtained as a $\mathrm{TJ}(X)$-recursive index of $Y$). Moreover, the statement ``$e$ is a $\mathrm{TJ}(X)$-recursive index of a set'' is arithmetical, and in such cases, we write $\Delta^{0, X}_2[e]$ for the set indexed by $e$. 
%\vspace{1mm}

By $\mathrm{ACA}^{+}_0$, we have $(F\oplus v)^{(\omega)}$. At each step $k$, we need to find a $\Delta^{0, (F\oplus v)^{(k-1)}}_2$-index for the $k$-restricted valuation $V_{k}$. This is possible by using $(F\oplus v)^{(k-1)}$ as an oracle (that is, by using $(F\oplus v)^{(\omega)}$). In particular, these arguments can be carried out within $\mathrm{ACA}_0$. Furthermore, using arithmetical comprehension (and arithmetical induction), we obtain a sequence of indices $E$. This completes the proof.
\end{proof}

\begin{cor}[The Restricted Valuation Extension Lemma, $\mathrm{ACA}^{\prime}_{0}$]\label{pextend2}
For each frame $F=(W, R)$, $v:W\times\mathrm{Prop}\rightarrow2$, and $k\in\mathbb{N}$, there exists a $k$-restricted valuation $V$ on $(W, R)$ such that $V$ is an extension of $v$.
\end{cor}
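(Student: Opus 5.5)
We follow the proof of Theorem~\ref{pextend}, but stop after finitely many stages, so that only $(F\oplus v)^{(k)}$ is needed rather than the $\omega$-jump. Fix a frame $F=(W,R)$, an assignment $v$, and $k\in\mathbb{N}$. By $\mathrm{ACA}^{\prime}_0$ the set $Y=(F\oplus v)^{(k)}$ exists, together with all its initial jumps $(F\oplus v)^{(j)}$ for $j\le k$, since these are uniformly recursive in $Y$. Every object we build will be $\Delta^0_1$-definable from $Y$, so it exists by $\Delta^0_1$-comprehension relative to $Y$.

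For $j\le k$ we define $V_j: W\times\mathrm{Fml}[j]\to 2$. The map $V_0$ extends $v$ by the truth-table clauses along the formula-construction order, so it is recursive in $F\oplus v$. Suppose $V_{j}$ is recursive in $(F\oplus v)^{(j)}$ via an index $e_j$. Then for a formula of degree $j+1$ we set
\begin{equation*}
V_{j+1}(w,\Box\varphi)=1 \iff \forall u\,(u\in W\land wRu\rightarrow V_j(u,\varphi)=1) \qquad (\varphi\in\mathrm{Fml}[j]).
\end{equation*}
The right-hand side is a $\Pi^0_1$ condition relative to $(F\oplus v)^{(j)}$, so it is decided by $(F\oplus v)^{(j+1)}$. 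We extend $V_{j+1}$ to the rest of $\mathrm{Fml}[j+1]$ by the Boolean clause, with $\bot$ mapped to $0$ and lower-degree formulas agreeing with $V_j$. This gives an index $e_{j+1}$ of $V_{j+1}$ relative to $(F\oplus v)^{(j+1)}$.

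The index map $j\mapsto e_j$ is primitive recursive, since each $e_{j+1}$ is obtained uniformly from $e_j$ by the s-m-n theorem. Hence the final index $e_k$ relative to $Y$ exists in $\mathrm{RCA}_0$, and $V=V_k$ is obtained by $\Delta^0_1$-comprehension in $Y$.

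The main obstacle is verifying, inside the weak theory and for nonstandard $k$, that the set obtained really is a $k$-restricted valuation extending $v$. Totality and $\{0,1\}$-valuedness are unproblematic. The clauses, in particular the $\Box$ clause, must be proved by induction on $j\le k$ for the statement ``$e_j$ computes from $(F\oplus v)^{(j)}$ a total function satisfying the clauses on $W\times\mathrm{Fml}[j]$''. This statement is not $\Sigma^0_1$ in general.

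To handle this, we phrase everything relative to $Y$. The statement for each $j\le k$ is arithmetical in $Y$ with fixed quantifier complexity, namely a $\Pi^0_2$ totality condition plus a $\Pi^0_1$ clause check, uniformly in $j$. We then need induction for such formulas relative to the single set $Y$. Since $\mathrm{ACA}^{\prime}_0$ proves the existence of $Y^{(2)}$ (indeed of $(F\oplus v)^{(k+2)}$), this arithmetical statement about $Y$ becomes $\Delta^0_1$ in a set that exists, and $\Sigma^0_1$-induction relative to that set suffices.

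Alternatively, and more simply, we can define $V$ directly as the set $\{(w,\varphi,i): \text{the }Y\text{-computation of }V_{\deg\varphi}(w,\varphi)\text{ outputs } i\}$. We then check the valuation clauses pointwise, since each clause relates only two consecutive levels, and this check uses the defining property of the jump hierarchy.
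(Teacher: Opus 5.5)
Your proposal is correct and takes the paper's route: the paper proves this corollary by running the stage-by-stage construction of Theorem~\ref{pextend} only up to stage $k$, with $(F\oplus v)^{(k)}$ as the oracle in place of $(F\oplus v)^{(\omega)}$, which is what you do. Your extra care with the induction over $j\le k$ is sound but not needed: $\mathrm{ACA}^{\prime}_0$ contains $\mathrm{ACA}_0$, which already proves induction for all arithmetical formulas.
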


\begin{proof}
By Theorem~\ref{pextend}, it is sufficient to use $(F\oplus v)^{k}$ as an oracle to construct a $k$-restricted valuation.
\end{proof}

On the other hand, if $k$ is standard, a $k$-restricted valuation can be constructed within $\mathrm{ACA}_0$.

\begin{cor}[$\mathrm{ACA}_{0}$]\label{pxtendstandard}
For each frame $F=(W, R)$, $v:W\times\mathrm{Prop}\rightarrow2$, and $k\in\omega$, there exists a $k$-restricted valuation $V$ on $(W, R)$ such that $V$ is an extension of $v$.
\end{cor}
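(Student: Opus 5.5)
The plan is to argue by external (metatheoretic) induction on the standard number $k\in\omega$. This yields, for each fixed $k$, a separate proof in $\mathrm{ACA}_0$ of the statement for that $k$. In $\mathrm{ACA}_0$ we have the Turing jump of any set but not necessarily the $\omega$-jump. So the whole point is to avoid quantifying internally over the sequence of jumps. Instead, for a fixed standard $k$, we write down an explicit arithmetical formula $\theta_k(w,\varphi,F,v)$ of fixed complexity (roughly $\Sigma^0_{k+1}$ or $\Pi^0_{k}$ in the parameters $F\oplus v$). This formula will define the intended value $V(w,\varphi)=1$ for $\varphi\in\mathrm{Fml}[k]$, and we then apply arithmetical comprehension once to obtain the set.

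\textbf{Construction of $\theta_k$.} For $k=0$, the set $V_0$ on $W\times\mathrm{Fml}[0]$ is $\Delta^0_1$ in $F\oplus v$. Truth of a Boolean combination of atoms is computed by primitive recursion on the formula tree from $v$. Hence $V_0$ exists already in $\mathrm{RCA}_0$; it is the Step $0$ of the proof of Theorem~\ref{pextend}.

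Given the $(k-1)$-restricted valuation $V_{k-1}$ as a set parameter, we define $V_k$ on $\mathrm{Fml}[k]$ in two stages. First, for $\Box\varphi$ with $\varphi\in\mathrm{Fml}[k-1]$, set $V_k(w,\Box\varphi)=1$ iff $\forall u\in W(wRu\rightarrow V_{k-1}(u,\varphi)=1)$. This is a $\Pi^0_1$ condition in $F\oplus V_{k-1}$, so it exists by one application of arithmetical comprehension (it is recursive in $\mathrm{TJ}(F\oplus V_{k-1})$). Second, we close under $\rightarrow$. A formula in $\mathrm{Fml}[k]$ is a Boolean combination of atoms and of boxed formulas $\Box\varphi$ with $\varphi\in\mathrm{Fml}[k-1]$. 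Its value is therefore computed by recursion on its Boolean structure from the set just obtained, which is again $\Delta^0_1$-definable.

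Iterating this $k$ times (a standard, finite number of times) gives $V_k$. It is arithmetically definable from $F\oplus v$ with quantifier depth about $k$, so it exists in $\mathrm{ACA}_0$. Equivalently, we may form $\mathrm{TJ}^{k}(F\oplus v)$ by $k$ applications of the jump and compute $V_k$ from it, as in Corollary~\ref{pextend2}.

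\textbf{Verification.} We check the Kripke clauses on $W\times\mathrm{Fml}[k]$ and that $V$ extends $v$. The $\bot$ clause, the $\rightarrow$ clause and agreement with $v$ hold by construction. The $\Box$ clause holds for $\Box\varphi\in\mathrm{Fml}[k]$ because $\varphi\in\mathrm{Fml}[k-1]$. We must also ensure that $V_k$ agrees with $V_{k-1}$ on $\mathrm{Fml}[k-1]$, so that the box clause refers to the right values. This follows by an internal induction on formula length, restricted to an arithmetical (indeed $\Delta^0_1$ in the given sets) statement, which is available in $\mathrm{ACA}_0$.

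\textbf{Main obstacle.} The delicate point is to stress that the induction on $k$ is external. We cannot prove in $\mathrm{ACA}_0$ the universally quantified statement $\forall k\in\mathbb{N}$, since that is Corollary~\ref{pextend2} and needs $\mathrm{ACA}'_0$. So the result must be read as a schema: one theorem of $\mathrm{ACA}_0$ for each $k\in\omega$.
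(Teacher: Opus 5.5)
Your proposal is correct and takes essentially the paper's route. For each fixed standard $k$, the $k$-restricted valuation is obtained from finitely many jumps of $F\oplus v$; the paper phrases this as computing it relative to $(F\oplus v)^{(k)}$, which $\mathrm{ACA}_0$ supplies for standard $k$. Your schema reading and your check that $V_k$ agrees with $V_{k-1}$ on $\mathrm{Fml}[k-1]$ spell out details the paper leaves implicit.
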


\begin{proof}
It is enough to consider the case where $k=2$. By Theorem~\ref{pextend}, a $2$-restricted valuation can be constructed relative to $(F \oplus v)^{\prime\prime}$. Furthermore, the existence of $(F \oplus v)^{\prime\prime}$ is guaranteed by arithmetical comprehension.
\end{proof}

Next, we consider the logical strength of VEL. In fact, over $\mathrm{RCA}_0$, VEL implies $\mathrm{ACA}_0$. To show this, we use the following proposition.

\begin{prop}[$\mathrm{RCA}_0$~\cite{MR2517689}]\label{acaeqijf}
$\mathrm{ACA}_0$ is equivalent to the statement that for each injective function $f:\mathbb{N}\rightarrow\mathbb{N}$, there exists a set $A$ such that $\forall x(x\in A\leftrightarrow\exists y(f(y)=x))$.
\end{prop}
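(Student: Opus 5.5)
The statement is the standard characterization of $\mathrm{ACA}_0$ by the existence of ranges of injections (Simpson, Lemma III.1.3). I would prove the two directions separately.

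\emph{ACA$_0$ implies range existence.} Assume $\mathrm{ACA}_0$ and let $f:\mathbb{N}\rightarrow\mathbb{N}$ be injective. The formula $\exists y(f(y)=x)$ is $\Sigma^0_1$ with parameter $f$, hence arithmetical. Arithmetical comprehension therefore gives the required set $A$ directly.

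\emph{Range existence implies ACA$_0$.} Work in $\mathrm{RCA}_0$ and assume that every injection has a range.

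First I reduce arithmetical comprehension to $\Sigma^0_1$ comprehension, by a metatheoretic induction on the quantifier complexity of the formula. Suppose $\Sigma^0_1$ comprehension is available with arbitrary set parameters.
\begin{enumerate}
\item A $\Pi^0_1$ formula is handled by taking the complement of a $\Sigma^0_1$-defined set, which $\Delta^0_1$ comprehension permits.
\item For a $\Sigma^0_{n+1}$ formula $\exists y\,\psi(x,y)$ with $\psi\in\Pi^0_n$, the induction hypothesis gives $B=\{(x,y)\mid \psi(x,y)\}$.
\item The original formula is then equivalent to $\exists y((x,y)\in B)$, which is $\Sigma^0_1$ in the parameter $B$, so one more application of $\Sigma^0_1$ comprehension finishes.
\end{enumerate}
Since every arithmetical formula is $\Sigma^0_n$ or $\Pi^0_n$ for some standard $n$, this yields all arithmetical comprehension.

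Next I derive $\Sigma^0_1$ comprehension from range existence. By the normal form theorem in $\mathrm{RCA}_0$, a $\Sigma^0_1$ formula can be written $\exists y\,\theta(x,y)$ with $\theta$ a $\Sigma^0_0$ formula, possibly with set parameters. Using the pairing function, I define
\begin{equation*}
f((x,y))=\begin{cases}2x & \text{if } \theta(x,y)\land(\forall z<y)\neg\theta(x,z),\\ 2(x,y)+1 & \text{otherwise.}\end{cases}
\end{equation*}
This $f$ is given by a bounded definition, so it exists by $\Delta^0_1$ comprehension.

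$f$ is injective. Each $x$ has at most one least witness $y$, so even values are never repeated. Odd values are distinct because pairing is injective. Even and odd values never coincide.

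Now let $A$ be the range of $f$. Then $S=\{x\mid 2x\in A\}$ exists by $\Delta^0_1$ comprehension. Moreover $x\in S\leftrightarrow\exists y\,\theta(x,y)$: if some witness exists, the least one exists by $\Sigma^0_0$ induction (least number principle), and it sends $(x,y)$ to $2x$.

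The only place needing care is the metatheoretic induction in the first step. One must check that the set parameters introduced at each level stay admissible, so that $\Sigma^0_1$ comprehension with parameters really does iterate to cover every standard level. This is routine once the parameterized form is used, which the argument above provides.
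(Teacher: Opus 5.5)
Your argument is correct and is essentially the standard proof in Simpson (Lemmas III.1.2 and III.1.3), which the paper cites rather than reproving: you reduce arithmetical comprehension to $\Sigma^0_1$ comprehension by induction on quantifier depth, then obtain a $\Sigma^0_1$-definable set from the range of an injection that sends least witnesses to even numbers and everything else to odd numbers. One cosmetic point: if your pairing map is not surjective (e.g.\ Simpson's $(i,j)=(i+j)^2+i$), also set $f(m)=2m+1$ for every $m$ that is not a pair code, so that $f$ is total on $\mathbb{N}$.
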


\begin{prop}[$\mathrm{RCA}_0$]\label{velraca}
VEL implies $\mathrm{ACA}_0$.
\end{prop}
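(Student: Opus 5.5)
We reduce to Proposition~\ref{acaeqijf}. Fix an injective function $f:\mathbb{N}\rightarrow\mathbb{N}$. We want a set $A$ with $x\in A\leftrightarrow\exists y(f(y)=x)$. The idea is to build a computable frame in which the range of $f$ is expressed by a single $\Diamond$. We then read $A$ off from a valuation supplied by VEL.

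\textbf{The frame.} Let $W=\{a_x\mid x\in\mathbb{N}\}\cup\{b_y\mid y\in\mathbb{N}\}$, coded for instance by $a_x=2x$ and $b_y=2y+1$. Let $R=\{(a_x,b_y)\mid f(y)=x\}$. Both $W$ and $R$ exist by $\Delta^0_1$-comprehension, since $R$ is decidable from $f$. Injectivity of $f$ is not really needed here, though it makes each $a_x$ have at most one successor. Define $v:W\times\mathrm{Prop}\rightarrow2$ by $v(w,p)=1$ for all $w$ and $p$; this is also $\Delta^0_1$. Apply VEL to get a valuation $V$ on $(W,R)$ extending $v$.

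\textbf{Reading off $A$.} Consider the formula $\Diamond p\equiv\neg\Box\neg p$, that is, $(\Box(p\rightarrow\bot))\rightarrow\bot$. Set $A=\{x\mid V(a_x,\Diamond p)=1\}$, which exists by $\Delta^0_1$-comprehension relative to $V$. To verify that $A$ is the range, unwind the valuation clauses, each of which is a finite check:
\begin{itemize}
\item $V(w,p\rightarrow\bot)=1-V(w,p)=0$ for every $w$.
\item By the $\Box$ clause, $V(a_x,\Box(p\rightarrow\bot))=1$ iff $a_x$ has no $R$-successor, i.e.\ iff $\neg\exists y\,(f(y)=x)$.
\item Negating once more, $V(a_x,\Diamond p)=1\leftrightarrow\exists y\,(f(y)=x)$.
\end{itemize}
Proposition~\ref{acaeqijf} then yields $\mathrm{ACA}_0$.

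\textbf{Main point of care.} The only step needing attention is that the argument uses only finitely many fixed formulas: $p$, $p\rightarrow\bot$, $\Box(p\rightarrow\bot)$, and $\Diamond p$. Hence no induction on formulas is required, and each unwinding is a direct use of clauses 4--6 of the definition of a Kripke model, valid in $\mathrm{RCA}_0$. One must also ensure that the abbreviation $\Diamond$ is decoded correctly via the clause for $\rightarrow$ and $\bot$. This is routine.
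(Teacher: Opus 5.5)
Your proof is correct, but it takes a different route from the paper's proof of this proposition.

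The paper keeps the frame trivial and puts the coding into the assignment. It takes $W=\mathbb{N}$ with the universal relation $R=W\times W$, sets $v(w,p_i)=1\Leftrightarrow f(w)=i$, and reads off the range as $\{n\mid V(n,\Diamond p_n)=1\}$, using a different propositional variable for each $n$.

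You do the opposite. Your assignment is constant and the coding lives in the accessibility relation, so a single formula $\Diamond p$ suffices. Replacing $\Diamond p$ by $\Diamond\top$, your argument uses only a $1$-restricted valuation and places no constraint on the assignment. That is exactly the paper's proof of the stronger Proposition~\ref{wwwvelaca}, which uses the frame $R=\{\langle n,m\rangle\mid f(m)=n\}$ and the formula $\Diamond\top$. For injective $f$ your frame is also image-finite, since each $a_x$ has at most one successor, so the same argument covers the image-finite case.

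What the paper's choice buys is a fixed, computable, serial frame, which shows that the assignment alone can force complexity. It also lets the paper reuse the construction verbatim for Restricted VEL for $\mathbf{CTL}$, where frames must be serial. Your frame has dead ends (every $b_y$, and every $a_x$ outside the range of $f$), so it would not transfer directly to that setting.
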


\begin{proof}
By Proposition~\ref{acaeqijf}, it suffices to show that for an injective function $f$, there exists a range set of $f$. We take an injective function $f:\mathbb{N}\rightarrow\mathbb{N}$. Let $\mathrm{Prop}=\{p_{i}\}_{i\in\mathbb{N}}$, $W=\mathbb{N}$ and $R=W\times W$. Then we define $v:W\times\mathrm{Prop}\rightarrow2$ by the following: for all $p_{i}\in\mathrm{Prop}$ and $w\in W$, $v(w, p_{i})=1\Leftrightarrow f(w)=i$. By assumption, there exists the full valuation $V: W\times\mathrm{Fml}\rightarrow2$ on $(W, R)$ such that $V$ is an extension of $v$. Then we define $I=\{w\in W\mid V(w, \Diamond p_{w})=1\}$. $I$ is the range of $f$. In fact, for all $n$, $\exists m(f(m)=n)\Leftrightarrow\exists m(v(m, p_{n})=1)\Leftrightarrow V(n, \Diamond p_{n})=1\Leftrightarrow n\in I$.
\end{proof}

In fact, regardless of the assignment of truth values to propositional variables, the complexity of a valuation on a frame may increase solely because of the frame. 

\begin{prop}[$\mathrm{RCA}_0$]\label{wwwvelaca}
The following statement implies $\mathrm{ACA}_0$: For each frame $F=(W, R)$, there exists a $1$-restricted valuation $V$ on $(W, R)$.
\end{prop}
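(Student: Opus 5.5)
We will again use Proposition~\ref{acaeqijf}: given an injective $f:\mathbb{N}\rightarrow\mathbb{N}$, it suffices to produce the range of $f$ as a set. In Proposition~\ref{velraca} the information about $f$ was carried by the assignment $v$ on a complete frame. Here $v$ is not under our control, since the statement only asserts that \emph{some} $1$-restricted valuation exists. So we must encode $f$ into the frame $F$ itself, in such a way that the truth value of a fixed formula of degree $1$ is independent of $v$. The natural choice is $\Diamond\top$ (that is, $\neg\Box\bot$), whose value at $w$ is $1$ iff $w$ has an $R$-successor. This does not depend on the atomic assignment at all.

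\textbf{Construction.} Let $W=\mathbb{N}$, and code worlds as $\langle 0,n\rangle$ (``target'' worlds) and $\langle 1,m\rangle$ (``witness'' worlds). Let
\[
R=\{(\langle 0,n\rangle,\langle 1,m\rangle)\mid f(m)=n\}.
\]
Since $f$ is a function, $R$ is $\Delta^0_1$-definable from $f$: to decide whether a pair $(x,y)$ is in $R$, decode $x$ and $y$ and check whether $f(m)=n$. Hence $R$ exists in $\mathrm{RCA}_0$. (Injectivity is not even needed for this step, but it costs nothing.) The frame $F=(W,R)$ thus exists.

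\textbf{Extraction.} By the hypothesis there is a $1$-restricted valuation $V$ on $F$. Put
\[
A=\{n\mid V(\langle 0,n\rangle,\neg\Box\bot)=1\},
\]
which exists by $\Delta^0_1$-comprehension relative to $V$. We then verify, using only the valuation clauses for $\bot$, $\rightarrow$ and $\Box$ at degree $\le 1$, that $V(w,\Box\bot)=1$ iff $\forall u(wRu\rightarrow V(u,\bot)=1)$ iff $w$ has no $R$-successor, because $V(u,\bot)=0$ for every $u$. Hence $n\in A$ iff $\exists m\,(\langle 0,n\rangle R\langle 1,m\rangle)$ iff $\exists m\,(f(m)=n)$. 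So $A$ is the range of $f$, and Proposition~\ref{acaeqijf} yields $\mathrm{ACA}_0$.

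The only delicate point is confirming that the formulas used lie in $\mathrm{Fml}[1]$ and that the clause for $\neg$, read as $\varphi\rightarrow\bot$, is available at degree $1$. Both are immediate from the definition of $\mathrm{deg}$, since $\deg(\Box\bot\rightarrow\bot)=1$. One should also note that the $1$-restricted valuation is required to satisfy the $\Box$-clause for formulas $\Box\varphi$ with $\varphi\in\mathrm{Fml}[0]$, and this is exactly what makes $V$ compute the $\Sigma^0_1$ set. We expect no real obstacle. If anything needs care, it is being precise that the definition of an $n$-restricted valuation imposes clauses 4--6 on its restricted domain.
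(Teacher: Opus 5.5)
Your proposal is correct and follows the paper's proof: you encode the range of $f$ into the accessibility relation and read it off from a degree-$1$ formula saying ``has an $R$-successor'' ($\neg\Box\bot$ for you, $\Diamond\top$ in the paper). Splitting the worlds into target and witness worlds is only a cosmetic variant of the paper's frame $W=\mathbb{N}$, $R=\{\langle n,m\rangle\mid f(m)=n\}$.
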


\begin{proof}
By Proposition~\ref{acaeqijf}, it suffices to show that for an injective function $f$, there exists a range set of $f$. We take an injective function $f:\mathbb{N}\rightarrow\mathbb{N}$. Let $W=\mathbb{N}$ and $R=\{\langle{n, m\rangle}\mid f(m)=n\}$. By assumption, there exists a  $1$-restricted valuation $V: W\times\mathrm{Fml}[1]\rightarrow2$. Then we define $I=\{w\in W\mid V(w, \Diamond\top)=1\}$. $I$ is the range of $f$. In fact, for all $n$, $\exists m(f(m)=n)\Leftrightarrow\exists m(nRm)\Leftrightarrow V(n, \Diamond\top)=1\Leftrightarrow n\in I$. 
\end{proof}

Based on these considerations, we investigate the relationship between a frame and a valuation on it from a computability perspective. We say that a frame $F = (W, R)$ is computable if both $W$ and $R \subseteq W \times W$ are computable.

\begin{thm}\label{comframepr}
Let $A$ be a set and $k\in\omega$. Then there exists a $A$-computable frame $F=(W, R)$ such that for any valuation $V:W\times \mathrm{Fml}\rightarrow 2$ on $(W, R)$, $V$ computes $A^{(2k+1)}$.
\end{thm}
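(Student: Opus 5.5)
We build an $A$-computable frame whose modal structure encodes the arithmetical hierarchy above $A$: each additional layer of $\Box$ will absorb one more quantifier alternation. The model is Proposition~\ref{wwwvelaca}, where $\Diamond\top$ at a world decides a $\Sigma^0_1$ question, namely whether some $m$ satisfies $f(m)=n$. We iterate that idea with depth-$(2k+1)$ trees. The formula $\Diamond\top$ gives an existential quantifier over successors. The formula $\Box\Diamond\top$ gives ``every successor has a successor,'' which is a $\forall\exists$ pattern. In general $(\Diamond\Box)^{j}\Diamond\top$ expresses a $\Sigma^0_{2j+1}$-type statement about the tree below a world. Using only $\top$ avoids dependence on $v$, so the conclusion holds for \emph{every} valuation, whatever atomic assignment it extends.

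\textbf{Normal form.} By the standard normal form for $\Sigma^0_{n}$ sets relative to $A$, fix $n=2k+1$ and write $x\in A^{(n)}$ iff $\exists y_1\forall y_2\exists y_3\cdots Q y_{n}\,\theta(x,y_1,\dots,y_n)$ with $\theta$ $A$-computable. We can take the innermost quantifier to be $\exists$ (since $n$ is odd) and absorb it so the matrix has a convenient form.

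\textbf{Frame.} Let $W$ consist of pairs $(x,\sigma)$ with $\sigma$ a finite sequence of length at most $n$. Put $(x,\sigma)R(x,\sigma^\frown y)$ whenever the extension is permitted, where permission is decided $A$-computably from $\theta$ at the last level. Concretely, a node at depth $n-1$ gets a successor $y_n$ only if $\theta(x,y_1,\dots,y_n)$ holds. Then $W$ and $R$ are $A$-computable. We verify by induction on $j$ that the truth of $\Diamond\top$, $\Box\Diamond\top$, $\Diamond\Box\Diamond\top$, and so on, at depth $n-j$ corresponds exactly to the remaining quantifier string.

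\textbf{Universal levels.} A universal quantifier $\forall y$ should be matched by $\Box$ over all successors. The difficulty is that $\Box\psi$ is vacuously true at a node with no successors, while $\Diamond$ correctly fails there. At universal levels we therefore make every node have successors for all $y$, so $\Box$ genuinely ranges over all $y$. At existential levels the children are also all $y$, and the matrix is enforced only at the bottom via the presence or absence of the final edge. With this arrangement the formula $\varphi_n=\Diamond\Box\Diamond\Box\cdots\Diamond\top$, of modal depth $n$ with alternating operators starting and ending with $\Diamond$, satisfies $V((x,\langle\rangle),\varphi_n)=1$ iff $x\in A^{(n)}$. This holds for any valuation $V$, because Kripke clauses (4)--(6) force the values on $\top$-built formulas and do not depend on $v$. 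Hence $A^{(n)}=\{x: V((x,\langle\rangle),\varphi_n)=1\}\le_T V$.

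The main obstacle is checking the quantifier/modality matching at the bottom level, in particular confirming that the parity $2k+1$ makes the final operator a $\Diamond$ so that the base case is the $\Sigma^0_1$ case of Proposition~\ref{wwwvelaca}. The inductive correctness argument is routine external induction on $j\le n$, carried out with $k\in\omega$ standard.
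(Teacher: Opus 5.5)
Your proposal is correct and matches the paper's proof: above each root you build a depth-$(2k+1)$ tree with unconditional edges at every intermediate level, gate only the final existential edge by the $A$-computable matrix, and read $A^{(2k+1)}$ off the $\top$-only formula $\Diamond(\Box\Diamond)^k\top$ at the roots. The only difference is cosmetic. You fix the single $\Sigma^0_{2k+1}$ definition of $A^{(2k+1)}$ directly, whereas the paper uses a computable listing $\theta_e$ of all such formulas with roots $\langle n,e\rangle$; this changes nothing essential.
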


\begin{proof}
Let $A$ be a set and $k\in\omega$. Let $\{\exists n_{1}\forall n_{2}\exists n_{3}\dots\forall n_{2k}\exists n_{2k+1}\theta_{i}(\vec{n}, x, X)\}_{i\in\omega}$ be a computable listing of $\Sigma^{0}_{2k+1}$-formulas with no free variables except a number variable $x$ and a set variable $X$. (The $\theta_{i}$'s are bounded formulas).  Let $W=\omega^{<2k+4}$. Then we define $R\subseteq W\times W$ as follows: for $\sigma, \tau\in W$, 
\begin{align*}
\sigma R\tau\Leftrightarrow\sigma\preceq\tau&\land(lh(\tau)=lh(\sigma)+1)\\
&\land(lh(\sigma)=2k+2\rightarrow\theta_{\sigma_{1}}(\sigma_{2}, \sigma_{3}, \dots, \sigma_{lh(\sigma)-1}, \tau_{lh(\tau)-1}, \sigma_{0}, A)).
\end{align*} 

This gives the frame $(W, R)$. The following diagram illustrates the part of $(W, R)$ reachable from $\langle{n, e\rangle}$ in the case $k=2$. The transition relation $R$ extends from each node to the nodes immediately above it. In this example, $\theta_{e}(n_{1}, n_{2}, \dots, n_{5}, n, A)$ holds.
\begin{center}
\begin{tikzpicture}[
    scale=1,
    node/.style={circle, fill=black, inner sep=1.8pt}, % Slightly enlarge nodes (1.2pt -> 1.8pt)
    label node/.style={font=\normalsize} % Changed from \small to \large
]

% 1. Draw Main Spine Nodes and Labels
\node[node] (n0) at (0, 0) {};
\node[label node, below=2pt] at (n0) {$\langle n, e \rangle$};

\node[node] (n1) at (-1, 1) {};
\node[label node, left=4pt] at (n1) {$\langle n, e, n_{1} \rangle$};

\node[node] (n2) at (-2, 2) {};
\node[label node, left=4pt] at (n2) {$\langle n, e, n_{1}, n_{2} \rangle$};

\node[node] (n3) at (-3, 3) {};
\node[label node, left=4pt] at (n3) {$\langle n, e, n_{1}, n_{2}, n_{3} \rangle$};

\node[node] (n4) at (-4, 4) {};
\node[label node, left=4pt] at (n4) {$\langle n, e, n_{1}, n_{2}, n_{3}, n_{4} \rangle$};

\node[node] (n5) at (-5, 5) {};
\node[label node, left=4pt] at (n5) {$\langle n, e, n_{1}, n_{2}, n_{3}, n_{4}, n_{5} \rangle$};

% Main spine edges
\draw[thick] (n0) -- (n1);
\draw[thick] (n1) -- (n2);
\draw[thick] (n2) -- (n3);
\draw[thick] (n3) -- (n4);
\draw[thick] (n4) -- (n5);

% 2. Draw Right Spines
% Spine from L0
\node[node] (s0_1) at (0.8, 1) {};
\node[node] (s0_2) at (1.6, 2) {};
\node[node] (s0_3) at (2.4, 3) {};
\node[node] (s0_4) at (3.2, 4) {};
\draw[thick] (n0) -- (s0_1) -- (s0_2) -- (s0_3) -- (s0_4);

% Spine from L1
\node[node] (s1_2) at (-0.2, 2) {};
\node[node] (s1_3) at (0.6, 3) {};
\node[node] (s1_4) at (1.4, 4) {};
\draw[thick] (n1) -- (s1_2) -- (s1_3) -- (s1_4);

% Spine from L2
\node[node] (s2_3) at (-1.2, 3) {};
\node[node] (s2_4) at (-0.4, 4) {};
\draw[thick] (n2) -- (s2_3) -- (s2_4);

% Spine from L3
\node[node] (s3_4) at (-2.2, 4) {};
\draw[thick] (n3) -- (s3_4);

% 3. Draw Level 5 Forks
% Fork from main L4 node (-4, 4)
\node[node] (f_m_1) at (-3.8, 5) {};
\node[node] (f_m_2) at (-3.0, 5) {};
\draw[thick] (n4) -- (f_m_1);
\draw[thick] (n4) -- (f_m_2);

% Fork from s3_4 (-2.2, 4)
\node[node] (f_3_1) at (-2.0, 5) {};
\node[node] (f_3_2) at (-1.2, 5) {};
\draw[thick] (s3_4) -- (f_3_1);
\draw[thick] (s3_4) -- (f_3_2);

% Fork from s2_4 (-0.4, 4)
\node[node] (f_2_1) at (-0.2, 5) {};
\node[node] (f_2_2) at (0.6, 5) {};
\draw[thick] (s2_4) -- (f_2_1);
\draw[thick] (s2_4) -- (f_2_2);

% Fork from s1_4 (1.4, 4)
\node[node] (f_1_1) at (1.6, 5) {};
\node[node] (f_1_2) at (2.4, 5) {};
\draw[thick] (s1_4) -- (f_1_1);
\draw[thick] (s1_4) -- (f_1_2);

% Fork from s0_4 (3.2, 4)
\node[node] (f_0_1) at (4.0, 5) {}; % Moved from (3.4, 5) to (4.0, 5) to align with the line
%\node[node] (f_0_2) at (4.2, 5) {}; % Removed the rightmost node
\draw[thick] (s0_4) -- (f_0_1);
%\draw[thick] (s0_4) -- (f_0_2); % Removed the edge to the node

% 4. Add Dots (\cdots)
\tikzstyle{dots}=[font=\large]

% Level 1 dots
\node[dots] at (-0.1, 1) {$\cdots$};
\node[dots] at (1.8, 1) {$\cdots$};

% Level 2 dots
\node[dots] at (-1.1, 2) {$\cdots$};
\node[dots] at (0.7, 2) {$\cdots$};
\node[dots] at (2.6, 2) {$\cdots$};

% Level 3 dots
\node[dots] at (-2.1, 3) {$\cdots$};
\node[dots] at (-0.3, 3) {$\cdots$};
\node[dots] at (1.5, 3) {$\cdots$};
\node[dots] at (3.4, 3) {$\cdots$};

% Level 4 dots
\node[dots] at (-3.1, 4) {$\cdots$};
\node[dots] at (-1.3, 4) {$\cdots$};
\node[dots] at (0.5, 4) {$\cdots$};
\node[dots] at (2.3, 4) {$\cdots$};
\node[dots] at (4.2, 4) {$\cdots$};

% Level 5 dots (between groups)
%\node[dots] at (-4.4, 5) {$\cdots$};
%\node[dots] at (-2.5, 5) {$\cdots$};
%\node[dots] at (-0.7, 5) {$\cdots$};
%\node[dots] at (1.1, 5) {$\cdots$};
%\node[dots] at (2.9, 5) {$\cdots$};
\node[dots] at (5.0, 5) {$\cdots$}; % Adjusted after removing the rightmost node

% Level 5 dots (within forks)
%\node[dots] at (-3.4, 5) {$\cdots$};
\node[dots] at (-1.6, 5) {$\cdots$};
\node[dots] at (0.2, 5) {$\cdots$};
%\node[dots] at (2.0, 5) {$\cdots$};
%\node[dots] at (3.8, 5) {$\cdots$}; % Commented out after removing the rightmost node

\end{tikzpicture}
\end{center}

Fix a valuation $V:W\times \mathrm{Fml}\rightarrow 2$ on $(W, R)$. We show that $A^{(2k+1)}\leq_{T}V$. We take a $\Sigma^{0}_{2k+1}$-formula $\varphi(x, A)$ such that for all $n$, $\varphi(n, A)\Leftrightarrow n\in A^{(2k+1)}$. Then, there exists $e$ such that $\varphi(x, A)=\exists n_{1}\forall n_{2}\exists n_{3}\dots\forall n_{2k}\exists n_{2k+1}\theta_{e}(\vec{n}, x, A)$. Then, for $\langle{n, e\rangle}\in W$,
\begin{align*}
V(\langle{n, e\rangle}, \Diamond(\Box\Diamond)^{k}\top)=1&\Leftrightarrow\exists n_{1}\forall n_{2}\exists n_{3}\dots\forall n_{2k}\exists n_{2k+1}\\&(\langle{n, e, n_{1}, \dots, n_{2k}\rangle}R\langle{n, e, n_{1}, \dots, n_{2k},n_{2k+1}\rangle})\\
&\Leftrightarrow\exists n_{1}\forall n_{2}\exists n_{3}\dots\forall n_{2k}\exists n_{2k+1}\theta_{e}(n_{1}, \dots, n_{2k}, n_{2k+1}, n, A)\\
&\Leftrightarrow\varphi(n, A)\\
&\Leftrightarrow n\in A^{(2k+1)}.
\end{align*}
Thus, $V$ computes $A^{(2k+1)}$.
\end{proof}

\begin{cor}\label{2k+1resvalu}
Let $A$ be a set and $k\in\omega$. Then there exists a $A$-computable frame $F=(W, R)$ such that for any $(2k+1)$-restricted valuation $V:W\times \mathrm{Fml}[2k+1]\rightarrow 2$ on $(W, R)$, $V$ computes $A^{(2k+1)}$.
\end{cor}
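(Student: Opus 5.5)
The plan is to reuse the frame $F=(W,R)$ built in the proof of Theorem~\ref{comframepr} for the given $A$ and $k$, and to check that the argument given there only ever consults $V$ on a formula of modal degree at most $2k+1$. The formula used there is $\Diamond(\Box\Diamond)^{k}\top$, where $\top$ abbreviates $\bot\rightarrow\bot$. Since $\Diamond$ abbreviates $\neg\Box\neg$ and negation does not raise the degree, the degree function gives $\mathrm{deg}(\top)=0$. Each $\Diamond$ and each $\Box$ adds exactly $1$, so $\mathrm{deg}(\Diamond(\Box\Diamond)^{k}\top)=1+2k$. This formula therefore lies in $\mathrm{Fml}[2k+1]$, and so does every subformula of it.

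Next, fix an arbitrary $(2k+1)$-restricted valuation $V:W\times\mathrm{Fml}[2k+1]\rightarrow2$ on $(W,R)$. I will verify the chain of equivalences from the proof of Theorem~\ref{comframepr} by induction on the subformulas $\psi$ of $\Diamond(\Box\Diamond)^{k}\top$, going from the inside out. At each stage, the clauses for $\rightarrow$, $\bot$ and $\Box$ in the definition of a valuation apply verbatim to $V$ restricted to $\mathrm{Fml}[2k+1]$, because $\mathrm{Fml}[2k+1]$ is closed under subformulas. The claim at level $j$ is that, for $\sigma$ of length $2k+3-j$ extending $\langle n,e\rangle$, $V(\sigma,\psi_j)=1$ holds iff the corresponding alternating quantifier block over the remaining coordinates, ending in the $R$-edge condition at length $2k+2$, holds. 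Here $\psi_j$ is the innermost subformula with $j$ modal operators. This claim is proved from the $\Box$-clause and the definition of $R$ exactly as before.

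A few frame facts are needed along the way. Nodes of length $2k+3$ have no successors, so $\Diamond\top$ fails at length $2k+2$ exactly when no $R$-edge exists. A node of length below $2k+2$ always has successors, namely every one-step extension. Together these show that the quantifier pattern $\exists n_1\forall n_2\cdots\exists n_{2k+1}\theta_e$ is matched. Evaluating at $\sigma=\langle n,e\rangle$ then gives
$$V(\langle n,e\rangle,\Diamond(\Box\Diamond)^{k}\top)=1\iff n\in A^{(2k+1)}$$
for the fixed index $e$ of the $\Sigma^0_{2k+1}$ definition of $A^{(2k+1)}$. Hence $A^{(2k+1)}\leq_T V$ via the reduction $n\mapsto V(\langle n,e\rangle,\Diamond(\Box\Diamond)^{k}\top)$.

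The main obstacle is checking that the layers line up. The alternation of $\Box$ and $\Diamond$ must land the final $\Diamond\top$ exactly at the length-$(2k+2)$ nodes where the $\theta$-edge condition lives. Also, any unconstrained edges at lower levels must not trivialize a $\Box$ or a $\Diamond$ step, which holds because lower-level edges are exactly all one-step extensions. Once this bookkeeping is confirmed, the statement is immediate from Theorem~\ref{comframepr} together with the degree count above.
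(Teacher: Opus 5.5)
Your proposal is correct and takes the same approach as the paper. The paper's proof is the one line ``By Theorem~\ref{comframepr}'', and it relies on exactly the observation you make explicit: the witness formula $\Diamond(\Box\Diamond)^{k}\top$ has degree $2k+1$, so the argument only queries $V$ on $\mathrm{Fml}[2k+1]$. One small correction: the remark that length-$(2k+3)$ nodes have no successors plays no role; what matters is that the edges out of length-$(2k+2)$ nodes are exactly the $\theta_e$-witnesses.
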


\begin{proof}
By Theorem~\ref{comframepr}.
\end{proof}

\begin{cor}\label{vcompomega}
Let $A$ be a set. Then there exists a $A$-computable frame $F=(W, R)$ such that for any valuation $V:W\times \mathrm{Fml}\rightarrow 2$ on $(W, R)$, $V$ computes $A^{(\omega)}$.
\end{cor}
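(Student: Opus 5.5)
We will build a single $A$-computable frame by taking the disjoint union of the frames supplied by Theorem~\ref{comframepr} for every $k\in\omega$. We then show that any valuation on it computes $A^{(\omega)}=\{\langle m,n\rangle \mid n\in A^{(m)}\}$ uniformly.

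\textbf{The frame.} For each $k$, let $F_k=(W_k,R_k)$ be the frame constructed in the proof of Theorem~\ref{comframepr}: $W_k=\omega^{<2k+4}$, with $R_k$ defined from the $\theta_i$. The definition of $R_k$ is uniform in $k$, since the listing of $\Sigma^0_{2k+1}$ formulas and their bounded matrices is computable uniformly in $k$. Put
\[
W=\{\langle k,\sigma\rangle \mid \sigma\in W_k\},\qquad \langle k,\sigma\rangle R\langle l,\tau\rangle \Leftrightarrow k=l\land \sigma R_k\tau .
\]
Then $F=(W,R)$ is $A$-computable. No world in one component sees a world in another component. Hence, by induction on formulas, any valuation $V$ on $F$ restricts to a valuation $V_k(\sigma,\varphi)=V(\langle k,\sigma\rangle,\varphi)$ on $F_k$. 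This restriction is computable from $V$ uniformly in $k$.

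\textbf{Computing the jumps.} Fix $m$ and pick $k$ with $2k+1\ge m$, say $k=m$. A $\Sigma^0_m$ predicate is also $\Sigma^0_{2k+1}$ after padding with dummy quantifiers. So we can compute, uniformly in $m$, an index $e(m)$ such that
\[
n\in A^{(m)}\Leftrightarrow \exists n_1\forall n_2\cdots\exists n_{2k+1}\,\theta_{e(m)}(\vec n,n,A).
\]
Alternatively, one can compute $A^{(2k+1)}$ and then reduce $A^{(m)}$ to it uniformly. By the computation in the proof of Theorem~\ref{comframepr},
\[
n\in A^{(m)}\Leftrightarrow V(\langle k,\langle n,e(m)\rangle\rangle,\Diamond(\Box\Diamond)^k\top)=1 .
\]
The map $m\mapsto (k,e(m),\Diamond(\Box\Diamond)^k\top)$ is computable, so this gives a single $V$-computable procedure deciding $\langle m,n\rangle\in A^{(\omega)}$. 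Therefore $A^{(\omega)}\le_T V$.

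\textbf{Main obstacle.} The only delicate point is uniformity. The proof of Theorem~\ref{comframepr} gives, for each fixed $k$, some reduction of $A^{(2k+1)}$ to $V$. To compute $A^{(\omega)}$ we need the indices $e$ representing $A^{(m)}$ to be found effectively in $m$. I would handle this with the standard fact that a $\Sigma^0_m$ definition of $A^{(m)}$ is obtained uniformly in $m$ by iterating the jump definition, together with the computability of the listing of the $\theta_i$. Once these are in place, the remaining steps are routine.
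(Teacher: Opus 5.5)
Your proposal is correct and matches what the paper intends: the paper only says that a frame similar to that of Theorem~\ref{comframepr} works, and your uniform disjoint union of the frames $F_k$ fills in that remark. You also correctly identify the one real point, namely that the indices $e(m)$ and the formulas $\Diamond(\Box\Diamond)^k\top$ must be computable from $m$. (A minor simplification: the restriction of $V$ to each component is a valuation immediately, since the valuation clauses are local and $R$ never crosses components, so no induction on formulas is needed.)
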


\begin{proof}
This can be proved by a construction of a frame similar to that in Theorem~\ref{comframepr}.
\end{proof}

By the above, the following holds.

\begin{cor}[$\mathrm{RCA}_0$]
VEL implies $\mathrm{ACA}^{+}_0$.
\end{cor}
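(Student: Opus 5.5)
We formalize Corollary~\ref{vcompomega} inside $\mathrm{RCA}_0$, treating $k$ as a (possibly nonstandard) number rather than a fixed metatheoretic constant. Fix $X$; the goal is to produce $X^{(\omega)}$ from VEL. First, Proposition~\ref{velraca} gives $\mathrm{ACA}_0$, so for each fixed standard level the jump exists. But $X^{(\omega)}$ requires uniformity in $n$, so the frame must be built uniformly in $X$ and must handle all levels at once.

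\textbf{Frame.} We use the universal characterization of iterated jumps: for each $k$, membership $m\in X^{(k)}$ is expressed by a $\Sigma^0_k$ formula $\exists n_1\forall n_2\cdots\theta(\vec n,m,X)$ with bounded matrix $\theta$. This formula is obtained uniformly and primitive recursively from $k$; one can take the standard normal form defining $\mathrm{TJ}$ iterated, with $k$ alternating blocks.

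Let $W$ be the set of finite sequences $\langle k,m,n_1,\dots,n_j\rangle$ with $j\le k+1$. Put $\sigma R\tau$ iff $\tau$ extends $\sigma$ by one entry, with the additional bounded clause $\theta_k(n_1,\dots,n_k,m,X)$ checked on the final edge, as in Theorem~\ref{comframepr}. Then $R$ is $\Delta^0_1$ in $X$, so $(W,R)$ exists in $\mathrm{RCA}_0$. For each $k$, define by primitive recursion the formula $\chi_k$, an alternating string of $\Diamond$ and $\Box$ of length $k$ followed by $\top$, with the last modality $\Diamond$ so that the final edge realizes the matrix. Apply VEL with the trivial assignment $v\equiv 0$ to get $V$. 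Let $Y=\{\langle k,m\rangle : V(\langle k,m\rangle,\chi_k)=1\}$, which exists by $\Delta^0_1$ comprehension relative to $V$.

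\textbf{Verification.} It remains to check that $Y=X^{(\omega)}$, i.e.\ that each section $Y_k$ equals the $k$-th jump in the sense of Simpson's definition: $Y_0=X$ and $Y_{k+1}=\mathrm{TJ}(Y_k)$. This is the main obstacle. The computation in Theorem~\ref{comframepr} unwinds $V$ through $k$ modal layers, which is fine metatheoretically but inside $\mathrm{RCA}_0$ must be done for nonstandard $k$.

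I would handle this by proving, by induction on $j$ descending from the leaves, a statement $P(j)$: for every node $\sigma$ of length $j$, $V(\sigma,\chi^{(j)})=1$ iff the remaining quantifier block holds at $\sigma$. Here $\chi^{(j)}$ is the appropriate suffix formula. The problem is that the right-hand side is not of bounded complexity, so the induction is not available in $\mathrm{RCA}_0$.

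To avoid this, I would instead relate consecutive levels directly. The clauses of $V$ for $\Box$ and $\Diamond$ at a node of length $j$ refer only to $V$ at length $j+1$. Using $V$ as an oracle, each level of truth values is then $\Sigma^0_1$ (respectively $\Pi^0_1$) over the next level. From this, one shows by $\Sigma^0_1$-induction (in fact $\Delta^0_1$ in $V\oplus X$) that the sets $Z_j$, which record truth of the suffix formulas at nodes of depth $j$, satisfy $Z_j\equiv$ jump of $Z_{j+1}$ uniformly. Concretely, I would choose the frame so that each section literally computes $\mathrm{TJ}$ of the previous section. This amounts to building the frame to encode "$e\in\mathrm{TJ}(Y_k)$" via a single $\Diamond$ over a node that reads $Y_k$ as computed by $V$. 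Then only $\Sigma^0_1$ facts appear, and the recursion defining $X^{(\omega)}$ is verified by $\Sigma^0_0$-induction on $k$ relative to $V$. Conclude that $X^{(\omega)}$ exists, giving $\mathrm{ACA}^+_0$.
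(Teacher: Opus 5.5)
Your first paragraph is the paper's route. The paper's entire proof is the remark that Theorem~\ref{comframepr} and Corollary~\ref{vcompomega} can be formalized in $\mathrm{ACA}_0$, which VEL gives by Proposition~\ref{velraca}. You are right that the chain of equivalences there is a metatheoretic unwinding of $k$ modal layers and says nothing about nonstandard $k$. The gap is in your repair.

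The middle paragraph does not work for the tree you defined. A level being $\Sigma^0_1$ or $\Pi^0_1$ in the next one gives at most $Z_j\le_T\mathrm{TJ}(Z_{j+1})$, not equivalence. Identifying the sections with the exact iterates $Y_{k+1}=\mathrm{TJ}(Y_k)$ would still need the uniform normal-form theorem, which is exactly the induction over formulas of unbounded complexity you wanted to avoid.

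Your final idea, that each level should implement one jump of the previous level, is the right one. However, it is not carried out, and as stated it fails for three reasons:
\begin{itemize}
\item The frame is fixed before $V$ exists, so it cannot contain ``a node that reads $Y_k$ as computed by $V$''.
\item The tree $W$ you wrote down has no edges from level $k+1$ into level $k$.
\item A single $\Diamond$ only detects positive facts $i\in Y_k$. But $\mathrm{TJ}(Y_k)$ is defined by oracle computations that also use $i\notin Y_k$. So you need a $\Box$ over the positions of an oracle string together with a marker for its bits, which $v\equiv 0$ does not provide.
\end{itemize}

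A construction that works is as follows. Fix a $\Sigma^0_1$ normal form $e\in\mathrm{TJ}(Z)\leftrightarrow\exists\tau\,(\tau\prec Z\land\psi(e,\tau))$ with $\psi$ decidable and $\tau$ ranging over binary strings. Here $\tau\prec Z$ means $\tau$ is an initial segment of the characteristic function of $Z$. Take worlds $\langle k,m\rangle$, worlds $\langle k+1,e,\tau\rangle$ for $\tau$ with $\psi(e,\tau)$, and worlds $\langle k+1,e,\tau,i\rangle$ for $i<lh(\tau)$, with edges
\[
\langle k+1,e\rangle\;R\;\langle k+1,e,\tau\rangle\;R\;\langle k+1,e,\tau,i\rangle\;R\;\langle k,i\rangle .
\]
Use the assignment with $v(\langle 0,m\rangle,p)=1\leftrightarrow m\in X$ and $v(\langle k+1,e,\tau,i\rangle,p)=1\leftrightarrow\tau(i)=1$. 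Define formulas by primitive recursion: $\chi_0\equiv p$ and $\chi_{k+1}\equiv\Diamond\Box(p\leftrightarrow\Diamond\chi_k)$.

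For any valuation $V$ extending $v$, the set $Y=\{\langle k,m\rangle\mid V(\langle k,m\rangle,\chi_k)=1\}$ exists by $\Delta^0_1$-comprehension, and $Y_0=X$. Unfolding the three modal layers gives $V(\langle k+1,e\rangle,\chi_{k+1})=1$ iff some $\tau$ with $\psi(e,\tau)$ satisfies $\tau\prec Y_k$. That is, $Y_{k+1}=\mathrm{TJ}(Y_k)$, simultaneously for every $k$. Because the frame reuses the level-$k$ roots instead of building a separate tree for each $k$, no induction is needed, nor even Proposition~\ref{velraca}. The nonstandard-$k$ problem does not arise.
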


\begin{proof}
This is obtained by formalizing the arguments of Theorem~\ref{comframepr} and Corollary~\ref{vcompomega} within $\mathrm{ACA}_0$, which is possible since VEL implies $\mathrm{ACA}_0$ over $\mathrm{RCA}_0$ (by Proposition~\ref{velraca}).
\end{proof}

\begin{cor}[$\mathrm{RCA}_0$]
Restricted VEL implies $\mathrm{ACA}^{\prime}_0$.
\end{cor}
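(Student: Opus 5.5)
We will show that Restricted VEL (the statement of Corollary~\ref{pextend2}) proves, for every set $X$ and every $n\in\mathbb{N}$, that $X^{(n)}$ exists. The plan is to formalize inside $\mathrm{RCA}_0$ the construction of Theorem~\ref{comframepr}, using a uniform frame and a nonstandard depth parameter.

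\textbf{Step 1: get $\mathrm{ACA}_0$.} Restricted VEL with $k=1$ already yields a $1$-restricted valuation on every frame. By Proposition~\ref{wwwvelaca} this gives $\mathrm{ACA}_0$, so we may work in $\mathrm{ACA}_0$ from now on. In particular, Turing jumps of given sets exist and we have the formalized apparatus of $\Sigma^{0,X}_m$-indices.

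\textbf{Step 2: build the frame.} Given $X$ and $n$, we want $X^{(n)}$. It suffices to produce $X^{(2k+1)}$ with $2k+1\ge n$, since $X^{(n)}$ is then $\Delta^0_1$-definable from it. Fix $k$, possibly nonstandard. Build the $X$-computable frame $F=(W,R)$ with $W=\mathbb{N}^{<2k+4}$, exactly as in Theorem~\ref{comframepr}. The only change is that the last-level edges are now governed by a fixed universal bounded matrix. Concretely, we use a formula $\theta(\vec m,x,X)$ such that the $\Sigma^0_{2k+1}$ formula
\[
\exists m_1\forall m_2\cdots\exists m_{2k+1}\,\theta(\vec m,x,X)
\]
is a definition of $X^{(2k+1)}$ in the sense of Simpson's iterated-jump hierarchy. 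The relation $R$ is $\Delta^0_1$ in $X$, so $F$ exists in $\mathrm{RCA}_0$. Apply Restricted VEL to obtain a $(2k+1)$-restricted valuation $V$ on $F$ extending the all-zero assignment. Then set
\[
Y=\{x\mid V(\langle x\rangle,\Diamond(\Box\Diamond)^k\top)=1\}.
\]

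\textbf{Step 3 (main obstacle): verify $Y$ is the jump.} We must show $Y=X^{(2k+1)}$, meaning $Y$ satisfies the defining recursion of the iterated jump. Because $k$ may be nonstandard, we cannot unfold the alternating quantifiers by meta-level induction as in Theorem~\ref{comframepr}. Instead we argue internally.

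Define $Z_j=\{\sigma\in W: lh(\sigma)=2k+3-j,\ V(\sigma,\psi_j)=1\}$, where $\psi_j$ is the suffix formula of depth $j$ ($\Diamond\top$, $\Box\Diamond\top$, and so on). Each $Z_j$ exists uniformly in $j$ by $\Delta^0_1$ comprehension from $V$.

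By the $\Box$ and $\Diamond$ clauses of valuations, $Z_{j+1}$ is obtained from $Z_j$ by a single $\forall$ or $\exists$ over successor nodes. Hence the sequence $\langle Z_j\rangle$ is a witness to a hierarchy of jump-like sets. We then prove by induction on $j$ that $Z_j$ coincides with the appropriate $\Sigma/\Pi$-level set relative to $X^{(\cdot)}$ at that depth. This is an arithmetical statement in the parameter $V$, so the induction is available in $\mathrm{ACA}_0$.

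The delicate part is to set up the uniform universal formulas and the notion ``$Y=X^{(m)}$'' so that each inductive step is a single-quantifier, $\mathrm{TJ}$-type comparison. The intended form is: the set of nodes at level $l$ where some successor lies in $Z_j$ is $\mathrm{TJ}$-reducible uniformly. With this in place, a standard argument gives $Y\equiv_T X^{(2k+1)}$, and hence the existence of $X^{(n)}$. This yields $\mathrm{ACA}'_0$.
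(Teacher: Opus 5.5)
Your route is the one the paper takes. The paper proves this (Theorem~\ref{acaprimerveldouchi}, $(4)\Rightarrow(1)$) only by citing Corollary~\ref{2k+1resvalu}, i.e.\ by formalizing the frame of Theorem~\ref{comframepr} once $\mathrm{ACA}_0$ is available, as in your Steps 1--2. But for nonstandard $k$ all the content is in your Step 3, and as written it does not go through.

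First, both the choice of $\theta$ (``$\exists m_1\forall m_2\cdots\exists m_{2k+1}\theta$ defines $X^{(2k+1)}$'') and your induction hypothesis (``$Z_j$ coincides with the appropriate level set relative to $X^{(\cdot)}$'') refer to $X^{(j)}$ for nonstandard $j$. That is exactly what is not yet known to exist. $\mathrm{ACA}_0$ has no truth definition for prenex formulas of nonstandard length, so the normal form theorem cannot certify $\theta$. The only truth notion available is $V$ itself, and its relation to the jumps is what must be proved.

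Second, the $\Box$/$\Diamond$ clauses only give you upper bounds ($Z_{j+1}$ is $\Sigma^0_1$ or $\Pi^0_1$ in $Z_j$). Proving that jumps exist needs the reverse reductions, and at every level. $\mathrm{ACA}'_0$ asks for the whole sequence $(Y)_0=X$, $(Y)_{i+1}=\mathrm{TJ}((Y)_i)$ for $i<n$, not one set Turing equivalent to the top jump. With a single matrix tuned only so that the root level is $X^{(2k+1)}$, nothing makes the intermediate $Z_j$ encode the lower jumps. Extracting those jumps from the top set would again need reductions verified at nonstandard levels.

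There is also an indexing slip. With root $\langle x\rangle$ of length $1$ in $\mathbb{N}^{<2k+4}$, the formula $\Diamond(\Box\Diamond)^k\top$ stops at length $2k+2$ and never meets the $\theta$-edges, so your $Y$ is all of $\mathbb{N}$.

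The missing idea is to make each layer of the frame perform exactly one Turing jump. Then the jump relation follows from the valuation clauses for each $j$ separately, with no induction and no normal form. One way to do this:
\begin{itemize}
\item Take worlds $\langle j,x\rangle$ for $j\le n$, and $\langle j+1,x,\tau\rangle$, $\langle j+1,x,\tau,z\rangle$ for $j<n$, $\tau\in 2^{<\mathbb{N}}$, $z<lh(\tau)$.
\item Put $\langle j+1,x\rangle R\langle j+1,x,\tau\rangle$ iff $\Phi^{\tau}_{x}(x)\downarrow$. This is the matrix of $\mathrm{TJ}$ evaluated on the oracle string $\tau$, so $x\in\mathrm{TJ}(Z)\leftrightarrow\exists\tau\prec Z\,\Phi^{\tau}_{x}(x)\downarrow$.
\item Put $\langle j+1,x,\tau\rangle R\langle j+1,x,\tau,z\rangle$ and $\langle j+1,x,\tau,z\rangle R\langle j,z\rangle$.
\item Let $v$ make $q$ true at $\langle 0,y\rangle$ iff $y\in X$, and $p$ true at $\langle j+1,x,\tau,z\rangle$ iff $\tau(z)=1$.
\item Set $\chi_0=q$ and $\chi_{j+1}=\Diamond\Box\bigl((p\to\Box\chi_j)\land(\neg p\to\Box\neg\chi_j)\bigr)$, so $\mathrm{deg}(\chi_j)=3j$.
\item Take a $3n$-restricted $V$ extending $v$, and let $Y_j=\{z\mid V(\langle j,z\rangle,\chi_j)=1\}$. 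This is a uniform sequence by $\Delta^0_1$-comprehension.
\end{itemize}
Unwinding the clauses at three consecutive worlds gives, for every $j<n$, $x\in Y_{j+1}\leftrightarrow\exists\tau\,\bigl(\Phi^{\tau}_{x}(x)\downarrow\land\forall z<lh(\tau)\,(\tau(z)=1\leftrightarrow z\in Y_j)\bigr)\leftrightarrow x\in\mathrm{TJ}(Y_j)$. Also $Y_0=X$, so $\langle Y_j\rangle_{j\le n}$ is $X^{(n)}$. This works directly over $\mathrm{RCA}_0$, so your Step 1 is not even needed.
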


\subsection{Frame definability for modal propositional logic}

Next, we discuss frame definability for modal propositional logic. In particular, we consider frame definability for Geach axioms and $\mathbf{GL}$. First, we introduce Geach axioms. The proof of frame definability for Geach axioms is obtained by formalizing the proof in~\cite{MR556867} within $\mathrm{ACA}^{+}_0$.

\begin{dfn}[$\mathrm{RCA}_0$]
Let $(W, R)$ be a frame, $n\in\mathbb{N}$, and $u, v\in W$, and let $\mathrm{Sec}^{=n}(W)$ be the set of all sequences $\sigma$ of elements of $W$ such that $lh(\sigma)=n$, and for all $i, j$ with $i<j<n$, $\sigma_{i}R\sigma_{j}$. Then we denote $u\rightsquigarrow^{n}v$ by $\exists\sigma\in\mathrm{Sec}^{=n}(W)(\sigma_{0}=u\land\sigma_{lh(\sigma)-1}=v)$.
\end{dfn}

\begin{prop}[$\mathrm{RCA}_0$]\label{boxneq}
Let $(W, R, V)$ be a Kripke model, $\varphi\in\mathrm{Fml}$, $w\in W$, and $n\in\mathbb{N}$. Then the following holds:
\begin{align*}
V(w, \Box^{n}\varphi)=1&\Leftrightarrow\forall u\in W((w\rightsquigarrow^{n}u)\rightarrow V(u, \varphi)=1),\\
V(w, \Diamond^{n}\varphi)=1&\Leftrightarrow\exists u\in W((w\rightsquigarrow^{n}u)\land V(u, \varphi)=1).
\end{align*} 
\end{prop}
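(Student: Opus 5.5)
We prove the $\Box^{n}$ clause first and then obtain the $\Diamond^{n}$ clause by duality. Throughout, $w\rightsquigarrow^{n}u$ is read as the existence of an $R$-path $w=\sigma_{0}R\sigma_{1}R\cdots R\sigma_{n}=u$ of $n$ steps. Taken literally, the definition of $\mathrm{Sec}^{=n}(W)$ gives sequences of length $n$ with $\sigma_{i}R\sigma_{j}$ for all $i<j$. That reading has an off-by-one in the length and would force transitivity along the chain, so the equivalence would fail for non-transitive frames. We therefore assume the intended meaning is consecutive steps. The main point of care is the induction available in $\mathrm{RCA}_0$. The relation $w\rightsquigarrow^{n}u$ is $\Sigma^{0}_{1}$ in $R$, since it quantifies over an unbounded code $\sigma$. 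Hence the right-hand side of the $\Box^{n}$ clause is $\Pi^{0}_{1}$ in $R$ and $V$, and a naive induction on $n$ for the whole biconditional would need $\Pi^{0}_{2}$-induction. To avoid this, we split the biconditional into two directions and handle each with induction of complexity at most $\Sigma^{0}_{1}$/$\Pi^{0}_{1}$.

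\emph{Left to right.} Fix $\varphi$ and prove by $\Pi^{0}_{1}$-induction on $m\le n$ (available in $\mathrm{RCA}_0$) the statement
\[
\forall w\,\forall\sigma\,\bigl(\sigma \text{ an $R$-path of $m$ steps from $w$}\land V(w,\Box^{m}\psi)=1\rightarrow V(\sigma_{m},\psi)=1\bigr)
\]
for all $\psi$ simultaneously. Since $V$ is a total function given as a set, the matrix is $\Delta^{0}_{1}$, so the whole statement is $\Pi^{0}_{1}$. The base case is trivial. For the step, write $\Box^{m+1}\psi=\Box(\Box^{m}\psi)$ and use clause 6 of the definition of Kripke model at $w$. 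This gives $V(\sigma_{1},\Box^{m}\psi)=1$. We then apply the induction hypothesis to the tail path from $\sigma_{1}$, which has $m$ steps. This is why we let $\psi$ vary, or equivalently peel off the outermost box via $\Box^{m+1}\psi=\Box^{m}(\Box\psi)$.

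\emph{Right to left.} We argue contrapositively. Suppose $V(w,\Box^{n}\varphi)=0$. We prove by $\Sigma^{0}_{1}$-induction on $i\le n$ that there exists a finite sequence $\sigma$ of length $i+1$ with the following properties: $\sigma_{0}=w$, $\sigma_{j}R\sigma_{j+1}$ for all $j<i$, and $V(\sigma_{j},\Box^{n-j}\varphi)=0$ for all $j\le i$. This formula is $\Sigma^{0}_{1}$ in $i$. For the step $i\to i+1$ with $i<n$, we have $V(\sigma_{i},\Box(\Box^{n-i-1}\varphi))=0$, so clause 6 yields some $t$ with $\sigma_{i}Rt$ and $V(t,\Box^{n-i-1}\varphi)=0$, and we append $t$ to $\sigma$. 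At $i=n$ the sequence $\sigma$ witnesses $w\rightsquigarrow^{n}\sigma_{n}$ with $V(\sigma_{n},\varphi)=0$. This refutes the right-hand side.

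\emph{The $\Diamond^{n}$ clause.} First show by $\Pi^{0}_{1}$-induction on $n$, for all $w$ and $\varphi$, that $V(w,\Diamond^{n}\varphi)=1-V(w,\Box^{n}\neg\varphi)$. This follows by unfolding $\Diamond\chi=\neg\Box\neg\chi$, using clause 5 for $\neg$ and clause 6 for $\Box$ to push negations through one layer at a time. The $\Box^{n}$ clause applied to $\neg\varphi$ then gives the $\Diamond^{n}$ clause directly. We expect the only real obstacle to be this bookkeeping of induction complexity in the two directions; the logical content is routine.
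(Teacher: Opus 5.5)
Your proposal is correct and follows essentially the paper's route: left-to-right by an induction of $\Pi^0_1$ complexity that peels off the first $R$-step and applies the hypothesis to the tail (the paper calls this $\Sigma^0_1$-induction, which is equivalent over $\mathrm{RCA}_0$), and right-to-left by contradiction via a $\Sigma^0_1$-induction that produces a path along which $\Box^{n-j}\varphi$ fails. You are right that the literal definition of $\mathrm{Sec}^{=n}(W)$ is off, and your consecutive-step reading of $\rightsquigarrow^{n}$ is the one the paper's own argument relies on (e.g.\ when it infers $w_{0}\rightsquigarrow^{k+1}v_{\star}$ from $w_{0}\rightsquigarrow^{k}u_{\star}$ and $u_{\star}Rv_{\star}$); two small remarks are that with $\Box^{m+1}\psi=\Box(\Box^{m}\psi)$ it is the starting world, not $\psi$, that must vary in the induction hypothesis (your formula already quantifies over $w$, so no harm), and that your duality argument for $\Diamond^{n}$ fills in a case the paper leaves to the reader.
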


\begin{proof}
We show the proof only for the case of $\Box^{n}\varphi$. Let $(W, R, V)$ be a Kripke model, $\varphi\in\mathrm{Fml}$. Then the following holds: for all $n\in\mathbb{N}$ and $w\in W$,
\begin{align*}
\forall u\in W((w\rightsquigarrow^{n}u)\rightarrow V(u, \varphi)=1)\Leftrightarrow\forall\sigma\in\mathrm{Sec}^{=n}(W)(\sigma_{0}=w\rightarrow V(\sigma_{lh(\sigma)-1}, \varphi)=1).
\end{align*} 

Then we show the following statement by $\Sigma^{0}_{1}$-induction on $n$:
\begin{align*}
\forall w\in W\bigl(V(w, \Box^{n}\varphi)=1\Rightarrow\forall u\in W((w\rightsquigarrow^{n}u)\rightarrow V(u, \varphi)=1)\bigr).
\end{align*} 

The case of $n=0$ is obvious. Assume that the case of $n$ holds. Fix $w\in W$ with $V(w, \Box^{n+1}\varphi)=1$. Take $u\in W$ with $w\rightsquigarrow^{n+1}u$. There exists $w_{0}\in W$ such that $wRw_{0}$ and $w_{0}\rightsquigarrow^{n}u$. Since $V(w, \Box^{n+1}\varphi)=1$ holds, $V(w_{0}, \Box^{n}\varphi)=1$ holds. By assumption, $\forall v\in W((w_{0}\rightsquigarrow^{n}v)\rightarrow V(v, \varphi)=1)$. Thus, $V(u, \varphi)=1$ holds, so the case of $n+1$ holds. This follows by induction.
\vspace{2mm}

Next, we show that the right side of the statement implies the left side. To derive a contradiction, we assume the following: there exists $m\in\mathbb{N}$ and $w_{0}\in W$ such that 
\begin{align*}
V(w_{0}, \Box^{m}\varphi)=0\land\forall u\in W((w_{0}\rightsquigarrow^{m}u)\rightarrow V(u, \varphi)=1).
\end{align*} 

Then we show the following statement by $\Sigma^{0}_{1}$-induction on $k\leq m$:
\begin{align*}
\forall i(m-k\leq i\leq m\rightarrow(\exists u_{\star}\in W)\bigl((w_{0}\rightsquigarrow^{m-i}u_{\star})\land V(u_{\star}, \Box^{i}\varphi)=0\bigr)).
\end{align*} 

If $k=0$, then $i=m$ and $V(w_{0}, \Box^{m}\varphi)=0$. Thus, the case of $k=0$ holds. Assume that the case of $k$ holds. By assumption, it is enough to consider the case of $i=m-(k+1)$. Then, by assumption, there exists $u_{\star}\in W$ such that $(w_{0}\rightsquigarrow^{k}u_{\star})$ and $V(u_{\star}, \Box^{m-k}\varphi)=0$ hold. Thus, there exists $v_{\star}\in W$ such that $u_{\star}Rv_{\star}$ and $V(v_{\star}, \Box^{m-(k+1)}\varphi)=0$, so $V(v_{\star}, \Box^{i}\varphi)=0$. Also, $w_{0}\rightsquigarrow^{k+1}v_{\star}$, that is, $w_{0}\rightsquigarrow^{m-i}v_{\star}$. Thus, the case of $k+1$ holds. By induction, we show it. However, for $k=m$ and $i=0$, $(\exists u_{\star}\in W)\bigl((w_{0}\rightsquigarrow^{m}u_{\star})\land V(u_{\star}, \varphi)=0\bigr)$ holds. This is a contradiction.
\end{proof}

\begin{dfn}[Geach axiom, $\mathrm{RCA}_0$~\cite{MR556867}]
Let $i, j, m, n\in\mathbb{N}$ and $\varphi\in\mathrm{Fml}$. Then we define the modal formula $\mathrm{Ga}$ (Geach axiom):
\begin{align*}
\mathrm{Ga}(i, j, m, n;\varphi)\equiv\Diamond^{i}\Box^{m}\varphi\rightarrow\Box^{j}\Diamond^{n}\varphi.
\end{align*}
\end{dfn}

In what follows, we assume that the parameters $i, j, m, n$ in the Geach axiom $\mathrm{Ga}(i, j, m, n; \varphi)$ satisfy neither $i=n=0$ nor $m=j=0$ holds. The following proposition is part of the proof of the soundness theorem for Geach axioms. As shown below, frame definability for Geach axioms is equivalent to $\mathrm{ACA}^{+}_0$ over $\mathrm{RCA}_0$, but this part is provable in $\mathrm{RCA}_0$.

\begin{prop}[$\mathrm{RCA}_0$~\cite{MR556867}]\label{fdeasy}
Let $F=(W, R)$ be a frame and $i, j, m, n\in\mathbb{N}$. Assume that the following holds: $\forall x\forall y\forall z\in W\bigl((x\rightsquigarrow^{i}y)\land(x\rightsquigarrow^{j}z)\rightarrow\exists u\in W((y\rightsquigarrow^{m}u)\land(z\rightsquigarrow^{n}u))\bigr)$. Then for all $\varphi\in\mathrm{Fml}$, $F\Vdash \mathrm{Ga}(i, j, m, n;\varphi)$.
\end{prop}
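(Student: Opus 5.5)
We prove this directly from the semantics, using Proposition~\ref{boxneq} to turn iterated modalities into statements about $\rightsquigarrow$-paths. Fix a frame $F=(W,R)$ satisfying the confluence hypothesis. Fix a valuation $V$ on $F$, a formula $\varphi$, and a world $x\in W$. Since $F\Vdash \mathrm{Ga}(i,j,m,n;\varphi)$ only quantifies over valuations that already exist, we never need to construct a valuation; this is why the direction is available in $\mathrm{RCA}_0$. By the clause for $\rightarrow$, it suffices to show that $V(x,\Diamond^{i}\Box^{m}\varphi)=1$ implies $V(x,\Box^{j}\Diamond^{n}\varphi)=1$.

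Assume $V(x,\Diamond^{i}\Box^{m}\varphi)=1$. Applying the $\Diamond^{n}$ half of Proposition~\ref{boxneq} with $n:=i$ and the formula $\Box^{m}\varphi$, we obtain $y\in W$ with $x\rightsquigarrow^{i}y$ and $V(y,\Box^{m}\varphi)=1$. To verify $V(x,\Box^{j}\Diamond^{n}\varphi)=1$, we use the $\Box^{n}$ half of Proposition~\ref{boxneq} with $n:=j$. So it suffices to take an arbitrary $z$ with $x\rightsquigarrow^{j}z$ and show $V(z,\Diamond^{n}\varphi)=1$.

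By the hypothesis applied to $x,y,z$, there is $u\in W$ with $y\rightsquigarrow^{m}u$ and $z\rightsquigarrow^{n}u$. Since $V(y,\Box^{m}\varphi)=1$ and $y\rightsquigarrow^{m}u$, Proposition~\ref{boxneq} gives $V(u,\varphi)=1$. Since $z\rightsquigarrow^{n}u$ and $V(u,\varphi)=1$, the $\Diamond$ clause of Proposition~\ref{boxneq} gives $V(z,\Diamond^{n}\varphi)=1$, as required. All quantifiers range over worlds, and $V$ is given as a set, so each step is a first-order manipulation within $\mathrm{RCA}_0$.

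The main obstacle is a point of definition rather than of argument. As $\mathrm{Sec}^{=n}$ is written, with $\sigma_{i}R\sigma_{j}$ for all $i<j$, the relation $u\rightsquigarrow^{n}v$ uses sequences of length $n$, so it amounts to $n-1$ steps. The indexing therefore has to be read consistently with how Proposition~\ref{boxneq} was proved, namely by decomposing $w\rightsquigarrow^{n+1}u$ into $wRw_{0}$ followed by $w_{0}\rightsquigarrow^{n}u$. The plan is to take $\rightsquigarrow^{n}$ exactly as Proposition~\ref{boxneq} uses it, i.e.\ as $n$ consecutive $R$-steps. Then the argument above uses only the two directions of that proposition and the hypothesis, with no further induction.
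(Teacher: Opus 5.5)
Your proof is correct and is essentially the paper's argument: you obtain $y$ from the $\Diamond^{i}$ half of Proposition~\ref{boxneq}, take an arbitrary $z$ with $x\rightsquigarrow^{j}z$, apply the confluence hypothesis to get $u$, and conclude with both halves of Proposition~\ref{boxneq}. Your remark on the indexing of $\mathrm{Sec}^{=n}$ correctly identifies a flaw in the paper's definition, which as literally written also demands $\sigma_{i}R\sigma_{j}$ for all $i<j$ rather than only for consecutive pairs, and reading $\rightsquigarrow^{n}$ as $n$ consecutive $R$-steps, as the proof of Proposition~\ref{boxneq} implicitly does, is the right fix.
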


\begin{proof}
Let $\varphi\in\mathrm{Fml}$ and $(W, R, V)$ be a Kripke model. Fix $x\in W$, and assume that $V(x, \Diamond^{i}\Box^{m}\varphi)=1$. Then, by Proposition~\ref{boxneq}, there exists $y\in W$ such that $x\rightsquigarrow^{i}y$ and $V(y, \Box^{m}\varphi)=1$. Take $z\in W$ with $x\rightsquigarrow^{j}z$. By assumption, we have $u\in W$ such that $y\rightsquigarrow^{m}u$ and $z\rightsquigarrow^{n}u$. Then $V(z, \Diamond^{n}\varphi)=1$ by $V(u, \varphi)=1$ and Proposition~\ref{boxneq}. Thus, $V(x, \Box^{j}\Diamond^{n}\varphi)=1$ holds, so $F\Vdash \mathrm{Ga}(i, j, m, n;\varphi)$.
\end{proof}

\begin{thm}[Frame definability for Geach axioms, $\mathrm{ACA}^{+}_0$~\cite{MR556867}]\label{fdga}
Let $F=(W, R)$ be a frame and $i, j, m, n\in\mathbb{N}$. Then the following statements are equivalent:
\begin{enumerate}
    \item $\forall x\forall y\forall z\in W\bigl((x\rightsquigarrow^{i}y)\land(x\rightsquigarrow^{j}z)\rightarrow\exists u\in W((y\rightsquigarrow^{m}u)\land(z\rightsquigarrow^{n}u))\bigr)$,
    \item $F\Vdash \mathrm{Ga}(i, j, m, n;\varphi)$ for all $\varphi\in\mathrm{Fml}$.
\end{enumerate}
\end{thm}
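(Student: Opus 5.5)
The direction (1)$\Rightarrow$(2) is exactly Proposition~\ref{fdeasy}, which already holds in $\mathrm{RCA}_0$, so the work lies in (2)$\Rightarrow$(1). I would argue by contraposition, following the classical Lemmon--Scott argument. Suppose (1) fails. Then there are $x,y,z\in W$ with $x\rightsquigarrow^{i}y$ and $x\rightsquigarrow^{j}z$, and there is no $u$ with both $y\rightsquigarrow^{m}u$ and $z\rightsquigarrow^{n}u$. Let $p$ be a fixed propositional variable. Define $v:W\times\mathrm{Prop}\rightarrow 2$ by $v(w,p)=1\Leftrightarrow y\rightsquigarrow^{m}w$, and $v(w,q)=0$ for all $q\neq p$.

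The first point to settle is that this $v$ exists as a set. The relation $y\rightsquigarrow^{m}w$ is $\Sigma^0_1$ in $R$, since it asserts the existence of a finite sequence in $\mathrm{Sec}^{=m}(W)$. It is therefore arithmetical, and arithmetical comprehension, available in $\mathrm{ACA}^{+}_0$, gives $v$. Next I apply Theorem~\ref{pextend} (VEL, provable in $\mathrm{ACA}^{+}_0$) to extend $v$ to a full valuation $V$ on $F$. This is the only place where the strength of $\mathrm{ACA}^{+}_0$ is used, and it is the essential step: the countermodel must be an actual valuation on all of $\mathrm{Fml}$ for the hypothesis $F\Vdash\mathrm{Ga}(i,j,m,n;p)$ to apply to it. Everything after this is verification using Proposition~\ref{boxneq}.

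The verification runs as follows.
\begin{enumerate}
\item By Proposition~\ref{boxneq}, $V(y,\Box^{m}p)=1$, since every $u$ with $y\rightsquigarrow^{m}u$ satisfies $p$ by the definition of $v$.
\item Since $x\rightsquigarrow^{i}y$, Proposition~\ref{boxneq} applied to $\Diamond^{i}$ gives $V(x,\Diamond^{i}\Box^{m}p)=1$.
\item By hypothesis (2), $V(x,\Box^{j}\Diamond^{n}p)=1$.
\item Since $x\rightsquigarrow^{j}z$, Proposition~\ref{boxneq} yields $V(z,\Diamond^{n}p)=1$.
\item Applying Proposition~\ref{boxneq} once more, there is a $u$ with $z\rightsquigarrow^{n}u$ and $V(u,p)=1$, that is, $y\rightsquigarrow^{m}u$. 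This contradicts the choice of $x,y,z$.
\end{enumerate}

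One small check is needed: Proposition~\ref{boxneq} must be applied to the formulas $\Box^{m}p$ and $\Diamond^{n}p$ in place of $\varphi$. This is unproblematic, because that proposition is stated for arbitrary $\varphi\in\mathrm{Fml}$. I would also note that $i,j,m,n$ range over possibly nonstandard elements of $\mathbb{N}$. The argument is still fine, because Proposition~\ref{boxneq} was proved by $\Sigma^0_1$-induction for arbitrary $n$.

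I expect no real obstacle beyond confirming that $v$ is definable arithmetically and that VEL supplies the full valuation. The nontrivial reverse-mathematical content, namely why $\mathrm{ACA}^{+}_0$ rather than $\mathrm{ACA}_0$ is needed, is delegated entirely to Theorem~\ref{pextend}.
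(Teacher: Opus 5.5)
Your proposal is correct and follows the paper's proof: you use the same countermodel $v(w,p)=1\Leftrightarrow y\rightsquigarrow^{m}w$, extend it to a full valuation by VEL, and verify with Proposition~\ref{boxneq}. The only differences are cosmetic. You derive a contradiction from $V(x,\Box^{j}\Diamond^{n}p)=1$, whereas the paper computes $V(z,\Box^{n}\neg p)=1$ and concludes $V(x,\mathrm{Ga}(i,j,m,n;p))=0$ directly. You also set the other variables false rather than true, and you add the remark that $v$ exists by arithmetical comprehension, which the paper leaves implicit.
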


\begin{proof}
$(1)\Rightarrow(2)$: By Proposition~\ref{fdeasy}.
\vspace{1mm}

$(2)\Rightarrow(1)$: Fix $p_{0}\in\mathrm{Prop}$. Assume that there exists $a, b, c\in W$ such that $(a\rightsquigarrow^{i}b)\land(a\rightsquigarrow^{j}c)$ and $\forall u\in W((b\not\rightsquigarrow^{m}u)\lor(c\not\rightsquigarrow^{n}u))$. Then we define $v:W\times\mathrm{Prop}\rightarrow2$ by the following: for all $w\in W$,
\begin{align*}
&v(w, p_{0})=1\Leftrightarrow b\rightsquigarrow^{m}w,\\
&v(w, p)=1,\, \text{for all $p\in\mathrm{Prop}\setminus\{p_{0}\}$}.
\end{align*}

By VEL (that is, $\mathrm{ACA}^{+}_0$), we have the valuation $V: W\times\mathrm{Fml}\rightarrow2$ on $(W, R)$ such that $V$ is an extension of $v$. Then for all $d\in W$ with $b\rightsquigarrow^{m}d$, $V(d, p_{0})=1$. Thus, $V(b, \Box^{m}p_{0})=1$. Since $a\rightsquigarrow^{i}b$, $V(a, \Diamond^{i}\Box^{m}p_{0})=1$. By assumption, $a\rightsquigarrow^{j}c$ and $\forall u\in W((c\rightsquigarrow^{n}u)\rightarrow(b\not\rightsquigarrow^{m}u))$. Thus, $V(c, \Box^{n}\neg p_{0})=1$, so $V(a, \Diamond^{j}\Box^{n}\neg p_{0})=1$, that is, $V(a, \Box^{j}\Diamond^{n}p_{0})=0$. Thus, $V(a,\mathrm{Ga}(i, j, m, n;p_{0}))=0$, so $F\not\Vdash \mathrm{Ga}(i, j, m, n;p_{0})$.
\end{proof}

Regarding frame definability, one can consider the following weaker version. 

\begin{cor}[Weak frame definability for Geach axioms, $\mathrm{ACA}^{\prime}_0$]\label{wfdgeach}
Let $F=(W, R)$ be a frame and $i, j, m, n\in\mathbb{N}$ with $k=i+j+m+n$. Then the following statements are equivalent:
\begin{enumerate}
    \item $\forall x\forall y\forall z\in W\bigl((x\rightsquigarrow^{i}y)\land(x\rightsquigarrow^{j}z)\rightarrow\exists u\in W((y\rightsquigarrow^{m}u)\land(z\rightsquigarrow^{n}u))\bigr)$,
    \item $(F, V)\Vdash\mathrm{Ga}(i, j, m, n;p)$ for every $k$-restricted valuation $V$ on $F$ and $p\in\mathrm{Prop}$.
\end{enumerate}
\end{cor}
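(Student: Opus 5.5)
The plan is to mirror the proof of Theorem~\ref{fdga}, replacing VEL by the Restricted Valuation Extension Lemma (Corollary~\ref{pextend2}), which is available in $\mathrm{ACA}^{\prime}_0$. The point is that $\mathrm{Ga}(i, j, m, n;p)$ has degree $i+j+\max(m,n)\leq k$. So a $k$-restricted valuation already assigns values to every subformula that occurs, and the reasoning of Proposition~\ref{boxneq} can be run inside it.

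For $(1)\Rightarrow(2)$, we fix a $k$-restricted valuation $V$ and a point $x$, and repeat the argument of Proposition~\ref{fdeasy}. The only check is that Proposition~\ref{boxneq} holds for restricted valuations whenever $\Box^{n}\varphi$ lies in $\mathrm{Fml}[k]$. The $\Sigma^0_1$-inductions in its proof only ever mention formulas $\Box^{i}\varphi$ with $i\leq n$, all of which lie in the domain of $V$. So the proof carries over verbatim in $\mathrm{RCA}_0$: from $V(x,\Diamond^i\Box^m p)=1$ we get $y$ with $x\rightsquigarrow^i y$ and $V(y,\Box^m p)=1$. For any $z$ with $x\rightsquigarrow^j z$ we use (1) to get $u$ with $y\rightsquigarrow^m u$ and $z\rightsquigarrow^n u$, hence $V(u,p)=1$ and $V(z,\Diamond^n p)=1$, and therefore $V(x,\Box^j\Diamond^n p)=1$.

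For $(2)\Rightarrow(1)$, we argue contrapositively. Suppose $a,b,c$ witness the failure of (1). Define $v(w,p_0)=1$ iff $b\rightsquigarrow^{m}w$, and $v(w,p)=1$ for every other $p$. The relation $b\rightsquigarrow^m w$ is $\Sigma^0_1$ in $R$, so $v$ exists by arithmetical comprehension; $\mathrm{ACA}_0\subseteq\mathrm{ACA}^{\prime}_0$ suffices for this. Corollary~\ref{pextend2} then gives a $k$-restricted valuation $V$ extending $v$. Applying Proposition~\ref{boxneq} (restricted version) exactly as in the proof of Theorem~\ref{fdga}, we get $V(b,\Box^m p_0)=1$ and hence $V(a,\Diamond^i\Box^m p_0)=1$. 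On the other side, $V(c,\Box^n\neg p_0)=1$, so $V(a,\Box^j\Diamond^n p_0)=0$. Thus $V(a,\mathrm{Ga}(i,j,m,n;p_0))=0$, contradicting (2).

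The main obstacle is the bookkeeping in the restricted form of Proposition~\ref{boxneq}. We must ensure the induction formula is still $\Sigma^0_1$, or at least arithmetical in $V\oplus R$; since $V$ is a set parameter and $\rightsquigarrow$ is $\Sigma^0_1$, this is as in the original proof. We must also verify the degree bound, namely that $\neg\Box^n\neg p_0$ and the iterated $\Diamond^j$, $\Box^i$ stay within degree $k$. This holds because $\deg(\Diamond^i\Box^m p)=i+m\leq k$ and $\deg(\Box^j\Diamond^n p)=j+n\leq k$.
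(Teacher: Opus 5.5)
Your proposal is correct and follows the paper's proof. Direction $(1)\Rightarrow(2)$ is the $\mathrm{RCA}_0$ argument of Proposition~\ref{fdeasy} run inside a $k$-restricted valuation, and $(2)\Rightarrow(1)$ replaces VEL in the proof of Theorem~\ref{fdga} by the Restricted VEL of Corollary~\ref{pextend2}; your remark that $v$ exists by arithmetical comprehension fills a detail the paper leaves implicit. One harmless slip: the degree of $\mathrm{Ga}(i,j,m,n;p)$ is $\max(i+m,j+n)$, not $i+j+\max(m,n)$, but both are bounded by $k$ (as your last paragraph correctly notes), so nothing is affected.
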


\begin{proof}
The part of the argument for Theorem~\ref{fdga} that does not rely on VEL can be carried out within $\mathrm{RCA}_0$. On the other hand, for the part that requires VEL, Corollary~\ref{pextend2} implies that it suffices to construct a $k$-restricted valuation over $\mathrm{ACA}^{\prime}_0$.
\end{proof}

%$\mathbf{GL}$ is the smallest normal modal logic which contains $\mathbf{K}\cup\{\Box p\rightarrow\Box\Box p\}$.

%\begin{prop}[$\mathrm{RCA}_0$~\cite{10.1007/978-3-031-95908-0_32}]\label{gl4}
%$\mathrm{Pbl}_{\mathbf{GL}}(\emptyset, \Box p\rightarrow\Box\Box p)$ holds.
%\end{prop}

As an example of a frame-definability result not expressible by a Geach axiom, we consider the frame definability of the Gödel–Löb logic, $\mathbf{GL}$. The proof of frame definability for $\mathbf{GL}$ is obtained by formalizing the proof in~\cite{MR1837791} within $\mathrm{ACA}^{+}_0$.

\begin{prop}[Frame definability for $\mathbf{GL}$, $\mathrm{ACA}^{+}_0$~\cite{MR1837791}]\label{fdgl}
Let $F=(W, R)$ be a frame. Then the following statements are equivalent:
\begin{enumerate}
    \item $F$ is transitive and $W$ has no infinite ascending sequences by $R$,
    \item $F\Vdash\Box(\Box\varphi\rightarrow\varphi)\rightarrow\Box\varphi$ for all $\varphi\in\mathrm{Fml}$.
\end{enumerate}
\end{prop}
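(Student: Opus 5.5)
We prove the two directions separately, working in $\mathrm{ACA}^{+}_0$. VEL (Theorem~\ref{pextend}) is needed only for $(2)\Rightarrow(1)$. The direction $(1)\Rightarrow(2)$ is soundness, and we expect to carry it out in $\mathrm{ACA}_0$.

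\emph{$(1)\Rightarrow(2)$.} Fix a valuation $V$, a formula $\varphi$ and a world $w$, and suppose $V(w,\Box(\Box\varphi\rightarrow\varphi))=1$ but $V(w,\Box\varphi)=0$. Then there is $u_0$ with $wRu_0$ and $V(u_0,\varphi)=0$. By transitivity every $u$ with $wRu$ satisfies $V(u,\Box\varphi\rightarrow\varphi)=1$. Hence every $R$-successor $u$ of $w$ with $V(u,\varphi)=0$ has $V(u,\Box\varphi)=0$, and so has a successor $u'$ with $V(u',\varphi)=0$; this $u'$ is again a successor of $w$ by transitivity. Since $V$ is a set, the relation $S=\{(u,u')\mid uRu',\ wRu',\ V(u',\varphi)=0\}$ exists by $\Delta^0_1$-comprehension. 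We build an infinite ascending sequence by primitive recursion, letting $u_{k+1}$ be the least $u'$ with $(u_k,u')\in S$. Finding the least such $u'$ is unbounded search, so we use arithmetical comprehension to obtain the Skolem function; thus $\mathrm{ACA}_0$ suffices. The resulting sequence $u_0Ru_1Ru_2\cdots$ contradicts (1).

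\emph{$(2)\Rightarrow(1)$.} We argue by contraposition and treat the two failure modes separately. If $R$ is not transitive, pick $a,b,c$ with $aRb$, $bRc$ and $\neg aRc$. If $W$ has an infinite ascending sequence $f$, let $X=\mathrm{ran}(f)$; this is a set in $\mathrm{ACA}_0$ by Proposition~\ref{velraca} or directly.

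For the non-transitive case, let $v(w,p_0)=1$ iff $w\neq b$ and $w\neq c$, and extend $v$ by VEL to a valuation $V$. At $a$ we want $\Box\varphi$ to fail, and this is immediate since $aRb$ and $b$ falsifies $p_0$. We also want $\Box(\Box p_0\rightarrow p_0)$ to hold at $a$. The one troublesome successor is $b$, and there $\Box p_0$ fails because $bRc$. The difficulty is that other successors $u$ of $a$ with $uRb$ or $uRc$ might make $\Box p_0\rightarrow p_0$ fail. This is the step we expect to be the main obstacle. The textbook proof in \cite{MR1837791} takes $p$ to be false only at $b$ and uses the derivability of $4$ from $\mathbf{GL}$. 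We would therefore instead first derive $\Box p\rightarrow\Box\Box p$ as a frame-valid consequence of the $\mathbf{GL}$ schema. Concretely, we instantiate $\varphi:=p\land\Box p$. The semantic version of the usual derivation shows that (2) for this instance, at every world and for every valuation, yields $\Box p\rightarrow\Box\Box p$. We then refute $\Box p\rightarrow\Box\Box p$ at $a$ using the valuation $v(w,p)=1\iff aRw$, obtained by VEL. Since every step is a pointwise evaluation under a single $V$, it goes through in $\mathrm{RCA}_0$ once $V$ exists.

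For the ascending-sequence case, assume transitivity, which we may do after the previous case. Let $v(w,p_0)=0$ iff $w\in X$, and extend $v$ by VEL to $V$. Then $V(f(0),\Box p_0)=0$, because $f(0)Rf(1)\in X$. For any $u$ with $f(0)Ru$, either $u\notin X$, so $p_0$ holds at $u$, or $u=f(k)$; in the latter case $f(k)Rf(k+1)$ gives $V(u,\Box p_0)=0$. In both cases $V(u,\Box p_0\rightarrow p_0)=1$. Hence the instance of (2) with $\varphi=p_0$ fails at $f(0)$.
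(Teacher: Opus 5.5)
Your proof is correct and follows the paper's route. For $(1)\Rightarrow(2)$ you use the same successor-chasing construction, with transitivity keeping the chain inside $wR$. For $(2)\Rightarrow(1)$ you use the same case split: non-transitivity is refuted via the derivability of $\Box p\rightarrow\Box\Box p$, and the ill-founded case by making $p_0$ false exactly on the range of the sequence. In the non-transitive case the paper cites Theorem~\ref{fdga} for $\mathrm{Ga}(0,2,1,0;\varphi)$ plus soundness; your pointwise use of the instance $\varphi:=p\land\Box p$ at the same world is a more precise version of that step.

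As an aside, the obstacle you flag for the valuation with $p_0$ false only at $b$ and $c$ is not real. $\Box p_0\rightarrow p_0$ can fail at an $a$-successor only where $p_0$ fails, i.e.\ at $b$ (since $\neg aRc$), and there $bRc$ already falsifies $\Box p_0$. So that valuation refutes the $\mathbf{GL}$ instance at $a$ directly.
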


\begin{proof}
$(1)\Rightarrow(2)$: Let $F=(W, R)$ be a transitive frame such that $W$ has no infinite ascending sequences by $R$. Take a model $M=(W, R, V)$ and a modal formula $\varphi\in\mathrm{Fml}$. Fix $w_{0}\in W$ with $V(w_{0}, \Box(\Box\varphi\rightarrow\varphi))=1$. To derive a contradiction, we assume that $V(w_{0}, \neg\Box\varphi)=1$. Then, we define an infinite ascending sequence $\{w_{i}\}_{i}$ by $R$ such that for all $i$, $w_{0}Rw_{i+1}$ and $V(w_{i},\neg\Box\varphi)=1$ as follows: 

Assume that we have already defined $w_{i}$. Then there exists $w_{i+1}\in W$ such that $w_{i}Rw_{i+1}$ and $V(w_{i+1}, \neg\varphi)=1$. By transitivity of $R$, $w_{0}Rw_{i+1}$. By assumption, $V(w_{0}, \Box(\Box\varphi\rightarrow\varphi))=1$. Thus, $V(w_{i+1},\Box\varphi\rightarrow\varphi)=1$, so, $V(w_{i+1}, \neg\Box\varphi)=1$ holds. Then, we define the infinite ascending sequence $\{w_{i}\}_{i}$ by $R$. However, it is a contradiction since $W$ has no infinite ascending sequences by $R$. Thus, $V(w_{0}, \Box(\Box\varphi\rightarrow\varphi)\rightarrow\Box\varphi)=1$. 
\vspace{2mm}

$(2)\Rightarrow(1)$: Assume that $F$ is not transitive. By Theorem~\ref{fdga}, there exists a valuation $V$ on $(W, R)$, $w\in W$ and $\varphi\in\mathrm{Fml}$ such that $V(w, \mathrm{Ga}(0, 2, 1, 0;\varphi))=0$, that is, $V(w, \Box\varphi\rightarrow\Box\Box\varphi)=0$. By the soundness theorem for modal logic (see~\cite{10.1007/978-3-031-95908-0_32}), $V(w, \Box(\Box\varphi\rightarrow\varphi)\rightarrow\Box\varphi)=0$. 

Assume that $W$ has an infinite ascending sequence $\{w_{i}\}_{i}$ by $R$. Fix $p_{0}\in\mathrm{Prop}$. Then we define $v:W\times\mathrm{Prop}\rightarrow2$ by the following: for all $w\in W$,
\begin{align*}
&v(w, p_{0})=1\Leftrightarrow\forall i(w\neq w_{i}),\\
&v(w, p)=1,\, \text{for all $p\in\mathrm{Prop}\setminus\{p_{0}\}$}.
\end{align*}

Such a $v$ exists by arithmetical comprehension. By VEL (that is, $\mathrm{ACA}^{+}_0$), we have a valuation $V: W\times\mathrm{Fml}\rightarrow2$ on $(W, R)$ such that $V$ extends $v$. Then take $u\in W$ with $w_{0}Ru$. Assume that $V(u, \neg p_0)=1$. Then there exists $i$ such that $u=w_{i}$. By $uRw_{i+1}$ and $V(w_{i+1}, \neg p_0)=1$, $V(u, \neg\Box p_0)=1$. Thus, $V(u, \neg p_0\rightarrow\neg\Box p_0)=1$, so, $V(u, \Box p_0\rightarrow p_0)=1$. Then, we have $V(w_{0}, \Box(\Box p_0\rightarrow p_0))=1$. By $V(w_{1}, \neg p_0)=1$, $V(w_{0}, \neg\Box p_0)=1$. Thus, $V(w_{0}, \Box(\Box p_0\rightarrow p_0)\rightarrow\Box p_0)=0$. 
\end{proof}

\begin{cor}[Weak frame definability for $\mathbf{GL}$, $\mathrm{ACA}_0$]\label{wfdgl}
Let $F=(W, R)$ be a frame. Then the following statements are equivalent:
\begin{enumerate}
    \item $F$ is transitive and $W$ has no infinite ascending sequences by $R$,
    \item $(F, V)\Vdash\Box(\Box p\rightarrow p)\rightarrow\Box p$ for every $3$-restricted valuation $V$ on $F$ and $p\in\mathrm{Prop}$.
\end{enumerate}
\end{cor}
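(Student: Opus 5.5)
The plan is to redo the proof of Proposition~\ref{fdgl} inside $\mathrm{ACA}_0$, with every appeal to VEL replaced by Corollary~\ref{pxtendstandard}. That corollary supplies $k$-restricted valuations for standard $k$, and here $k=3$ is standard. The formula $\Box(\Box p\rightarrow p)\rightarrow\Box p$ has degree $2$. The formula $\Box p\rightarrow\Box\Box p$, which is used for transitivity, also has degree $2$. So $3$-restricted valuations assign values to every formula we need, with room to spare.

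\textbf{Direction $(1)\Rightarrow(2)$.} Fix a $3$-restricted valuation $V$ and a world $w_0$ with $V(w_0,\Box(\Box p\rightarrow p))=1$ and $V(w_0,\Box p)=0$. We build an infinite ascending sequence exactly as in Proposition~\ref{fdgl}. Given $w_i$ with $V(w_i,\neg\Box p)=1$, let $w_{i+1}$ be the least $u$ such that $w_iRu$ and $V(u,p)=0$. By transitivity $w_0Rw_{i+1}$, so $V(w_{i+1},\Box p\rightarrow p)=1$, and hence $V(w_{i+1},\Box p)=0$. The search for $w_{i+1}$ is computable in $R\oplus V$ and always succeeds. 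Therefore the sequence exists already by $\Delta^0_1$ primitive recursion (or by arithmetical comprehension), and its existence contradicts~(1).

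\textbf{Direction $(2)\Rightarrow(1)$, non-transitivity.} Suppose $aRb$, $bRc$ and $\neg aRc$. Define $v(w,p)=1\Leftrightarrow aRw$; this is $\Delta^0_1$ in $R$. Let $V$ be a $3$-restricted valuation extending $v$, which exists by Corollary~\ref{pxtendstandard}. Then $V(a,\Box p)=1$, while $V(b,\Box p)=0$ because $V(c,p)=0$. Hence $V(a,\Box p\rightarrow\Box\Box p)=0$.

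It remains to pass to the failure of the L\"ob formula. The soundness theorem cited in Proposition~\ref{fdgl} is stated for full valuations, so I would instead argue directly by substitution. Consider the instance $q:=p\wedge\Box p$. Suppose (2) held at $a$ for this instance; (2) as stated is only for atoms $p$, so we rename the atom via the valuation. That is, we define a new assignment $v'(w,p')=1\Leftrightarrow V(w,p\wedge\Box p)=1$. This is computable from $V$, and we extend $v'$ to a $3$-restricted $V'$. A routine induction on formulas of bounded degree shows that $V'(w,\psi(p'))=V(w,\psi(p\wedge\Box p))$ whenever the degrees fit. The needed degree is $2+1=3$, which is exactly why the statement uses $3$-restricted valuations. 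Then the standard $\mathbf{K}$-derivation of $4$ from $\mathrm{L}$, carried out semantically at $a$ with the finitely many formulas involved, yields a contradiction.

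\textbf{Direction $(2)\Rightarrow(1)$, well-foundedness.} Suppose there is an infinite ascending sequence $\{w_i\}$. Define $v(w,p)=1\Leftrightarrow\forall i\,(w\neq w_i)$ by arithmetical comprehension, and extend it to a $3$-restricted $V$. The calculation from Proposition~\ref{fdgl} then goes through verbatim, since it uses only formulas of degree at most $2$.

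\textbf{Main obstacle.} The hardest step is the transitivity case. Replacing the soundness theorem with a bounded-degree substitution argument requires checking that the substituted valuation agrees with $V$ up to degree $3$. This needs only a finite external induction over the standard finitely many subformulas, which $\mathrm{ACA}_0$ handles.
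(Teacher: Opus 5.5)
Your proposal is correct, and its overall strategy is the paper's. The paper's proof consists only of the remark that the argument for Proposition~\ref{fdgl} goes through in $\mathrm{ACA}_0$ once VEL is replaced by Corollary~\ref{pxtendstandard}. Your direction $(1)\Rightarrow(2)$ and your well-foundedness case are exactly that.

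The one genuine departure is the non-transitivity case. There the paper invokes Theorem~\ref{fdga} for $\mathrm{Ga}(0,2,1,0;\varphi)$ and then the soundness theorem to conclude $V(w,\Box(\Box\varphi\rightarrow\varphi)\rightarrow\Box\varphi)=0$. Your assignment $v(w,p)=1\Leftrightarrow aRw$ is the one from the proof of Theorem~\ref{fdga} for this instance. However, you replace the soundness step by the explicit substitution $q=p\wedge\Box p$, followed by renaming $q$ to a fresh atom $p'$ under a second $3$-restricted valuation $V'$.

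This buys something real. The cited soundness theorem is formulated for full valuations, which $\mathrm{ACA}_0$ does not provide. Even for full valuations, it only shows that \emph{some} instance of the L\"ob schema fails somewhere in the model. The standard derivation identifies this as the instance for $\varphi\wedge\Box\varphi$ at $w$, not the instance for $\varphi$ itself.

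That is harmless in Proposition~\ref{fdgl}, whose clause (2) ranges over all formulas. Clause (2) of the weak version, however, only concerns atoms, so the failing instance must be transferred to an atomic one. Your renaming step does exactly this, and it is where the degree bound $3$ is used. The paper's route is shorter but leaves this step implicit.

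You could also skip the intermediate $V$. Set $v'(w,p')=1\Leftrightarrow aRw\wedge\forall x\,(wRx\rightarrow aRx)$ directly; this exists by arithmetical comprehension. Then check by hand that $\Box(\Box p'\rightarrow p')\rightarrow\Box p'$ fails at $a$.
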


\begin{proof}
The part of the argument for Proposition~\ref{fdgl} that does not rely on VEL can be carried out within $\mathrm{ACA}_0$. On the other hand, for the part that requires VEL, Corollary~\ref{pxtendstandard} implies that it suffices to construct a $3$-restricted valuation over $\mathrm{ACA}_0$.
\end{proof}

Consequently, the relationship between VEL and frame definability can be summarized as follows.

\begin{thm}\label{aca+equiv}
The following statements are equivalent over $\mathrm{RCA}_0$:
\begin{enumerate}
    \item $\mathrm{ACA}^{+}_0$,
    \item The Valuation Extension Lemma,
    \item Frame definability for Geach axioms,
    \item Frame definability for $\mathbf{GL}$,
    \item For each frame $F=(W, R)$, there exists a valuation $V$ on $(W, R)$.
\end{enumerate}
\end{thm}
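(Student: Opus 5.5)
We prove the cycle of implications $(1)\Rightarrow(2)\Rightarrow(3),(4),(5)$ and $(3)\Rightarrow(1)$, $(4)\Rightarrow(1)$, $(5)\Rightarrow(1)$.

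\textbf{The forward implications.} $(1)\Rightarrow(2)$ is Theorem~\ref{pextend}. For $(2)\Rightarrow(3)$ and $(2)\Rightarrow(4)$, we observe that the only non-$\mathrm{RCA}_0$ ingredient in Theorem~\ref{fdga} and Proposition~\ref{fdgl} is the appeal to VEL. The assignment $v$ used in the proof of Proposition~\ref{fdgl} is defined by an arithmetical formula, and the assignment in Theorem~\ref{fdga} involves $\rightsquigarrow^m$, which is $\Sigma^0_1$. So we first note that VEL gives $\mathrm{ACA}_0$ by Proposition~\ref{velraca}, which suffices to form these assignments; we then apply VEL again. For $(2)\Rightarrow(5)$, we apply VEL to the constant assignment $v(w,p)=1$.

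\textbf{The reverse implications to $(1)$.} For $(5)\Rightarrow(1)$, we formalize Corollary~\ref{vcompomega}. First, $(5)$ yields $\mathrm{ACA}_0$ by the frame in Proposition~\ref{wwwvelaca}, which uses only a valuation of degree $\le 1$. Working in $\mathrm{ACA}_0$, given $X$, we build an $X$-computable frame that is a disjoint union over $k$ of the trees of Theorem~\ref{comframepr}. Component $k$ uses $W_k=\{k\}\times\mathbb{N}^{<2k+4}$ and the $\Sigma^0_{2k+1}$ universal formula relative to $X$. A valuation $V$ then defines
\[
Y=\{\langle k,n\rangle : V(\langle k,n,e_k\rangle,\Diamond(\Box\Diamond)^k\top)=1\}
\]
by $\Delta^0_1$-comprehension in $V$. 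We must show $Y$ satisfies the arithmetical definition of $X^{(\omega)}$, namely that each column $Y_{k}$ is $\mathrm{TJ}(Y_{k-1})$ (or an equivalent characterization). This is shown by proving, via arithmetical induction inside the argument of Proposition~\ref{boxneq}, that $V$ at level $k$ evaluates the corresponding $\Sigma^0_{2k+1}$ formula correctly. Here the index $e_k$ of a universal $\Sigma^0_{2k+1}$ formula must be chosen uniformly and primitive recursively in $k$.

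For $(3)\Rightarrow(1)$ and $(4)\Rightarrow(1)$, we reduce to $(5)$, or directly to an $\omega$-jump. Given a frame $F$, suppose the Geach condition fails at some $a,b,c$. Definability says that some valuation falsifies $\mathrm{Ga}$; since $F\Vdash\varphi$ quantifies over valuations, this gives existence of a valuation on $F$. To exploit this, given an arbitrary frame $F$ we form $F^\ast$, a disjoint union of $F$ with a small non-transitive gadget, for instance three points $a R b R c$ without $aRc$. $F^\ast$ violates transitivity, i.e.\ $\mathrm{Ga}(0,2,1,0)$. Clause $(3)$ then yields a valuation on $F^\ast$, and its restriction to $F$ is a valuation on $F$ by $\Delta^0_1$-comprehension. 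The same $F^\ast$ works for $(4)$, since it is non-transitive. Thus $(3)\Rightarrow(5)$ and $(4)\Rightarrow(5)$.

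The main obstacle is the formalization in $(5)\Rightarrow(1)$: verifying inside $\mathrm{ACA}_0$ that the column sets read off from $V$ are exactly the iterated jumps. This must hold uniformly in nonstandard $k$, so it cannot be a metatheoretic induction over formula complexity. I would handle it by proving, by arithmetical induction on $k$, that $Y_{k}=\mathrm{TJ}^{2k+1}$-style jump of $X$, expressed through the $\Sigma^0_1$ jump relation relative to the previous column. To make that induction go through, I would encode the frame so that each level performs one jump step relative to the previous $V$-definable column, rather than using a $\Sigma^0_{2k+1}$ formula directly.
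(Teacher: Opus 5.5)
Your decomposition matches the paper's everywhere except $(5)\Rightarrow(1)$. $(1)\Rightarrow(2)$ is Theorem~\ref{pextend}. $(2)\Rightarrow(3),(4)$ are Theorem~\ref{fdga} and Proposition~\ref{fdgl}, with $\mathrm{ACA}_0$ from Proposition~\ref{velraca} used to form the assignments. $(3),(4)\Rightarrow(5)$ adjoin a disjoint gadget on which the frame condition fails, so definability forces a valuation to exist. One minor point: the paper stipulates that Geach parameters satisfy neither $i=n=0$ nor $m=j=0$, which formally excludes $\mathrm{Ga}(0,2,1,0)$, even though the paper itself uses that instance in Proposition~\ref{fdgl}. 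The paper's chain gadget $x\to y_1\to\dots\to y_i$, $x\to z_1\to\dots\to z_j$ works for any admissible instance.

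The gap is in $(5)\Rightarrow(1)$, a step the paper also only asserts (``formalize Corollary~\ref{vcompomega}'').

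\textbf{Why your first design fails.} The clauses for $V$ on component $k$ never mention component $k-1$, so nothing links column $k$ to column $k-1$. Moreover, with the trees of Theorem~\ref{comframepr}, column $k$ is meant to be $X^{(2k+1)}$, not $\mathrm{TJ}(Y_{k-1})$. Finally, ``$V$ evaluates the $\Sigma^0_{2k+1}$ formula correctly'' is not expressible for nonstandard $k$ without the satisfaction predicate you are trying to obtain.

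\textbf{What your remedy still needs.} Letting level $k+1$ perform one jump relative to the $V$-definable level $k$ is the right idea, but it is the entire content of the step and you leave it unspecified. The difficulty is that (5) gives no control over atoms. So the oracle test $\sigma\prec Y_k$, which needs both $i\in Y_k$ and $i\notin Y_k$, must be realised by frame shape and variable-free formulas, with levels sharing worlds.

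\textbf{One construction that works.} Take a reflexive point $z$ and let every $w_{k,n}$ see $z$.
At level $0$, let $w_{0,n}$ also see a dead end iff $n\in X$, and put $\psi_0=\Diamond\Box\bot$.
At level $k+1$, let $w_{k+1,n}$ see a node $b_{k,n,\sigma}$ for each $0$-$1$ string $\sigma$ with $D(n,\sigma)$. Here $D$ is decidable and $n\in\mathrm{TJ}(Y)\leftrightarrow\exists\sigma\prec Y\,D(n,\sigma)$.
Let $b_{k,n,\sigma}$ see a dead end and, for each $i<lh(\sigma)$, its own test node $c$ with $cRw_{k,i}$. The node $c$ also sees a dead end iff $\sigma(i)=0$. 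Set
\[
\psi_{k+1}=\Diamond\bigl(\Diamond\Box\bot\land\Box((\Diamond\Box\bot\rightarrow\Box\neg\psi_k)\land(\neg\Diamond\Box\bot\rightarrow\Box\psi_k))\bigr).
\]
Neither $z$ nor any $w_{k,i}$ is a dead end, so $z$ and the positive test nodes fail $\Diamond\Box\bot$. Every $\psi_k$ fails at dead ends. Unwinding the clauses at a single level then gives $Y_0=X$ and $V(w_{k+1,n},\psi_{k+1})=1\leftrightarrow\exists\sigma\,(D(n,\sigma)\land\sigma\prec Y_k)$, where $Y_k=\{i : V(w_{k,i},\psi_k)=1\}$.

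This holds for every $k$ directly, with no induction on $k$. Hence $\{\langle k,n\rangle : V(w_{k,n},\psi_k)=1\}$ is $X^{(\omega)}$ by $\Delta^0_1$-comprehension in $V$, already over $\mathrm{RCA}_0$. Without a construction of this kind, your argument for $(5)\Rightarrow(1)$ is incomplete.
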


\begin{proof}
$(1)\Rightarrow(2)$: By Theorem~\ref{pextend}. 
\vspace{1mm}

$(2)\Rightarrow(3)$: By Theorem~\ref{fdga}. 
\vspace{1mm}

$(2)\Rightarrow(4)$: By Proposition~\ref{velraca} and Proposition~\ref{fdgl}.
\vspace{1mm}

$(3)\Rightarrow(5)$: Let $F=(W, R)$ be a frame. We may assume that $i, j>0$. Let $W^{\prime}=\{x, y_{1}, \dots, y_{i}, z_{1}, \dots, z_{j}\}$ be new worlds, and let $R^{\prime}\subseteq W^{\prime}\times W^{\prime}$ be a new relation as follows:
\begin{align*}
R^{\prime}=\{\langle{x, y_{1}\rangle}, \langle{y_{1}, y_{2}\rangle},\dots, \langle{y_{i-1}, y_{i}\rangle}, \langle{x, z_{1}\rangle}, \langle{z_{1}, z_{2}\rangle}, \dots, \langle{z_{j-1}, z_{j}\rangle}\}.
\end{align*}

Now, we define the new frame $F^{\star}=(W\sqcup W^{\prime}, R\sqcup R^{\prime})$. Then there exists $x, y_{i}, z_{j}$ such that $x\rightsquigarrow^{i}y_{i}$ and $x\rightsquigarrow^{j}z_{j}$ and for all $u\in W$, $y_{i}\not\rightsquigarrow^{m}u$ or $z_{j}\not\rightsquigarrow^{n}u$. Thus, by frame definability for Geach axioms, there exists a valuation $V^{\star}$ on $F^{\star}$. Then, $V=V^{\star}\upharpoonright(W\times\mathrm{Fml})$ is a valuation on $(W, R)$.
\vspace{1mm}

$(4)\Rightarrow(5)$: This can be shown by the same method used for the implication $(3)\Rightarrow(5)$.
\vspace{1mm}

$(5)\Rightarrow(1)$: This can be shown by formalizing the proof of Corollary~\ref{vcompomega} in $\mathrm{RCA}_0$ (or $\mathrm{ACA}_0$).
\end{proof}

\begin{thm}\label{acaprimerveldouchi}
The following statements are equivalent over $\mathrm{RCA}_0$:
\begin{enumerate}
    \item $\mathrm{ACA}^{\prime}_0$,
    \item The Restricted Valuation Extension Lemma,
    \item Weak frame definability for Geach axioms,
    \item For each frame $F=(W, R)$ and $k\in\mathbb{N}$, there exists a $k$-restricted valuation $V$ on $(W, R)$.
\end{enumerate}
\end{thm}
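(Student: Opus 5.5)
The plan is to prove the cycle $(1)\Rightarrow(2)\Rightarrow(3)\Rightarrow(4)\Rightarrow(1)$, following the pattern of Theorem~\ref{aca+equiv}.

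\textbf{$(1)\Rightarrow(2)$ and $(2)\Rightarrow(3)$.} The first implication is Corollary~\ref{pextend2}. Given $k$, work in $\mathrm{ACA}^{\prime}_0$ and take $(F\oplus v)^{(k)}$. Run the bottom-up construction from the proof of Theorem~\ref{pextend} for only $k$ steps, producing a finite sequence of indices $e_0,\dots,e_k$. This needs only arithmetical induction relative to $(F\oplus v)^{(k)}$, which is available because that set exists. The second implication is Corollary~\ref{wfdgeach}. The direction ``frame condition $\Rightarrow$ validity'' is Proposition~\ref{fdeasy}, which is valid in $\mathrm{RCA}_0$, applied to $k$-restricted valuations; note that every formula involved has degree $\le k$. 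The converse uses the assignment $v$ built in Theorem~\ref{fdga}. That $v$ is $\Sigma^0_1$ in $F$, so to define it we first need $\mathrm{ACA}_0$. We get $\mathrm{ACA}_0$ from Restricted VEL by the argument of Proposition~\ref{velraca}, which only uses $1$-restricted valuations. Then Restricted VEL yields the $k$-restricted extension.

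\textbf{$(3)\Rightarrow(4)$.} Given $F$ and $k$, choose Geach parameters with $i,j>0$ and $i+j+m+n\ge k$, for example $i=j=k$ and $m=n=1$. Attach the disjoint fork $W^{\prime}$ from the proof of Theorem~\ref{aca+equiv}. On $F^\star$ the Geach condition fails, so weak frame definability supplies a $(i+j+m+n)$-restricted valuation on $F^\star$. Restricting it to $W\times\mathrm{Fml}[k]$ gives a $k$-restricted valuation on $F$. This restriction is still a valuation because $W$ is $R$-closed in $F^\star$.

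\textbf{$(4)\Rightarrow(1)$.} This is the main step. First apply Proposition~\ref{wwwvelaca} to obtain $\mathrm{ACA}_0$. Then, given $X$ and $n$, formalize Theorem~\ref{comframepr} and Corollary~\ref{2k+1resvalu} uniformly in a nonstandard-capable way, taking $k$ with $2k+1\ge n$. The frame is $W=\mathbb{N}^{<2k+4}$ with the $\Sigma^0_{2k+1}$ tree relation defined from a universal bounded matrix. A $(2k+1)$-restricted valuation $V$ then gives
\[
\{\langle n,e\rangle : V(\langle n,e\rangle,\Diamond(\Box\Diamond)^k\top)=1\}.
\]
The obstacle is that for nonstandard $k$ we cannot use a $\Sigma^0_{2k+1}$ formula inside the theory. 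Instead I would prove, by $\Sigma^0_1$ (in fact $\Delta^0_1$-in-$V$) induction on $j\le 2k+2$, a statement about nodes at depth $2k+2-j$. The claim is that the value $V(\sigma,\Diamond(\Box\Diamond)^{\cdot}\top)$ at depth $2k+2-j$ matches the $j$-quantifier truth definition relative to the sets previously extracted from $V$. In this way one obtains, simultaneously for all $j\le 2k+1$, sets $Y_j\le_T V$ satisfying the defining recursion $Y_0=X$ and $Y_{j+1}=\mathrm{TJ}(Y_j)$. This formalization uses the strategy already employed for Proposition~\ref{boxneq}, namely induction on the modal depth with $V$ as a parameter, so each instance is $\Delta^0_1$ in $V$. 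The result is a single set coding $X^{(n)}$, which gives $\mathrm{ACA}^{\prime}_0$.
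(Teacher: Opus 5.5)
You follow the same cycle as the paper, and your $(1)\Rightarrow(2)$, $(2)\Rightarrow(3)$ and $(3)\Rightarrow(4)$ are correct. In two places you are more careful than the paper:
\begin{itemize}
\item You extract $\mathrm{ACA}_0$ from $1$-restricted valuations before defining the assignment $v(w,p_0)=1\Leftrightarrow b\rightsquigarrow^{m}w$ of Theorem~\ref{fdga}, which is only $\Sigma^0_1$ in $F$.
\item You choose Geach parameters with $i+j+m+n\ge k$, so that the fork really yields a $k$-restricted valuation.
\end{itemize}

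\textbf{The gap is in $(4)\Rightarrow(1)$.} The paper only cites Corollary~\ref{2k+1resvalu}, which concerns standard $k$, so you are right that nonstandard $n$ needs more. But your sketch asserts the decisive step rather than proving it.

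On the frame of Theorem~\ref{comframepr}, the valuation clauses give you only the following. The set of nodes of length $2k+2-j$ satisfying the relevant formula is obtained from the set at length $2k+3-j$ by one number quantifier over the last coordinate, with all outer coordinates fixed. Moreover, $X$ enters only through the universal matrix at the leaves. This is immediate and needs no induction, but it is not the recursion $Y_{j+1}=\mathrm{TJ}(Y_j)$. Membership $x\in\mathrm{TJ}(Y_j)$ asks for a convergent computation together with a string $\tau$, possibly of nonstandard length, that records both positive and negative facts about $Y_j$.

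To capture this at the next depth, you must prove, uniformly in nonstandard $j$, that the level sets respect the normal-form manipulations that the metatheoretic proof performs silently:
\begin{itemize}
\item pushing bounded quantifiers through via $\mathrm{B}\Sigma^0_1$ relative to a level set;
\item merging a $\Sigma$ and a $\Pi$ condition into one prenex formula;
\item finding suitable codes $e$ via universality of $\theta$.
\end{itemize}
Only then do you get uniform many-one reductions of $\mathrm{TJ}(Y_j)$ to the next level set. This is an induction along $j$ on a statement arithmetical in $V$. It is available, since you already have $\mathrm{ACA}_0$, but it is not a $\Delta^0_1$ induction, and it is the real content of the step.

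\textbf{A simpler repair} is to change the frame so that each level performs exactly one jump.
\begin{itemize}
\item Take nodes $d^j_y$ for $j\le n$, with the bottom level coding $X$ (e.g.\ via $\chi_0=\Diamond\Diamond\top$).
\item Give $d^{j+1}_x$ a successor $b_{x,\tau,s}$ for each $(\tau,s)$ with $\Phi^{\tau}_{x,s}(x)\downarrow$.
\item Give $b_{x,\tau,s}$ successors $c_y$ for $y<|\tau|$. Each $c_y$ points to $d^j_y$ and has an extra dead-end successor exactly when $\tau(y)=1$.
\item Arrange that no $d$-node is a dead end, e.g.\ by a dummy successor on which $\Box(\Diamond\Box\bot\leftrightarrow\Diamond\chi_j)$ fails.
\end{itemize}
Now set $\chi_{j+1}=\Diamond\Box(\Diamond\Box\bot\leftrightarrow\Diamond\chi_j)$ and $Y_j=\{y : V(d^j_y,\chi_j)=1\}$. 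Every $\chi_j$ begins with $\Diamond$, so it fails at dead ends. Hence at $c_y$ the marker test detects $\tau(y)=1$, and $\Diamond\chi_j$ holds iff $y\in Y_j$. Each equation $Y_{j+1}=\mathrm{TJ}(Y_j)$, for $j<n$, then follows directly from the clauses for $\Box$ and $\rightarrow$, simultaneously and with no induction. Finally, $\mathrm{deg}(\chi_n)$ is linear in $n$, so one restricted valuation of suitable degree suffices.
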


\begin{proof}
$(1)\Rightarrow(2)$: By Corollary~\ref{pextend2}. 
\vspace{1mm}

$(2)\Rightarrow(3)$: By Corollary~\ref{wfdgeach}.
\vspace{1mm}

$(3)\Rightarrow(4)$: This can be shown by the implication $(3)\Rightarrow(5)$ of Theorem~\ref{aca+equiv}. 
\vspace{1mm}

$(4)\Rightarrow(1)$: By Corollary~\ref{2k+1resvalu}. 
\end{proof}

\begin{thm}
The following statements are equivalent over $\mathrm{RCA}_0$:
\begin{enumerate}
    \item $\mathrm{ACA}_0$,
    \item Weak frame definability for $\mathbf{GL}$,
    \item For each frame $F=(W, R)$, there exists a $1$-restricted valuation $V$ on $(W, R)$.
\end{enumerate}
\end{thm}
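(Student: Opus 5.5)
I will prove the cycle $(1)\Rightarrow(2)\Rightarrow(3)\Rightarrow(1)$, modelled on the proofs of Theorems~\ref{aca+equiv} and~\ref{acaprimerveldouchi}.

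\textbf{$(1)\Rightarrow(2)$.} This is exactly Corollary~\ref{wfdgl}. It gives weak frame definability for $\mathbf{GL}$ in $\mathrm{ACA}_0$, using Corollary~\ref{pxtendstandard} to produce the needed $3$-restricted valuations. No further work is required beyond citing it.

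\textbf{$(2)\Rightarrow(3)$.} I plan to copy the gadget argument from $(3)\Rightarrow(5)$ of Theorem~\ref{aca+equiv}.
\begin{enumerate}
\item Given a frame $F=(W,R)$, form $F^{\star}=(W\sqcup W',R\sqcup R')$, where $W'=\{x,y_0,y_1,\dots\}$ and $R'$ is the infinite chain $x R' y_0 R' y_1 R'\cdots$, made transitive. This is definable in $\mathrm{RCA}_0$.
\item Then $F^{\star}$ fails clause (1) of weak frame definability for $\mathbf{GL}$, since it has an infinite ascending sequence.
\item By (2), read contrapositively, there is a $3$-restricted valuation $V^{\star}$ on $F^{\star}$ (and some $p$) refuting the $\mathbf{GL}$ axiom.
\item Because $W$ and $W'$ are disconnected, the restriction $V=V^{\star}\upharpoonright(W\times\mathrm{Fml}[1])$ satisfies the valuation clauses on $(W,R)$. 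This gives a $1$-restricted valuation on $F$.
\end{enumerate}
The main point to check is the contrapositive reading: (2) asserts the equivalence for every frame, so the failure of (1) yields the failure of the second clause, that is, the existence of some $3$-restricted valuation falsifying the formula. A falsifying valuation in particular exists, so the extraction goes through in $\mathrm{RCA}_0$.

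\textbf{$(3)\Rightarrow(1)$.} This is Proposition~\ref{wwwvelaca}, which shows that the existence of $1$-restricted valuations on every frame yields ranges of injective functions. Proposition~\ref{acaeqijf} then gives $\mathrm{ACA}_0$.

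\textbf{Main obstacle.} I expect the only delicate step to be in $(2)\Rightarrow(3)$. One must verify that restricting $V^{\star}$ to $W$ preserves the $\Box$-clause at depth $\le 1$. This holds because, for $w\in W$, the $R^{\star}$-successors of $w$ are exactly its $R$-successors in $W$, and the check is a $\Delta^0_1$ verification.
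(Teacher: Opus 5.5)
Your proof is correct and is essentially the paper's own: $(1)\Rightarrow(2)$ by Corollary~\ref{wfdgl}, $(3)\Rightarrow(1)$ by Proposition~\ref{wwwvelaca}, and $(2)\Rightarrow(3)$ by gluing on a disjoint gadget that violates clause (1) and then restricting the resulting falsifying $3$-restricted valuation to $W\times\mathrm{Fml}[1]$. The paper gets this last step by pointing to the $(4)\Rightarrow(5)$ step of Theorem~\ref{aca+equiv}; your infinite $R$-chain is simply an explicit choice of gadget (a non-transitive three-point path or a single reflexive point would serve equally well).
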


\begin{proof}
$(1)\Rightarrow(2)$: By Corollary~\ref{wfdgl}. 
\vspace{1mm}

$(2)\Rightarrow(3)$: This can be shown by the implication $(4)\Rightarrow(5)$ of Theorem~\ref{aca+equiv}. 
\vspace{1mm}

$(3)\Rightarrow(1)$: By Proposition~\ref{wwwvelaca}. 
\end{proof}

Moreover, if a frame is image-finite, we obtain the following result.

\begin{thm}
The following statements are equivalent over $\mathrm{RCA}_0$:
\begin{enumerate}
    \item $\mathrm{ACA}_0$,
    \item The Valuation Extension Lemma for image-finite frames,
    \item Frame definability for Geach axioms on image-finite frames,
    \item Frame definability for $\mathbf{GL}$ on image-finite frames,
    \item For each image-finite frame $F=(W, R)$, there exists a valuation $V$ on $(W, R)$.
\end{enumerate}
\end{thm}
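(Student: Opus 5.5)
We prove the cycle $(1)\Rightarrow(2)\Rightarrow(3)$, $(2)\Rightarrow(4)$, $(3)\Rightarrow(5)$, $(4)\Rightarrow(5)$, $(5)\Rightarrow(1)$, following the pattern of Theorem~\ref{aca+equiv}.

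\textbf{The implication $(1)\Rightarrow(2)$.} This is the only genuinely new step. Given an image-finite frame $F=(W,R)$ and $v$, image-finiteness by itself only says that each $wR$ is finite as a set. In $\mathrm{ACA}_0$, however, arithmetical comprehension yields a function $g$ with $g(w)$ a canonical code of the finite set $wR$: the statement ``$s$ codes $wR$'' is arithmetical (indeed $\Pi^0_1$ relative to $R$), so we can take the least such $s$. Relative to $F\oplus v\oplus g$ the $\Box$-clause becomes a bounded quantifier: $V(w,\Box\varphi)=1$ iff $V(u,\varphi)=1$ for all $u\in g(w)$. Hence we can define $V$ by primitive recursion on formula codes (subformulas have smaller codes), simultaneously for all $w$. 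This gives a set $\Delta^0_1$ in $F\oplus v\oplus g$, which exists by $\Delta^0_1$-comprehension. The valuation clauses hold by construction, verified by $\Sigma^0_1$-induction (the recursion is total and uniquely determined). The main obstacle is purely this bookkeeping: one must use the code $g(w)$ rather than the mere finiteness of $wR$, so that the recursion is arithmetically bounded. Without $g$ one would again need iterated jumps.

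\textbf{The implications $(2)\Rightarrow(3)$ and $(2)\Rightarrow(4)$.} Here we repeat the proofs of Theorem~\ref{fdga} and Proposition~\ref{fdgl}, invoking VEL for image-finite frames in place of VEL, since the frame is unchanged. The side facts used in the $\mathbf{GL}$ direction require arithmetical comprehension: defining $v$ from the ascending sequence, and the soundness/transitivity case that goes through Theorem~\ref{fdga}. For this we note that (2) implies $\mathrm{ACA}_0$. Proposition~\ref{wwwvelaca}'s frame $R=\{\langle n,m\rangle\mid f(m)=n\}$ with $f$ injective is image-finite, since each $nR$ has at most one element. Hence the argument there shows that (5), and a fortiori (2), implies $\mathrm{ACA}_0$.

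\textbf{The implications $(3)\Rightarrow(5)$ and $(4)\Rightarrow(5)$.} These go exactly as in Theorem~\ref{aca+equiv}. The gadget frame $W'$ added there is finite, so $F^\star=F\sqcup(W',R')$ remains image-finite. For $\mathbf{GL}$ we adjoin a finite non-transitive gadget, for instance $a\,R\,b\,R\,c$ with no $a\,R\,c$, which is also image-finite. Frame definability then forces a valuation on $F^\star$ to exist, and its restriction to $W\times\mathrm{Fml}$ is a valuation on $F$.

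\textbf{The implication $(5)\Rightarrow(1)$.} This is Proposition~\ref{wwwvelaca}, applied to the image-finite frame noted above together with Proposition~\ref{acaeqijf}.
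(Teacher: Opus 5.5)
Your proposal is correct and takes essentially the same route as the paper. The only substantive step is $(1)\Rightarrow(2)$, where arithmetical comprehension is used once to make the finite successor sets uniformly available. You take a code function $g$ for $wR$ and define $V$ directly as a $\Delta^0_1(F\oplus v\oplus g)$ set; the paper instead takes the $\Pi^0_1$ bound set $A$ and reruns the stagewise index construction of Theorem~\ref{pextend}. The remaining implications are transferred from Theorem~\ref{aca+equiv}. You also explicitly check that the reversal frame of Proposition~\ref{wwwvelaca} and the finite gadgets are image-finite, which the paper leaves implicit: its appeal to Proposition~\ref{velraca} (via Theorem~\ref{aca+equiv}) for getting $\mathrm{ACA}_0$ from (2) uses the complete frame $W\times W$, which is not image-finite, so your route is the appropriate one here.
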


\begin{proof}
$(1)\Rightarrow(2)$: Let $F=(W, R)$ be an image-finite frame and an assignment $v:W\times\mathrm{Prop}\rightarrow2$. We enumerate elements of $W$ as follows: $W=\{w_{i}\}_{i}$. We define $A\subseteq\mathbb{N}$ as follows: for all $m$ and $w\in W$,
\begin{align*}
\langle{w, m\rangle}\in A:\Leftrightarrow\forall i(i>m\rightarrow \neg(wRw_{i})).
\end{align*}  
Such an $A$ exists by arithmetical comprehension. For each $w\in W$, we let $A_{w}=\{m\mid\langle{w,m\rangle}\in A\}$. Since $F$ is image-finite, $A_{w}$ is not empty for each $w\in W$. 
\vspace{1mm}

Then, we construct a valuation $V: W \times \mathrm{Fml} \to 2$ extending $v$ in a bottom-up manner, using $F\oplus A\oplus v$ as an oracle. The construction is similar to that in Theorem~\ref{pextend}. Step $0$ is carried out in the same way as in Theorem~\ref{pextend}; we describe Step $k$.
\vspace{2mm}

Step $k$: We assume that $V_{k-1}: W \times \mathrm{Fml}[k-1] \to 2$ is defined and $e_{k-1}$ is an index of $V_{k-1}$. Fix $w\in W$ and $\varphi\in\mathrm{Fml}[k-1]$. Then, using $A$ as an oracle, we search for the least element of $A_{w}$, say $\langle{w, m\rangle}$, and find all elements of $W$ bounded by $m$ that belong to $wR$. Let $wR=\{w_{i_{0}}, \dots, w_{i_{s}}\}$ $(i_{s}\leq m)$. We then determine whether $\forall j\leq s(V_{k-1}(w_{i_{j}}, \varphi)=1)$ holds, and set $V_{k}(w, \Box\varphi)=1$ if this is the case, and $0$ otherwise. Using $F \oplus A\oplus v$ as an oracle, we compute an index $e_{k}$ for the $k$-restricted valuation $V_{k} : W \times \mathrm{Fml}[k] \to 2$ defined in this way.
\vspace{1mm}

This construction can be carried out in $\mathrm{ACA}_0$ using $F \oplus A\oplus v$ as an oracle. In this construction, we have the indices $E=\{e_{k}\}_{k}$. The rest is the same as in Theorem~\ref{pextend}.
\vspace{1mm}

$(2)\Rightarrow(3)$: This can be shown by the implication $(2)\Rightarrow(3)$ of Theorem~\ref{aca+equiv}. 
\vspace{1mm}

$(2)\Rightarrow(4)$: This can be shown by the implication $(2)\Rightarrow(4)$ of Theorem~\ref{aca+equiv}. 
\vspace{1mm}

$(3)\Rightarrow(5)$: This can be shown by the implication $(3)\Rightarrow(5)$ of Theorem~\ref{aca+equiv}. 
\vspace{1mm}

$(4)\Rightarrow(5)$: This can be shown by the implication $(4)\Rightarrow(5)$ of Theorem~\ref{aca+equiv}. 
\vspace{1mm}

$(5)\Rightarrow(1)$: By Proposition~\ref{acaeqijf}, it suffices to show that for an injective function $f$, there exists a range set of $f$. We take an injective function $f:\mathbb{N}\rightarrow\mathbb{N}$. Let $W=\mathbb{N}$ and $R=\{\langle{n, m\rangle}\mid f(m)=n\}$. Since $f$ is injective, $F=(W, R)$ is image-finite. The remainder of the proof follows by an argument similar to that of Proposition~\ref{wwwvelaca}.
\end{proof}

\begin{cor}
The following statements are provable in $\mathrm{RCA}_0$:
\begin{enumerate}
    \item The Valuation Extension Lemma for finite frames,
    \item Frame definability for Geach axioms on finite frames,
    \item Frame definability for $\mathbf{GL}$ on finite frames,
    \item For each finite frame $F=(W, R)$, there exists a valuation $V$ on $(W, R)$.
\end{enumerate}
\end{cor}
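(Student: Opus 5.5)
The plan is to prove item (1) directly in $\mathrm{RCA}_0$ and then derive (2)–(4) from it. Those later items follow by re-running the earlier implications, all of which are already carried out in $\mathrm{RCA}_0$ once a valuation is available. The only place $\mathrm{ACA}^{+}_0$ (or $\mathrm{ACA}_0$) was used earlier is to obtain the valuation itself. On a finite frame every quantifier over $W$ becomes a bounded quantifier, so the whole construction drops to $\Delta^0_1$ comprehension.

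For (1), I fix a finite frame $F=(W,R)$ with $W$ coded as a finite set, say $W\subseteq\{0,\dots,N\}$, together with $v:W\times\mathrm{Prop}\rightarrow2$. I then define $V(w,\varphi)$ by primitive recursion on the code of $\varphi$, simultaneously for all $w\in W$. Concretely, I define a function $G(\varphi)$ whose value is the finite sequence $\langle V(w,\varphi)\rangle_{w\le N}$ (with $0$ for $w\notin W$), by course-of-values recursion:
\begin{itemize}
\item $G(p)=\langle v(w,p)\rangle_{w}$;
\item $G(\bot)=\langle 0\rangle_{w}$;
\item $G(\varphi\rightarrow\psi)$ is computed pointwise from $G(\varphi)$ and $G(\psi)$;
\item $G(\Box\varphi)(w)=1$ iff $(\forall u\le N)(u\in W\wedge wRu\rightarrow G(\varphi)(u)=1)$.
\end{itemize}
The key point is the $\Box$ clause. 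It is a bounded quantifier over the finite code of $W$, so each step of the recursion is computable from $F\oplus v$ and the previously computed values. Since $\mathrm{RCA}_0$ proves the existence of functions defined by primitive recursion relative to given sets, $G$ exists. I then set $V=\{\langle w,\varphi,i\rangle\mid G(\varphi)(w)=i\}$, which exists by $\Delta^0_1$ comprehension.

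Next I verify the clauses of a Kripke model, including the $\Box$ clause (6), and check that $V$ extends $v$. Each clause holds at once from the defining equations of $G$. The only delicate point is that the unbounded quantifier $\forall v\in W$ in clause (6) is equivalent to the bounded one because $W\subseteq\{0,\dots,N\}$. I regard this as the main, though mild, obstacle: I must say precisely what ``finite frame'' means in $\mathrm{RCA}_0$ (a bound $N$ on $W$ is given) so that the recursion is genuinely primitive recursive. Item (4) is then immediate from (1), taking for instance $v\equiv 0$.

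For (2) and (3), the direction from frame conditions to validity is Proposition~\ref{fdeasy} and the $\mathrm{RCA}_0$ half of Proposition~\ref{fdgl}. For finite transitive frames, the absence of infinite ascending chains in (3) must be read as irreflexivity, or handled with a bounded search. For the converse directions, I repeat the proofs of Theorem~\ref{fdga} and Proposition~\ref{fdgl}, replacing the appeal to VEL by (1). The assignments $v(w,p_0)=1\Leftrightarrow b\rightsquigarrow^{m}w$ and $v(w,p_0)=1\Leftrightarrow\forall i(w\neq w_i)$ are $\Delta^0_1$ on a finite frame. In the first, paths of length $m$ range over finitely many sequences. In the second, an infinite ascending sequence in a finite $W$ repeats, so its range is a bounded, computable finite set. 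With these assignments the arguments go through verbatim in $\mathrm{RCA}_0$.
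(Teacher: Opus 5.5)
Your proposal is correct and follows the route the paper implicitly intends. The paper gives no explicit proof; it places the corollary right after the image-finite theorem, so the intended argument is that on a finite frame the $\Box$-clause reduces to a bounded check. The bottom-up valuation therefore exists in $\mathrm{RCA}_0$, and the frame-definability arguments rerun with the needed assignments obtained by bounded search; for the $\mathbf{GL}$ assignment this is really bounded $\Sigma^0_1$-comprehension, which $\mathrm{RCA}_0$ proves, rather than the repetition of the sequence. Your direct course-of-values recursion on formula codes is just a cleaner packaging of the paper's degree-by-degree index construction.
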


\subsection{Frame definability for modal predicate logic}
Next, we define variable-domain Kripke models for (first-order) modal predicate logic. The language $\mathcal{L}_{\mathrm{MP}}$ consists of the modal operator $\Box$ and the standard first-order symbols, including countably many relation symbols, but no function symbols. Suppose that $\mathcal{L}_{\mathrm{MP}}$ contains countably many unary predicates $P_{0}(x), P_{1}(x), \dots$. We identify terms and formulas with their G\"odel numbers under a fixed primitive recursive coding. Then let $\mathrm{Atm}_{\mathrm{MP}}$, $\mathrm{Fml}_{\mathrm{MP}}$, and $\mathrm{Snt}_{\mathrm{MP}}$ be the sets of $\mathcal{L}_{\mathrm{MP}}$-atomic formulas, $\mathcal{L}_{\mathrm{MP}}$-formulas, and $\mathcal{L}_{\mathrm{MP}}$-sentences, respectively. Our formalization of the semantics over $\mathrm{RCA}_0$ is based on~\cite{MR3558757, MR2517689, MR2010178}.

\begin{dfn}[$\mathrm{RCA}_0$~\cite{MR3558757, MR2517689, MR2010178}]
A \emph{variable-domain Kripke model for modal predicate logic} is a tuple $\mathcal{M}=(W, R, D, V)$ satisfying the following conditions:
\begin{itemize}
    \item $W\subseteq\mathbb{N}$ is a non-empty set,
    \item $R$ is a binary relation on $W$, i.e., $R\subseteq W\times W$,
    \item $D\subseteq W\times\mathbb{N}$ is a non-empty set,
    \item For any $w\in W$, $D_{w}=\{d\in\mathbb{N}\mid\langle{w, d\rangle}\in D\}$ is a non-empty set,
    \item $T_M$ and $S_M$ are respectively the sets of closed terms and sentences of the expanded language $\mathcal{L}^{M}_{\mathrm{MP}}=\mathcal{L}_{\mathrm{MP}}\cup\{\overline{d}\mid d\in\mathbb{N}\}$ with new constant symbols $\overline{d}$ for each element $\{\overline{d}\mid d\in\mathbb{N}\}$.
    \item $A_M$ is the set of atomic formulas of the expanded language $\mathcal{L}^{M}_{\mathrm{MP}}$,
    \item %$V$ is a function which assigns a truth value to each pair of a propositional formula and an element of $W$, i.e., 
$V : W\times(T_{M}\cup S_{M})\rightarrow\mathbb{N}\cup\{0, 1\}$ satisfies the following conditions:
\begin{itemize}
    \item $V(w, t)\in D_{w}$ for any $w\in W$ and $t\in T_M$,
    \item $V(w,\varphi)\in\{0, 1\}$ for any $w\in W$ and $\varphi\in S_M$,
    \item $\bigl(\forall i\leq n(V(w, t_i)=V(w, t^{\prime}_i))\rightarrow V(w, R(t_1,\dots,t_n))=V(w, R(t^{\prime}_1,\dots,t^{\prime}_n))\bigr)$, \\where $R$ is a relation symbol, for any $w\in W$ and $t_1,  t^{\prime}_1, \dots  t_n, t^{\prime}_n\in T_M$,
    \item $V(w, \bot)=0$ for any $w\in W$,
    \item $V(w, \varphi\rightarrow\psi)=1-V(w, \varphi)(1-V(w, \psi))$ for any $w\in W$ and $\varphi, \psi\in S_{M}$,
   \item $V(w, \Box\varphi)=1\iff\forall v\in W(wRv\rightarrow V(v, \varphi)=1)$ for any $w\in W$ and $\varphi\in S_{M}$,
    \item $V(w, \forall x\varphi(x))=1\iff\forall d\in D_{w}(V(w, \varphi(\overline{d}))=1)$ for any $w\in W$ and $\varphi(\overline{d})\in S_{M}$. 
\end{itemize}
\end{itemize}
A tuple $(W, R, D)$ is called a \emph{frame for modal predicate logic}, and a function $V$ is called a \emph{valuation for modal predicate logic} on $(W, R, D)$. We often identify $d\in D_{w}$ with $\overline{d}$ for each $w\in W$.
\end{dfn}

We also use the following notation.
Let $\mathcal{F}$ be a frame for modal predicate logic and let $\varphi\in\mathrm{Snt}_{\mathrm{MP}}$. We define
\begin{align*}
%&M, w\Vdash\varphi\equiv V(w, \varphi)=1, \quad \quad M\Vdash\varphi\equiv(\forall w\in W)(V(w, \varphi)=1),\\
\mathcal{F}\Vdash\varphi\equiv(\forall V\text{: valuation for modal predicate logic on $\mathcal{F}$})(\forall w\in W)(V(w, \varphi)=1).
\end{align*}

As in the case of modal propositional logic, VEL for modal predicate logic can also be proved in $\mathrm{ACA}^{+}_0$.

\begin{prop}[The Valuation Extension Lemma for modal predicate logic, $\mathrm{ACA}^{+}_0$]\label{velmplaca*}
For each frame for modal predicate logic $\mathcal{F}=(W, R, D)$ and $v: W\times A_{M}\rightarrow2$, there exists a valuation $V$ for modal predicate logic on $\mathcal{F}$ such that $V$ is an extension of $v$.
\end{prop}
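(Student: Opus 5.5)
We adapt the bottom-up construction from the proof of Theorem~\ref{pextend}. Now the stratification must count both modal operators and quantifiers, because a quantifier over a possibly infinite domain $D_w$ also costs one jump. We define a complexity measure $\mathrm{qd}:S_M\rightarrow\mathbb{N}$ by primitive recursion:
\begin{itemize}
    \item $\mathrm{qd}$ is $0$ on atomic sentences and on $\bot$;
    \item $\mathrm{qd}(\varphi\rightarrow\psi)=\max\{\mathrm{qd}(\varphi),\mathrm{qd}(\psi)\}$;
    \item $\mathrm{qd}(\Box\varphi)=1+\mathrm{qd}(\varphi)$;
    \item $\mathrm{qd}(\forall x\varphi(x))=1+\mathrm{qd}(\varphi(\overline{0}))$.
\end{itemize}
Substituting one constant for another does not change $\mathrm{qd}$, so $\mathrm{qd}(\varphi(\overline{d}))=\mathrm{qd}(\varphi(\overline{0}))$ for every $d$. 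Let $S_M[k]$ be the set of sentences of measure at most $k$. The terms are handled separately. Since the language has no function symbols, $T_M$ consists of the constants $\overline{d}$ and perhaps constant symbols of $\mathcal{L}_{\mathrm{MP}}$. We assume $v$ comes with (or we fix) a term-assignment $V(w,\overline{d})=d$ for $d\in D_w$, with $v$ supplying the values of the other closed terms in $D_w$. All of this is computable from $\mathcal{F}\oplus v$. The congruence condition on atomic formulas is a hypothesis on $v$ and is inherited by $V$.

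\emph{Step $0$.} As in Theorem~\ref{pextend}, we define $V_0$ on $W\times S_M[0]$ computably in $\mathcal{F}\oplus v$ by recursion on the propositional structure, and we fix an index $e_0$.

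\emph{Step $k$.} Given an index $e_{k-1}$ for $V_{k-1}$ on $W\times S_M[k-1]$, we use $(\mathcal{F}\oplus v)^{(k)}$ to decide the two kinds of new clauses:
\begin{itemize}
    \item $V_k(w,\Box\varphi)=1$ iff $\forall u\in W(wRu\rightarrow V_{k-1}(u,\varphi)=1)$;
    \item $V_k(w,\forall x\varphi(x))=1$ iff $\forall d(\langle w,d\rangle\in D\rightarrow V_{k-1}(w,\varphi(\overline{d}))=1)$.
\end{itemize}
Both conditions are $\Pi^0_1$ relative to $V_{k-1}$, and $V_{k-1}$ is computable in $(\mathcal{F}\oplus v)^{(k-1)}$, so one further jump suffices. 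We close under $\rightarrow$ computably and obtain an index $e_k$ for $V_k$ relative to $(\mathcal{F}\oplus v)^{(k)}$.

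In $\mathrm{ACA}^{+}_0$ we take $(\mathcal{F}\oplus v)^{(\omega)}$. The map $k\mapsto e_k$ is then arithmetically definable from it, uniformly in $k$, exactly as in the formalization paragraph of Theorem~\ref{pextend}, so arithmetical comprehension yields $E=\{e_k\}_k$. We set $V(w,\varphi)=V_{\mathrm{qd}(\varphi)}(w,\varphi)$, and $V$ exists by arithmetical comprehension relative to $(\mathcal{F}\oplus v)^{(\omega)}\oplus E$.

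It remains to check that $V$ satisfies every clause of the definition. For a $\Box$- or $\forall$-sentence of measure $k$, its immediate subsentences have measure at most $k-1$. They are evaluated by $V_{k-1}$, which agrees with $V$ on $S_M[k-1]$. We prove this compatibility, $V_j\subseteq V_k$ for $j\leq k$, by arithmetical induction on $k$; this is available in $\mathrm{ACA}^{+}_0$ relative to the $\omega$-jump.

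The main obstacle is this formalization. We need the sequence of indices to be arithmetically definable uniformly in a possibly nonstandard $k$, and the induction establishing compatibility and the valuation clauses to be an arithmetical induction. Both are available because the whole construction is arithmetical in $(\mathcal{F}\oplus v)^{(\omega)}$. The rest is routine, as in Theorem~\ref{pextend}, since the quantifier clause has the same logical form as the $\Box$ clause with $R$ replaced by membership in $D_w$.
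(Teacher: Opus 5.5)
Your proposal is correct and follows the paper's proof, which just says the valuation is computed from $(\mathcal{F}\oplus v)^{(\omega)}$ as in Theorem~\ref{pextend}. Your measure $\mathrm{qd}$, which counts both $\Box$ and $\forall$ and spends one jump per level because the $\forall$-clause is $\Pi^0_1$ in $D\oplus V_{k-1}$ just as the $\Box$-clause is in $R\oplus V_{k-1}$, is what makes that ``similarly'' precise. Your explicit assumption that $v$ respects the congruence condition for a fixed term assignment is genuinely necessary and is passed over silently in the paper: if $D_w=\{0\}$ and $v(w,P_0(\overline{0}))\neq v(w,P_0(\overline{1}))$, then no extension of $v$ exists.
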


\begin{proof}
Similarly to the construction of the evaluation function in Theorem~\ref{pextend}, the valuation for modal predicate logic which extends $v$ can also be computed from $(\mathcal{F}\oplus v)^{(\omega)}$.
\end{proof}

We now introduce the Barcan formula and the Converse Barcan formula. In modal predicate logic, frame definability holds for these formulas.

\begin{dfn}[Barcan formula and Converse Barcan formula, $\mathrm{RCA}_0$~\cite{MR3558757}]
Let $\varphi(x)\in\mathrm{Fml}_{\mathrm{MP}}$ which has no free number variables except $x$. We define the formulas $\mathrm{BF}\varphi$ (Barcan formula) and $\mathrm{CBF}\varphi$ (Converse Barcan formula):
\begin{align*}
&\mathrm{BF}\varphi\equiv\forall x\Box\varphi(x)\rightarrow\Box\forall x\varphi(x),\\
&\mathrm{CBF}\varphi\equiv\Box\forall x\varphi(x)\rightarrow\forall x\Box\varphi(x).
\end{align*}
\end{dfn}

\begin{prop}[Frame definability for $\mathrm{BF}$, $\mathrm{ACA}^{+}_0$~\cite{MR3558757}]\label{bffdmpl}
Let $\mathcal{F}=(W, R, D)$ be a frame for modal predicate logic. Then the following statements are equivalent:
\begin{enumerate}
    \item $\mathcal{F}$ is a domain-decreasing frame, that is, $\forall w\forall v\in W(wRv\rightarrow D_{v}\subseteq D_{w})$,
    \item $\mathcal{F}\Vdash\mathrm{BF}\varphi$ for all $\varphi(x)\in\mathrm{Fml}_{\mathrm{MP}}$ which has no free number variables except $x$.
\end{enumerate}
\end{prop}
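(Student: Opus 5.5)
The argument splits into the two directions, following the standard proof in~\cite{MR3558757}; only $(2)\Rightarrow(1)$ needs more than $\mathrm{RCA}_0$, through Proposition~\ref{velmplaca*}.

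For $(1)\Rightarrow(2)$, let $V$ be a valuation for modal predicate logic on a domain-decreasing frame $\mathcal{F}$. Fix $\varphi(x)$ and $w\in W$ with $V(w,\forall x\Box\varphi(x))=1$. Then $V(w,\Box\varphi(\overline{d}))=1$ for every $d\in D_{w}$. Take $v$ with $wRv$ and $d\in D_{v}$. Since $D_{v}\subseteq D_{w}$, we have $d\in D_{w}$, so $V(v,\varphi(\overline{d}))=1$. Hence $V(v,\forall x\varphi(x))=1$ and therefore $V(w,\Box\forall x\varphi(x))=1$. This unfolds the clauses of the valuation directly and needs only $\mathrm{RCA}_0$, exactly as in Proposition~\ref{fdeasy}.

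For $(2)\Rightarrow(1)$, argue contrapositively. Suppose $a,b\in W$, $aRb$, and $d_{0}\in D_{b}\setminus D_{a}$. Fix a unary predicate $P_{0}$ and take $\varphi(x)\equiv P_{0}(x)$. Define an atomic assignment $v:W\times A_{M}\rightarrow 2$ by setting $v(w,P_{0}(\overline{d}))=1$ iff $d\neq d_{0}$ (uniformly in $w$), and setting every other atomic sentence to $1$. Terms are only constants $\overline{d}$, interpreted as $d$ wherever this is admissible. The assignment $v$ is $\Delta^{0}_{1}$ in the frame, so it exists in $\mathrm{RCA}_0$, and the congruence condition on atomic formulas holds trivially. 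Proposition~\ref{velmplaca*} (i.e.\ $\mathrm{ACA}^{+}_0$) then extends $v$ to a full valuation $V$. For every $d\in D_{a}$ we have $d\neq d_{0}$, so $V(u,P_{0}(\overline{d}))=1$ at every $u$. Thus $V(a,\Box P_{0}(\overline{d}))=1$, and so $V(a,\forall x\Box P_{0}(x))=1$. On the other hand, $V(b,P_{0}(\overline{d_{0}}))=0$ with $d_{0}\in D_{b}$, so $V(b,\forall xP_{0}(x))=0$ and hence $V(a,\Box\forall xP_{0}(x))=0$. Therefore $V(a,\mathrm{BF}P_{0})=0$, contradicting (2).

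The step needing most care is the interplay between the term-denotation clause $V(w,t)\in D_{w}$ and constants $\overline{d}$ with $d\notin D_{w}$. The quantifier clause only ever instantiates $\overline{d}$ with $d\in D_{w}$, so I would fix the convention that $V(w,\overline{d})=d$ when $d\in D_{w}$ and some fixed element of $D_{w}$ otherwise. This choice is computable from $D$, since picking a fixed element of each nonempty $D_{w}$ is a $\Delta^{0}_{1}$ search. I would then check that the argument above uses the truth value of $P_{0}(\overline{d})$ at a world $w$ only when $d\in D_{w}$. It does: at $u$ with $aRu$ it uses only $d\in D_{a}$, and a careful formulation of $v$ keyed to these denotations keeps the computation intact. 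Apart from this, the argument is a direct formalization, with the existence of $V$ being the only point requiring $\mathrm{ACA}^{+}_0$.
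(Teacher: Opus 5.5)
Your two directions are the paper's proof. For $(1)\Rightarrow(2)$ you unfold the clauses in the same way. For $(2)\Rightarrow(1)$ you use the same witness: make $P_0$ fail only at $\overline{d_0}$, so that $\forall xP_0(x)$ fails at $b$ while $\Box P_0(\overline d)$ holds at $a$ for all $d\in D_a$, then apply VEL for modal predicate logic. Your uniform assignment is a harmless simplification of the paper's case split. This main argument is correct provided each parameter $\overline d$ denotes $d$ at every world. Both you (``congruence holds trivially'') and the paper tacitly assume this.

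Your final paragraph, however, contains a false step and should be dropped.

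\emph{The claimed check fails.} The argument does not evaluate $P_0(\overline d)$ at $w$ only when $d\in D_w$. To get $V(a,\Box P_0(\overline d))=1$ for $d\in D_a$, you must evaluate $P_0(\overline d)$ at every $u$ with $aRu$, in particular at $b$. But $d\in D_a$ need not lie in $D_u$.

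\emph{Your convention breaks congruence.} Take the convention $V(w,\overline d)=c_w\in D_w$ for $d\notin D_w$. Then your uniform $v$ already violates congruence at $a$ itself: $V(a,\overline{d_0})=c_a=V(a,\overline{c_a})$, yet $v(a,P_0(\overline{d_0}))=0\neq 1=v(a,P_0(\overline{c_a}))$. If you instead key $v$ to denotations, setting $v(w,P_0(\overline d))=1\iff V(w,\overline d)\neq d_0$, the witness is destroyed whenever $D_b=\{d_0\}$. In that case every constant denotes $d_0$ at $b$.

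\emph{No convention can work.} Under the literal clause $V(w,t)\in D_w$ together with congruence, the proposition is false. Take $W=\{a,b\}$, $R=\{\langle a,b\rangle\}$, $D_a=\{0\}$, $D_b=\{1\}$; this frame is not domain-decreasing.
\begin{itemize}
\item At $b$ every closed term denotes $1$, and $b$ has no successors. So by induction $V(b,\chi(\overline 0))=V(b,\chi(\overline 1))$ for every $\chi(x)$.
\item Hence $V(a,\forall x\Box\varphi(x))=V(b,\varphi(\overline 0))=V(b,\varphi(\overline 1))=V(a,\Box\forall x\varphi(x))$.
\item So $\mathrm{BF}\varphi$ holds at $a$, and trivially at $b$, for every valuation.
\end{itemize}
The paper's own assignment sets $P_0(\overline{d_0})$ false at $a$ although $d_0\notin D_a$, so it runs into the same congruence problem.

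The correct repair is to state the rigid-designator semantics explicitly. Parameters $\overline d$ denote $d$ at every world, and atomic truth is assigned to all parameters; that is, drop the requirement $V(w,\overline d)\in D_w$. Under this reading congruence is vacuous between distinct parameters, your uniform $v$ extends by VEL for modal predicate logic, and your proof is complete.
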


\begin{proof}
$(1)\Rightarrow(2)$: Let $\mathcal{F}=(W, R, D)$ be a domain-decreasing frame and $\varphi(x)\in\mathrm{Fml}_{\mathrm{MP}}$ which has no free number variables except $x$. Fix $w\in W$, and assume that $V(w, \forall x\Box\varphi(x))=1$. Take $w^{\prime}\in wR$ and $d\in D_{w^{\prime}}$. Since $\mathcal{F}$ is domain-decreasing, $d\in D_{w}$. Thus, $V(w, \Box\varphi(d))=1$ and $V(w^{\prime}, \varphi(d))=1$. Then $V(w, \Box\forall x\varphi(x))=1$. Therefore $V(w, \mathrm{BF}\varphi)=1$. 
\vspace{1mm}

$(2)\Rightarrow(1)$: Fix $P_{0}(x)\in\mathrm{Atm}_{\mathrm{MP}}$. Assume that $\mathcal{F}$ is not domain-decreasing, that is, there exist $s, t\in W$ such that $sRt$ and $D_{t}\setminus D_{s}\not=\emptyset$. Take $d\in D_{t}\setminus D_{s}$. Then we define $v:W\times A_{M}\rightarrow2$ by the following: for all $w, u\in W$ and $\langle{u, a\rangle}\in D$,
\begin{align*}
&v(s, P_{0}(a))=1\Leftrightarrow a\in D_{s},\\
&v(t, P_{0}(a))=1\Leftrightarrow a\not=d,\\
&v(w, P_{0}(a))=1,\, \text{for all $w\not\in\{s, t\}$}.
\end{align*}

Atomic formulas other than $P_{0}(x)$ are evaluated to be true in every possible world. By VEL for modal predicate logic (that is, $\mathrm{ACA}^{+}_0$), we have the valuation $V: W\times(T_{M}\cup S_{M})\rightarrow2$ for modal predicate logic on $(W, R, D)$ such that $V$ is an extension of $v$. Then $V(s, \forall x\Box P_{0}(x))=1$, but $V(s, \Box\forall xP_{0}(x))=0$. Thus, $V(s, \mathrm{BF}P_0)=0$.
\end{proof}

\begin{cor}[Frame definability for $\mathrm{CBF}$, $\mathrm{ACA}^{+}_0$~\cite{MR3558757}]\label{cbffdmpl}
Let $\mathcal{F}=(W, R, D)$ be a frame for modal predicate logic. Then the following statements are equivalent:
\begin{enumerate}
    \item $\mathcal{F}$ is a domain-increasing frame, that is, $\forall w\forall v\in W(wRv\rightarrow D_{w}\subseteq D_{v})$,
    \item $\mathcal{F}\Vdash\mathrm{CBF}\varphi$ for all $\varphi(x)\in\mathrm{Fml}_{\mathrm{MP}}$ which has no free number variables except $x$.
\end{enumerate}
\end{cor}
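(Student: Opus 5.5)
The plan is to mirror the proof of Proposition~\ref{bffdmpl}, with the roles of $s$ and $t$ reversed, arguing in $\mathrm{ACA}^{+}_0$. Only the direction $(2)\Rightarrow(1)$ uses VEL for modal predicate logic, that is, Proposition~\ref{velmplaca*}.

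For $(1)\Rightarrow(2)$, let $\mathcal{F}$ be domain-increasing, let $V$ be a valuation on $\mathcal{F}$, and let $\varphi(x)$ be given. Fix $w\in W$ with $V(w,\Box\forall x\varphi(x))=1$, and take $d\in D_{w}$ and $w'\in wR$. Since $\mathcal{F}$ is domain-increasing, $d\in D_{w'}$. From $V(w',\forall x\varphi(x))=1$ we get $V(w',\varphi(\overline{d}))=1$. As $w'$ was arbitrary, $V(w,\Box\varphi(\overline{d}))=1$, and as $d$ was arbitrary, $V(w,\forall x\Box\varphi(x))=1$. This direction is a direct unfolding of the clauses for $\Box$ and $\forall$, so it goes through in $\mathrm{RCA}_0$.

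For $(2)\Rightarrow(1)$, I argue by contraposition. Suppose there are $s,t\in W$ with $sRt$ and some $d\in D_{s}\setminus D_{t}$. Fix the unary predicate $P_{0}$ and define $v:W\times A_{M}\rightarrow2$ as follows:
\begin{align*}
&v(t, P_{0}(a))=1\Leftrightarrow a\in D_{t},\\
&v(w, P_{0}(a))=1\Leftrightarrow a\neq d,\, \text{for all $w\neq t$},
\end{align*}
and let every other atomic formula be true everywhere. This $v$ exists by $\Delta^0_1$-comprehension relative to $D$. By Proposition~\ref{velmplaca*} there is a valuation $V$ extending $v$.

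Now check the two sides of $\mathrm{CBF}P_{0}$ at $s$. Every $u\in sR$ satisfies $V(u,\forall xP_{0}(x))=1$. If $u=t$, this holds because $v(t,P_0(a))=1$ for all $a\in D_t$. If $u\neq t$, it holds provided $d\notin D_{u}$, which need not be the case. To avoid this case analysis, I will instead set $v(w,P_0(a))=1$ for all $a$ at every $w\neq s$, and at $s$ set $v(s,P_0(a))=1\Leftrightarrow a\neq d$. Then every $u\in sR$ with $u\neq s$ satisfies $\forall xP_0(x)$. If $sRs$, then $s\in sR$ and the witness $d\in D_s$ makes $\forall x P_0(x)$ false at $s$, which breaks the antecedent. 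That would be a problem, so I return to the original plan and handle it by making $P_0(d)$ fail only at $t$: set $v(w,P_0(a))=1$ for $w\neq t$, and $v(t,P_0(a))=1\Leftrightarrow a\neq d$. Since $d\notin D_t$, we still get $V(t,\forall xP_0(x))=1$, and every other world in $sR$ satisfies it trivially. Hence $V(s,\Box\forall xP_0(x))=1$. But $d\in D_s$ and $V(t,P_0(\overline{d}))=0$, so $V(s,\Box P_0(\overline{d}))=0$ and therefore $V(s,\forall x\Box P_0(x))=0$. This gives $V(s,\mathrm{CBF}P_0)=0$, contradicting (2).

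The main obstacle I expect is the formal bookkeeping. A valuation $V$ assigns values to sentences $P_0(\overline{d})$ even at worlds where $d\notin D_w$, and only the quantifier clause is relativized to $D_w$. So the choice of $v$ must make the quantified sentences come out as intended while staying consistent with the congruence condition on terms. Taking constants to denote themselves and treating $P_0(\overline d)$ pointwise should suffice.
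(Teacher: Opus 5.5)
Your proof is correct and follows the paper's route. Direction $(1)\Rightarrow(2)$ is the same unfolding of the $\Box$ and $\forall$ clauses, and $(2)\Rightarrow(1)$ is the same contraposition: pick $d\in D_s\setminus D_t$, falsify $P_0(\overline{d})$ at $t$, and apply VEL for modal predicate logic (Proposition~\ref{velmplaca*}). Your final assignment makes $P_0(a)$ false only at $t$ with $a=d$, while the paper makes $P_0(a)$ true at $s$ and $t$ iff $a\in D_w$; this difference is cosmetic, but in a write-up you should present only that final choice and drop the two abandoned attempts.
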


\begin{proof}
$(1)\Rightarrow(2)$: Let $\mathcal{F}=(W, R, D)$ be a domain-increasing frame and $\varphi(x)\in\mathrm{Fml}_{\mathrm{MP}}$ which has no free number variables except $x$. Fix $w\in W$, and assume that $V(w, \Box\forall x\varphi(x))=1$. Take $d\in D_{w}$ and $w^{\prime}\in wR$. Since $\mathcal{F}$ is domain-increasing, $d\in D_{w^{\prime}}$. Since $V(w^{\prime}, \forall x\varphi(x))=1$, $V(w^{\prime}, \varphi(d))=1$. Thus, $V(w, \Box\varphi(d))=1$. Then $V(w, \forall x\Box\varphi(x))=1$. Therefore $V(w, \mathrm{CBF}\varphi)=1$. 
\vspace{1mm}

$(2)\Rightarrow(1)$: Fix $P_{0}(x)\in\mathrm{Atm}_{\mathrm{MP}}$. Assume that $\mathcal{F}$ is not domain-increasing, that is, there exist $s, t\in W$ such that $sRt$ and $D_{s}\setminus D_{t}\not=\emptyset$. Take $d\in D_{s}\setminus D_{t}$. Then we define $v:W\times A_{M}\rightarrow2$ by the following: for all $w, u\in W$ and $\langle{u, a\rangle}\in D$,
\begin{align*}
&v(w, P_{0}(a))=1\Leftrightarrow a\in D_{w},\, \text{for all $w\in\{s, t\}$}\\
&v(w, P_{0}(a))=1,\, \text{otherwise}.
\end{align*}

Atomic formulas other than $P_{0}(x)$ are evaluated to be true in every possible world. By VEL for modal predicate logic, we have the valuation $V: W\times(T_{M}\cup S_{M})\rightarrow2$ for modal predicate logic on $(W, R, D)$ such that $V$ is an extension of $v$. Then $V(s, \Box\forall x P_{0}(x))=1$, but $V(s, \forall x\Box P_{0}(x))=0$. Thus, $V(s, \mathrm{CBF}P_0)=0$.
\end{proof}

By the above, the following holds.

\begin{thm}
The following statements are equivalent over $\mathrm{RCA}_0$:
\begin{enumerate}
    \item $\mathrm{ACA}^{+}_0$,
    \item The Valuation Extension Lemma for modal predicate logic,
    \item Frame definability for $\mathrm{BF}$,
    \item Frame definability for $\mathrm{CBF}$,
    \item For each frame for modal predicate logic $\mathcal{F}=(W, R, D)$, there exists a valuation $V$ for modal predicate logic on $\mathcal{F}$.
\end{enumerate}
\end{thm}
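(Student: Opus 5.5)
We prove the cycle $(1)\Rightarrow(2)\Rightarrow(3)$, $(2)\Rightarrow(4)$, $(3)\Rightarrow(5)$, $(4)\Rightarrow(5)$, $(5)\Rightarrow(1)$, mirroring the proof of Theorem~\ref{aca+equiv}.

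\textbf{Forward implications.} The implication $(1)\Rightarrow(2)$ is Proposition~\ref{velmplaca*}. For $(2)\Rightarrow(3)$ and $(2)\Rightarrow(4)$ we use Proposition~\ref{bffdmpl} and Corollary~\ref{cbffdmpl}. Their only non-$\mathrm{RCA}_0$ ingredient is the existence of an extending valuation, which VEL for modal predicate logic supplies. The assignment $v$ defined there is $\Delta^0_1$ in $\mathcal{F}$ and the chosen parameters $s,t,d$, so it exists in $\mathrm{RCA}_0$.

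\textbf{From $(3)$ or $(4)$ to $(5)$.} Let $\mathcal{F}=(W,R,D)$ be given. Form $\mathcal{F}^\star$ by adjoining two fresh worlds $s,t$ with $sRt$ and no other new edges. For $(3)$, take $D_s=\{0\}$ and $D_t=\{0,1\}$, so $\mathcal{F}^\star$ is not domain-decreasing. For $(4)$, reverse these two domains. By the hypothesis, some valuation $V^\star$ on $\mathcal{F}^\star$ fails $\mathrm{BF}$ (respectively $\mathrm{CBF}$); in particular, a valuation on $\mathcal{F}^\star$ exists. The old worlds have no $R$-edges to the new ones, and the clauses for $\Box$ and $\forall$ at $w\in W$ mention only worlds reachable from $w$ and $D_w$. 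Hence restricting $V^\star$ to $W\times(T_M\cup S_M)$ gives a valuation on $\mathcal{F}$; this restriction is $\Delta^0_1$ in $V^\star$.

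\textbf{$(5)\Rightarrow(1)$, the main step.} Reduce to statement (5) of Theorem~\ref{aca+equiv}. Given a propositional frame $(W,R)$, let $D=W\times\{0\}$, so every domain is $\{0\}$, and translate each propositional variable $p_i$ to the sentence $P_i(\overline{0})$ via a primitive recursive map $\varphi\mapsto\varphi^\ast$. A predicate valuation $V$ on $(W,R,D)$ then yields $V'(w,\varphi)=V(w,\varphi^\ast)$, which exists by $\Delta^0_1$ comprehension. The valuation clauses for $\bot$, $\rightarrow$, and $\Box$ transfer immediately, since the translation commutes with these connectives; no $\forall$ appears in translated formulas. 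So $V'$ is a propositional valuation on $(W,R)$, and Theorem~\ref{aca+equiv} gives $\mathrm{ACA}^+_0$.

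I expect the main obstacle to be checking that this translation step is sound inside $\mathrm{RCA}_0$. It only needs the primitive recursive coding and the definitional clauses, with no induction beyond $\Sigma^0_1$. If the definition of a predicate valuation imposes extra constraints that are awkward here (for instance on terms $\overline{d}$ with $d\notin D_w$), the fallback is to redo the frame construction of Corollary~\ref{vcompomega} directly with constant domains and $\Diamond$-formulas over $\top$. That argument is insensitive to the first-order layer.
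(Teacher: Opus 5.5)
Your proposal is correct and follows the paper's proof essentially step for step: the same cycle of implications, the same two-world gadget appended to $\mathcal{F}$ for $(3)\Rightarrow(5)$ and $(4)\Rightarrow(5)$, and the same constant-domain translation $p_i\mapsto P_i(\overline{c})$ that reduces $(5)\Rightarrow(1)$ to statement (5) of Theorem~\ref{aca+equiv}. The only cosmetic difference is in the gadget: the paper gives the two new worlds disjoint singleton domains $\{c\}$ and $\{d\}$, which is neither domain-decreasing nor domain-increasing and so handles $\mathrm{BF}$ and $\mathrm{CBF}$ at once, whereas you use $\{0\}\subseteq\{0,1\}$ for one and the reverse for the other.
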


\begin{proof}
$(1)\Rightarrow(2)$: By Proposition~\ref{velmplaca*}. 
\vspace{1mm}

$(2)\Rightarrow(3)$: By Proposition~\ref{bffdmpl}. 
\vspace{1mm}

$(2)\Rightarrow(4)$: By Corollary~\ref{cbffdmpl}.
\vspace{1mm}

$(3)\Rightarrow(5)$: Let $\mathcal{F}=(W, R, D)$ be a frame for modal predicate logic and let $a, b, c, d$ be new symbols that do not occur in $W$. We define $W^{\prime}=\{a, b\}$ and $R^{\prime}=\{\langle{a, b\rangle}\}$. Then we let $D^{\prime}=\{\langle{a, c\rangle}, \langle{b, d\rangle}\}$. 

Now, we define the new frame $\mathcal{F}^{\star}=(W^{\star}, R^{\star}, D^{\star})=(W\sqcup W^{\prime}, R\sqcup R^{\prime}, D\sqcup D^{\prime})$. Then there exists $a, b\in W^{\star}$ such that $aR^{\star}b$ and $D^{\star}_{b}\not\subseteq D^{\star}_{a}$. Thus, by frame definability for $\mathrm{BF}$, there exists a valuation $V^{\star}$ on $\mathcal{F}^{\star}$. Then, $V=V^{\star}\upharpoonright(W\times(T_{M}\cup S_{M}))$ is a valuation for modal predicate logic on $\mathcal{F}$.
\vspace{1mm}

$(4)\Rightarrow(5)$: This can be shown by the same method used for the implication $(3)\Rightarrow(5)$.
\vspace{1mm}

$(5)\Rightarrow(1)$: By Theorem~\ref{aca+equiv}, it is enough to show that for each frame $F=(W, R)$, there exists a valuation $V: W\times\mathrm{Fml}\rightarrow 2$ on $F$. Let $F=(W, R)$ be a frame and $c$ be a new symbol that does not occur in $W$. We define $D=\{\langle{w, c\rangle}\mid w\in W\}$. Then, for such $\mathcal{F}=(W, R, D)$, by the assumption, there exists a valuation $V^{\star}$ for modal predicate logic on $\mathcal{F}$. Next, for each $\varphi\in\mathrm{Fml}$, we define $\varphi^{\mathrm{MPc}}\in S_{M}$ as follows:
\begin{itemize}
    \item $(p_{i})^{\mathrm{MPc}}\equiv P_{i}(c)$ for all $i$,
    \item $(\bot)^{\mathrm{MPc}}\equiv\bot$,
    \item $(\varphi\rightarrow\psi)^{\mathrm{MPc}}\equiv(\varphi)^{\mathrm{MPc}}\rightarrow(\psi)^{\mathrm{MPc}}$,
    \item $(\Box\varphi)^{\mathrm{MPc}}\equiv\Box(\varphi)^{\mathrm{MPc}}$.
\end{itemize}

Then we define $V: W\times\mathrm{Fml}\rightarrow 2$; $V(w, \varphi)=V^{\star}(w, (\varphi)^{\mathrm{MPc}})$. This $V$ is a valuation on $F$.
\end{proof}

\section{The Valuation Extension Lemma for $\mathbf{CTL}$ and for $\mathbf{LTL}$}
Next, we further analyze VEL. In this section, we consider VEL for computation tree logic $\mathbf{CTL}$ and linear temporal logic $\mathbf{LTL}$. Both $\mathbf{CTL}$ and $\mathbf{LTL}$ are well-known temporal logics in computer science. First, we define the set of $\mathbf{CTL}$-formulas, $\mathrm{Fml}_{\mathbf{CTL}}$, as follows: $\mathrm{Atm}\subseteq\mathrm{Fml}_{\mathbf{CTL}}$ and for all $\varphi, \psi\in\mathrm{Fml}_{\mathbf{CTL}}$, $\varphi\rightarrow\psi\in\mathrm{Fml}_{\mathbf{CTL}}$, $\mathsf{AX}\varphi, \mathsf{EG}\varphi\in\mathrm{Fml}_{\mathbf{CTL}}$, and $\varphi\mathsf{EU}\psi\in\mathrm{Fml}_{\mathbf{CTL}}$.
\vspace{2mm}

We next formalize $\mathbf{CTL}$-models. Our formalization of the semantics over $\mathrm{RCA}_0$ is based on~\cite{MR1191162}.

\begin{dfn}[$\mathrm{RCA}_0$~\cite{MR1191162}]
A \emph{$\mathbf{CTL}$-model} is a tuple $M=(W, R, V)$ satisfying the following conditions:
\begin{enumerate}
    \item $(W, R)$ is a serial frame, that is, $\forall w\in W\exists u\in W(wRu)$,
    \item %$V$ is a function which assigns a truth value to each pair of a propositional formula and an element of $W$, i.e., 
$V : W\times\mathrm{Fml}_{\mathbf{CTL}} \rightarrow\{0, 1\}$,
    \item $V(w, \bot)=0$ for any $w\in W$, 
    \item $V(w, \varphi\rightarrow\psi)=1-V(w, \varphi)(1-V(w, \psi))$ for any $w\in W$ and any $\varphi, \psi\in\mathrm{Fml}_{\mathbf{CTL}}$, 
    \item $V(w, \mathsf{AX}\varphi)=1\iff\forall v\in W(wRv\rightarrow V(v, \varphi)=1)$ for any $w\in W$ and any $\varphi\in\mathrm{Fml}_{\mathbf{CTL}}$,
    \item $V(w, \mathsf{EG}\varphi)=1\iff\exists f: \mathbb{N}\rightarrow W\Bigl(f(0)=w\land\forall i, j\bigl(i<j\rightarrow f(i)Rf(j)\bigr)\\\land \bigl(\forall i\geq0 (V(f(i), \varphi)=1))\bigr)\Bigr)$ for any $w\in W$ and any $\varphi\in\mathrm{Fml}_{\mathbf{CTL}}$,
    \item $V(w, \varphi\mathsf{EU}\psi)=1\iff\exists f: \mathbb{N}\rightarrow W\Bigl(f(0)=w\land\forall i, j\bigl(i<j\rightarrow f(i)Rf(j)\bigr)\\\land \bigl(\exists i\geq0 \forall j<i(V(f(j), \varphi)=1\land V(f(i), \psi)=1))\bigr)\Bigr)$ for any $w\in W$ and any $\varphi, \psi\in\mathrm{Fml}_{\mathbf{CTL}}$.
\end{enumerate}

$V: W\times\mathrm{Fml}_{\mathbf{CTL}} \rightarrow\{0, 1\}$ is a \textit{$\mathbf{CTL}$-valuation on a serial frame $(W, R)$} if $(W, R, V)$ is a $\mathbf{CTL}$-model.
\end{dfn}

Next, we define the set of $\mathbf{LTL}$-formulas, $\mathrm{Fml}_{\mathbf{LTL}}$, as follows: $\mathrm{Atm}\subseteq\mathrm{Fml}_{\mathbf{LTL}}$ and for all $\varphi, \psi\in\mathrm{Fml}_{\mathbf{LTL}}$, $\varphi\rightarrow\psi\in\mathrm{Fml}_{\mathbf{LTL}}$, $\mathsf{X}\varphi\in\mathrm{Fml}_{\mathbf{LTL}}$, and $\varphi\mathsf{U}\psi\in\mathrm{Fml}_{\mathbf{LTL}}$.
\vspace{2mm}

We next formalize $\mathbf{LTL}$-models. There are several possible definitions of $\mathbf{LTL}$-models; see~\cite{MR2493187} for details. In this paper, we regard a frame as a single infinite path and consider $\mathbf{LTL}$-valuations on it.

\begin{dfn}[$\mathrm{RCA}_0$~\cite{MR2493187, MR1191162}]
A \emph{$\mathbf{LTL}$-model} is a tuple $M=(S, R, V)$ satisfying the following conditions:
\begin{enumerate}
    \item $S=\{s_{i}\}_{i\in\mathbb{N}}$ is an infinite path by $R$, that is, $\forall i, j\in\mathbb{N}\bigl(i<j\rightarrow s_{i}R\,s_{j}\bigr)$,
    \item %$V$ is a function which assigns a truth value to each pair of a propositional formula and an element of $W$, i.e., 
$V : S\times\mathrm{Fml}_{\mathbf{LTL}} \rightarrow\{0, 1\}$,
    \item $V(s_{i}, \bot)=0$ for any $i\in\mathbb{N}$, 
    \item $V(s_{i}, \varphi\rightarrow\psi)=1-V(s_{i}, \varphi)(1-V(s_{i}, \psi))$ for any $i\in\mathbb{N}$ and any $\varphi, \psi\in\mathrm{Fml}_{\mathbf{LTL}}$, 
    \item $V(s_{i}, \mathsf{X}\varphi)=1\iff V(s_{i+1}, \varphi)=1$ for any $i\in\mathbb{N}$ and any $\varphi\in\mathrm{Fml}_{\mathbf{LTL}}$,
    \item $V(s_{i}, \varphi\mathsf{U}\psi)=1\iff\exists n\geq i\bigl(\forall j(i\leq j<n\rightarrow(V(s_{j}, \varphi)=1))\land V(s_{n}, \psi)=1\bigr)$ for any $i\in\mathbb{N}$ and any $\varphi, \psi\in\mathrm{Fml}_{\mathbf{LTL}}$.
\end{enumerate}

We call the above pair $(S, R)$ a \emph{$\mathbf{LTL}$-frame}. A function $V: S\times\mathrm{Fml}_{\mathbf{LTL}} \rightarrow\{0, 1\}$ is a \textit{$\mathbf{LTL}$-valuation on a $\mathbf{LTL}$-frame $(S, R)$} if $(S, R, V)$ is a $\mathbf{LTL}$-model.
\end{dfn}

\subsection{The Valuation Extension Lemma for $\mathbf{CTL}$}
We now consider the logical strength of the following three statements.
\vspace{2mm}

\begin{state}[The Restricted Valuation Extension Lemma for $\mathbf{CTL}$]
For each serial frame $F=(W, R)$, $n\in\mathbb{N}$, and $v:W\times\mathrm{Prop}\rightarrow2$, there exists a $n$-restricted $\mathbf{CTL}$-valuation $V$ on $(W, R)$ such that $V$ is an extension of $v$.
\end{state}

\begin{state}[The Valuation Extension Lemma for $\mathbf{CTL}$]
For each serial frame $F=(W, R)$ and $v:W\times\mathrm{Prop}\rightarrow2$, there exists a $\mathbf{CTL}$-valuation $V$ on $(W, R)$ such that $V$ is an extension of $v$.
\end{state}

\begin{state}[The Valuation Extension Lemma for image-finite $\mathbf{CTL}$]
For each image-finite serial frame $F=(W, R)$ and $v:W\times\mathrm{Prop}\rightarrow2$, there exists a $\mathbf{CTL}$-valuation $V$ on $(W, R)$ such that $V$ is an extension of $v$.
\end{state}
\vspace{2mm}

First, we consider Restricted VEL for $\mathbf{CTL}$. By the preceding discussion, this statement implies $\mathrm{ACA}_0$ over $\mathrm{RCA}_0$.
\begin{prop}[$\mathrm{RCA}_0$]
Restricted VEL for $\mathbf{CTL}$ implies $\mathrm{ACA}_{0}$.
\end{prop}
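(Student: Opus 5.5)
We reduce to Proposition~\ref{acaeqijf}: it suffices to show that every injective $f:\mathbb{N}\rightarrow\mathbb{N}$ has a range set. The plan is to adapt the proof of Proposition~\ref{velraca}, or Proposition~\ref{wwwvelaca}, to the $\mathbf{CTL}$ setting. The one new feature is that $\mathbf{CTL}$-frames must be serial, so we will build the frame with that in mind.

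Given $f$, take $W=\mathbb{N}$ and $R=W\times W$. This frame is serial and is $\Delta^0_1$-definable, so it exists in $\mathrm{RCA}_0$. Define $v:W\times\mathrm{Prop}\rightarrow 2$ by $v(w,p_i)=1\Leftrightarrow f(w)=i$, exactly as in Proposition~\ref{velraca}; this exists by $\Delta^0_1$-comprehension. Apply Restricted VEL for $\mathbf{CTL}$ with $n=1$ to get a $1$-restricted $\mathbf{CTL}$-valuation $V$ extending $v$. Here degree is taken for $\mathbf{CTL}$-formulas in the natural way, with each temporal operator adding one, so that $\mathsf{AX}\neg p_i$ and its negation lie in the restricted domain.

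Now set $I=\{w\in W\mid V(w,\neg\mathsf{AX}\neg p_w)=1\}$, which exists by $\Delta^0_1$-comprehension relative to $V$. By the clause for $\mathsf{AX}$, $V(n,\neg\mathsf{AX}\neg p_n)=1$ holds iff there is $m$ with $nRm$ and $V(m,p_n)=1$. Since $R$ is total, this holds iff $\exists m\, (f(m)=n)$. So $I$ is the range of $f$.

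An alternative avoids propositional variables, following Proposition~\ref{wwwvelaca}. We would need a serial frame in which reaching a designated world encodes $\exists m\,(f(m)=n)$. One way is to let $nRm'$ whenever $f(m)=n$, and to add a self-loop at a fresh sink world that every node can reach. Then a formula such as $\mathsf{EX}\,q$ would test for the relevant successors, with $q$ marking the non-sink worlds. The first approach is simpler, so that is the one we use.

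The only step that needs care is the bookkeeping: checking that the chosen formula has degree at most the restriction level $n$ under the paper's notion of $n$-restricted $\mathbf{CTL}$-valuation, and that seriality holds. If the restricted notion also counted $\mathsf{EG}$ or $\mathsf{EU}$, nothing changes, since we only use $\mathsf{AX}$. Everything else is a routine verification in $\mathrm{RCA}_0$.
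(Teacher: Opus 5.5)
Your proof is correct and is essentially the paper's own argument. The paper just reuses the serial frame $W=\mathbb{N}$, $R=W\times W$ and the assignment $v(w,p_i)=1\Leftrightarrow f(w)=i$ from Proposition~\ref{velraca}, and reads off the range of $f$ via $\Diamond p_w$, i.e.\ $\neg\mathsf{AX}\neg p_w$, exactly as you do (your aside about an alternative that ``avoids propositional variables'' in fact uses $q$, but it plays no role in the proof).
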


\begin{proof}
It suffices to use the frame constructed in Proposition~\ref{velraca}.
\end{proof}

\begin{cor}[$\mathrm{RCA}_0$]
VEL for image-finite $\mathbf{CTL}$ implies $\mathrm{ACA}_{0}$.
\end{cor}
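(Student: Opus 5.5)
We reduce to Proposition~\ref{acaeqijf}: given an injective $f:\mathbb{N}\rightarrow\mathbb{N}$, we build an image-finite serial frame and an assignment whose $\mathbf{CTL}$-valuation lets us define the range of $f$ by $\Delta^0_1$-comprehension. The frame used in Proposition~\ref{velraca} ($R=W\times W$) is not image-finite, so we need a different one. The natural choice follows the frame of Proposition~\ref{wwwvelaca}, $R=\{\langle n,m\rangle\mid f(m)=n\}$, which is image-finite because $f$ is injective. It is not serial, however, so we make it serial by adding self-loops.

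\textbf{Construction.} Let $W=\mathbb{N}\times\{0,1\}$, and write $a_n=\langle n,0\rangle$ and $b_m=\langle m,1\rangle$. Define $R$ by:
\begin{itemize}
\item $a_nRb_m$ iff $f(m)=n$;
\item $a_nRa_n$ and $b_mRb_m$ for all $n,m$;
\item nothing else.
\end{itemize}
$R$ is $\Delta^0_1$ in $f$, so it exists in $\mathrm{RCA}_0$. It is serial because of the loops. For image-finiteness, $a_nR\subseteq\{a_n\}\cup\{b_m\mid f(m)=n\}$, and this set has at most two elements by injectivity; $b_mR=\{b_m\}$. Note that image-finiteness here only asks that each $wR$ be finite, so we do not need a uniform bound.

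For the assignment, fix $p_0$ and set $v(b_m,p_0)=1$, $v(a_n,p_0)=0$, and $v(w,p)=1$ for all other $p$. By VEL for image-finite $\mathbf{CTL}$, obtain a $\mathbf{CTL}$-valuation $V$ extending $v$.

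\textbf{Extracting the range.} Let $I=\{n\mid V(a_n,\neg\mathsf{AX}\neg p_0)=1\}$; this exists by $\Delta^0_1$-comprehension relative to $V$. By the $\mathsf{AX}$ clause,
\[
V(a_n,\neg\mathsf{AX}\neg p_0)=1 \iff \exists u\,\bigl(a_nRu\land V(u,p_0)=1\bigr).
\]
The successors of $a_n$ are $a_n$ itself, where $p_0$ is false, and the points $b_m$ with $f(m)=n$, where $p_0$ is true. So the right-hand side holds iff $\exists m\,(f(m)=n)$, and $I$ is the range of $f$. Proposition~\ref{acaeqijf} then gives $\mathrm{ACA}_0$.

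\textbf{Main obstacle.} The only delicate point is ensuring the frame is both serial and image-finite while the witness relation stays $\Sigma^0_1$ and unbounded. Splitting worlds into two copies handles this: the loops give seriality without adding $p_0$-true successors to any $a_n$. Everything else is formalizable directly in $\mathrm{RCA}_0$, since only the $\mathsf{AX}$ and Boolean clauses of $V$ are used.
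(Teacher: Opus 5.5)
Your proof is correct, and it takes a different route from the one the paper indicates. The paper gives no separate proof of this corollary. It comes directly after the proposition that Restricted VEL for $\mathbf{CTL}$ implies $\mathrm{ACA}_0$, whose proof only points to the frame of Proposition~\ref{velraca}. That frame has $R=W\times W$, which is serial but not image-finite, so it cannot be used with VEL for image-finite $\mathbf{CTL}$. Nor is the corollary a formal consequence of that proposition: Restricted VEL for $\mathbf{CTL}$ speaks about arbitrary serial frames, and by the paper's later results it implies $\Pi^1_1$-$\mathrm{CA}_0$, whereas the image-finite version is equivalent to $\mathrm{ACA}^{+}_0$.

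You noticed this and used instead the injective-function frame $R=\{\langle n,m\rangle\mid f(m)=n\}$ from Proposition~\ref{wwwvelaca}, which the paper also uses in its image-finite $\mathrm{ACA}_0$ theorem. You doubled the worlds, added self-loops for seriality, and used $p_0$ so the loops cannot act as spurious witnesses. Image-finiteness follows from injectivity by a case split on whether $n$ is in the range, just as the paper argues there.

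Your argument uses only the Boolean and $\mathsf{AX}$ clauses of a $\mathbf{CTL}$-valuation. So it is a complete $\mathrm{RCA}_0$ proof that does not depend on how paths are formalized in the $\mathsf{EG}$ and $\mathsf{EU}$ clauses.

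The other image-finite route in the paper is the $2$-branching frame of Proposition~\ref{ac2bctlvel} (with $k=0$), or Corollary~\ref{ctla2branch}. That gives the stronger $\mathrm{ACA}^{+}_0$ lower bound. However, it is a computability-theoretic argument built on the $\mathsf{EU}$ and $\mathsf{EG}$ path clauses and would still have to be formalized. For the $\mathrm{ACA}_0$ bound alone, your construction is the more direct one.
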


Moreover, Restricted VEL for $\mathbf{CTL}$ implies $\Pi^1_1$-comprehension. We use the following lemma for the proof.

\begin{lem}[\cite{MR2517689}]\label{pi11catree}
The following are equivalent over $\mathrm{RCA}_0$:
\begin{enumerate}
    \item $\Pi^1_1$-$\mathrm{CA}_0$,
    \item For any sequence of trees $\langle T_{k}\mid k\in\mathbb{N}\rangle$, $T_{k}\subseteq\mathbb{N}^{<\mathbb{N}}$, there exists a set $X$ such that $\forall k(k\in X\leftrightarrow T_{k}\text{ has a path})$.
\end{enumerate}
\end{lem}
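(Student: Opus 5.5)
This lemma is cited from Simpson~\cite{MR2517689}, and I would prove it along the standard lines (Lemma VI.1.1 there), working over $\mathrm{RCA}_0$.

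\emph{$(1)\Rightarrow(2)$.} Given $\langle T_k\mid k\in\mathbb{N}\rangle$, the property ``$T_k$ has a path'' is $\Sigma^1_1$ in $k$ with parameter the sequence: $\exists f\,\forall n\,(f\upharpoonright n\in T_k)$. Its negation is $\Pi^1_1$, so $\Pi^1_1$-comprehension gives $Y=\{k\mid T_k \text{ has no path}\}$. Then $X=\mathbb{N}\setminus Y$ exists by $\Delta^0_1$-comprehension and is the required set.

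\emph{$(2)\Rightarrow(1)$.} This direction has two stages.

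\begin{enumerate}
\item First derive $\mathrm{ACA}_0$. Given an injective $f$, let $T_k$ be the tree of all sequences $\sigma$ such that no $m<lh(\sigma)$ has $f(m)=k$, together with $\sigma(0)=m$ when $f(m)=k$ and $lh(\sigma)>m$. More simply, let $T_k$ consist of sequences whose first entry witnesses $f(\sigma(0))=k$, subject to the condition that $\sigma$ is a constant sequence; a computable variant works. Then $T_k$ has a path iff $k\in\mathrm{rng}(f)$, so (2) gives the range of $f$, and Proposition~\ref{acaeqijf} yields $\mathrm{ACA}_0$.
\item Next, take an arbitrary $\Pi^1_1$ formula $\varphi(k)$. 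By the normal form theorem for $\Pi^1_1$ formulas, provable in $\mathrm{ACA}_0$, there is a uniformly computable (in the parameters) sequence of trees with
\[
\varphi(k)\leftrightarrow T_k \text{ has no path}.
\]
This uses the Kleene normal form $\forall g\,\exists n\,\theta(k,g\upharpoonright n)$ with $\theta$ bounded, where $T_k=\{\sigma\mid \neg(\exists n\le lh(\sigma))\,\theta(k,\sigma\upharpoonright n)\}$. Applying (2) and taking the complement gives $\{k\mid\varphi(k)\}$.
\end{enumerate}

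The main obstacle is the normal form step: carefully reducing an arbitrary $\Pi^1_1$ formula with set parameters to the form $\forall g\,\exists n\,\theta$. This needs arithmetical manipulations such as Skolemizing the number quantifiers into the function quantifier, and that in turn requires $\mathrm{ACA}_0$, which is why stage one comes first. I would simply cite Simpson's Lemma V.1.4 for it.
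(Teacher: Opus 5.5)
Your proposal is correct, and it is essentially the standard argument from Simpson that the paper cites without reproducing. The forward direction is immediate from $\Pi^1_1$-comprehension. The reverse direction first extracts $\mathrm{ACA}_0$ from the existence of ranges, then uses the Kleene normal form in $\mathrm{ACA}_0$ to realize an arbitrary $\Pi^1_1$ formula as ``$T_k$ has no path''.

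The one blemish is your first description of $T_k$ in stage one. Read as a union, that tree always has a path, so drop it. Keep the constant-sequence tree $\{\sigma\mid \forall i<lh(\sigma)\,(\sigma(i)=\sigma(0)\land f(\sigma(0))=k)\}$, which has a path exactly when $k\in\mathrm{rng}(f)$.
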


\begin{thm}[$\mathrm{RCA}_0$]
Restricted VEL for $\mathbf{CTL}$ implies $\Pi^1_1$-$\mathrm{CA}_0$.
\end{thm}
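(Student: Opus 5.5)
By Lemma~\ref{pi11catree}, it suffices to show the following. Given a sequence of trees $\langle T_k\mid k\in\mathbb{N}\rangle$ with $T_k\subseteq\mathbb{N}^{<\mathbb{N}}$, we want the set $X=\{k\mid T_k\text{ has a path}\}$ to exist. The plan is to code all the trees into one serial frame. Then a single formula of the form $\mathsf{EG}p$ is evaluated at the root of each coded tree, and the restricted valuation hands us $X$ by $\Delta^0_1$-comprehension. The key point is that the $\mathsf{EG}$ clause in the definition of a $\mathbf{CTL}$-model quantifies over functions $f:\mathbb{N}\rightarrow W$. This is exactly a $\Sigma^1_1$ condition, matching ``$T_k$ has a path''.

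Concretely, let $W$ consist of the pairs $\langle k,\sigma\rangle$ with $\sigma\in T_k$, together with one extra sink world $s$. Because the $\mathbf{CTL}$ clause demands $f(i)Rf(j)$ for all $i<j$ (not merely for consecutive indices), take $R$ to be the transitive extension relation: set $\langle k,\sigma\rangle R\langle k,\tau\rangle$ iff $\sigma\subsetneq\tau$. Also put $wRs$ for every $w\in W$, and $sRs$. This frame is $\Delta^0_1$ in $\langle T_k\rangle_k$ and serial. Define $v(w,p_0)=1$ iff $w\neq s$, and let $v$ be arbitrary (say constantly $1$) on the other variables. Apply Restricted VEL for $\mathbf{CTL}$ with $n=1$ (or whatever degree covers $\mathsf{EG}p_0$) to get $V$. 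Then set $X=\{k\mid V(\langle k,\langle\rangle\rangle,\mathsf{EG}p_0)=1\}$, which exists by $\Delta^0_1$-comprehension relative to $V$.

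It remains to verify that $V(\langle k,\langle\rangle\rangle,\mathsf{EG}p_0)=1$ iff $T_k$ has a path.
\begin{itemize}
\item[$(\Leftarrow)$] Given a path $g$ through $T_k$, the map $f(i)=\langle k,g\upharpoonright i\rangle$ satisfies $f(0)=\langle k,\langle\rangle\rangle$ and $f(i)Rf(j)$ for $i<j$. Moreover, $p_0$ holds along $f$, since $f$ never visits $s$.
\item[$(\Rightarrow)$] Suppose $f$ witnesses $\mathsf{EG}p_0$. Then $f$ never hits $s$, because $V(s,p_0)=0$. Since the only $R$-successors of $\langle k,\sigma\rangle$ other than $s$ are proper extensions within $T_k$, the sequence $f(i)=\langle k,\sigma_i\rangle$ is strictly $\subsetneq$-increasing, so $lh(\sigma_i)\geq i$. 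We then define $g(m)=\sigma_{m+1}(m)$. Using the chain property, $g\upharpoonright m\subseteq\sigma_m$ for all $m$. This can be checked in $\mathrm{RCA}_0$ by $\Sigma^0_0$-induction (the relevant statement is $\Delta^0_1$ in $f$), so each $g\upharpoonright m$ lies in $T_k$ and $g$ is a path.
\end{itemize}

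I expect the main obstacle to be the fine print of the $\mathbf{CTL}$ semantics as formalized. First, the condition $\forall i<j\,(f(i)Rf(j))$ forces a transitive-style relation, which is why $R$ is taken to be proper extension rather than immediate successor. Second, seriality must be ensured without creating spurious $\mathsf{EG}$-witnesses; the sink $s$ with $p_0$ false handles this. Third, one must be sure the degree/restriction convention for $\mathbf{CTL}$ makes $\mathsf{EG}p_0$ fall inside an $n$-restricted valuation for some concrete $n$. Once these are settled, the extraction of the path $g$ from $f$ is routine in $\mathrm{RCA}_0$.
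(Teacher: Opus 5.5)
Your proof is correct and follows the same route as the paper. You reduce via Lemma~\ref{pi11catree}, code all the $T_k$ into one serial frame, and read off $X$ from the value of $\mathsf{EG}p_0$ at the roots under a $1$-restricted valuation.

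The only real difference is the frame. The paper uses $W=\mathbb{N}\times\mathbb{N}^{<\mathbb{N}}$ with the immediate-successor relation and lets $p_0$ mark membership in $T_k$. That tacitly reads the $\mathsf{EG}$ clause as $f(i)Rf(i+1)$: under the literal clause $\forall i<j\,(f(i)Rf(j))$, that frame has no infinite chains, so $\mathsf{EG}p_0$ would be false everywhere. Your transitive proper-extension relation with a $p_0$-false sink works under either reading. You also spell out, via $g(m)=\sigma_{m+1}(m)$, how to extract an actual path from the chain of nodes, a step the paper glosses over.
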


\begin{proof}
By Lemma~\ref{pi11catree}, it is enough to show that for any sequence of trees $\langle T_{k}\mid k\in\mathbb{N}\rangle$, $T_{k}\subseteq\mathbb{N}^{<\mathbb{N}}$, there exists a set $X$ such that $\forall k(k\in X\leftrightarrow T_{k}\text{ has a path})$. 

Let $\langle T_{k}\subseteq\mathbb{N}^{<\mathbb{N}}\mid k\in\mathbb{N}\rangle$ be a sequence of trees, $W=\mathbb{N}\times\mathbb{N}^{<\mathbb{N}}$, and let $\varepsilon$ be the empty sequence in $\mathbb{N}^{<\mathbb{N}}$. We define $R\subseteq W\times W$ as follows: for all $\langle{k, \sigma\rangle}, \langle{k^{\prime}, \sigma^{\prime}\rangle}\in W$,
\begin{align*}
\langle{k, \sigma\rangle}R\langle{k^{\prime}, \sigma^{\prime}\rangle}\Leftrightarrow (k=k^{\prime})\land(\sigma\prec\sigma^{\prime})\land(lh(\sigma)+1=lh(\sigma^{\prime})).
\end{align*}
Then $(W, R)$ is a serial frame. Let $p_{0}\in\mathrm{Prop}$. We define $v: W\times\mathrm{Prop}\rightarrow 2$ as follows: for all $\langle{k, \sigma\rangle}\in W$,
\begin{align*}
&v(\langle{k, \sigma\rangle}, p_{0})=1\Leftrightarrow\sigma\in T_{k},\\
&v(\langle{k, \sigma\rangle}, p)=1,\text{ for all $p\in\mathrm{Prop}\setminus\{p_{0}\}$}.
\end{align*}

By Restricted VEL for $\mathbf{CTL}$, there exists a $1$-restricted $\mathbf{CTL}$-valuation $V$ on $(W, R)$ such that $V$ is an extension of $v$. We define $X=\{k\in\mathbb{N}\mid V(\langle{k, \varepsilon\rangle}, \mathsf{EG}p_{0})=1\}$. This $X$ is the desired set. We show that for all $k\in\mathbb{N}$, $k\in X\Leftrightarrow\text{$T_{k}$ has a path.}$ Fix $k\in\mathbb{N}$.
\vspace{2mm}

$(\Rightarrow):$ Assume that $k\in X$. By $V(\langle{k, \varepsilon\rangle}, \mathsf{EG}p_{0})=1$, there are paths, $f: \mathbb{N}\rightarrow W$, and  $g: \mathbb{N}\rightarrow\mathbb{N}^{<\mathbb{N}}$ such that for all $i\geq0$, $f(i)=\langle{k, g(i)\rangle}$, $lh(g(i))=i$, and $g(i)\in T_{k}$. Then we define $h: \mathbb{N}\rightarrow T_{k}$; $h(n)=g(n)$. Thus, $h$ is a path of $T_{k}$. 
\vspace{1mm}

$(\Leftarrow):$ Assume that $k\not\in X$. By $V(\langle{k, \varepsilon\rangle}, \mathsf{EG}p_{0})=0$, for paths, $f: \mathbb{N}\rightarrow W$ and $g: \mathbb{N}\rightarrow\mathbb{N}^{<\mathbb{N}}$, there exists $i\geq0$ such that if $f(i)=\langle{k, g(i)\rangle}$ and $lh(g(i))=i$, then $g(i)\not\in T_{k}$. To derive a contradiction, assume that $T_{k}$ has a path $h$. Then we define $F: \mathbb{N}\rightarrow W$; $F(i)=\langle{k, h(i)\rangle}$. Then $F$ is a path of $W$ and $h(i)\in T_{k}$. It is a contradiction by assumption. Thus, $T_{k}$ has no path. 
\end{proof}

Conversely, Restricted VEL for $\mathbf{CTL}$ is provable in $\Pi^1_1\text{-}\mathrm{CA}_0\, + \,\Sigma^1_2\text{-}\text{induction}$. It follows that VEL for $\mathbf{CTL}$ is provable in $\Sigma^1_2\text{-}\mathrm{DC}_0$.

\begin{thm}[The Restricted Valuation Extension Lemma for $\mathbf{CTL}$, $\Pi^1_1\text{-}\mathrm{CA}_0\, + \,\Sigma^1_2\text{-}\text{induction}$ ]\label{rvelctl}
For each serial frame $F=(W, R)$, $n\in\mathbb{N}$, and $v:W\times\mathrm{Prop}\rightarrow2$, there exists a $n$-restricted $\mathbf{CTL}$-valuation $V$ on $(W, R)$ such that $V$ is an extension of $v$.
\end{thm}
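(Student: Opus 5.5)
The proof follows Theorem~\ref{pextend}. We build the $n$-restricted $\mathbf{CTL}$-valuation level by level, replacing the Turing jump by the hyperjump-type operation that $\Pi^1_1$-$\mathrm{CA}_0$ supplies. Here $\mathrm{deg}$ is extended to $\mathrm{Fml}_{\mathbf{CTL}}$ by counting nestings of $\mathsf{AX}$, $\mathsf{EG}$, $\mathsf{EU}$, so that $\mathrm{Fml}_{\mathbf{CTL}}[k]$ and ``$k$-restricted'' make sense.

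We prove by $\Sigma^1_2$-induction on $k\le n$ the statement
\[
\exists V_k\,\bigl(V_k \text{ is a } k\text{-restricted } \mathbf{CTL}\text{-valuation on } (W,R) \text{ extending } v\bigr).
\]
The matrix (``$V_k$ satisfies the clauses for all formulas of degree $\le k$'') contains, through the $\mathsf{EG}$ and $\mathsf{EU}$ clauses, biconditionals between $V_k$-values and $\Sigma^1_1$ statements. The statement is therefore $\Sigma^1_2$, and $\Sigma^1_2$-induction is exactly what is needed.

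The base case $k=0$ is handled as in Step $0$ of Theorem~\ref{pextend}, by $\Delta^0_1$-comprehension on $F\oplus v$: propositional clauses are recursive in $v$ by recursion on formula length.

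For the step from $k$ to $k+1$, fix $V_k$. The values $V_{k+1}(w,\mathsf{AX}\varphi)$ for $\varphi\in\mathrm{Fml}_{\mathbf{CTL}}[k]$ are arithmetical in $F\oplus V_k$. The values for $\mathsf{EG}\varphi$ and $\varphi\mathsf{EU}\psi$ are $\Sigma^1_1$ in $F\oplus V_k$: they assert the existence of a path $f$ through $R$ satisfying an arithmetical condition in $V_k$. (For $\mathsf{EU}$ one could reduce to an arithmetical condition on finite paths extended by seriality, but that extension needs dependent choice; it is simplest to treat $\mathsf{EU}$ uniformly with $\mathsf{EG}$ as $\Sigma^1_1$.) $\Pi^1_1$-comprehension, equivalently $\Sigma^1_1$-comprehension, yields the set
\[
S=\{\langle w,\chi\rangle : \chi\in\{\mathsf{EG}\varphi,\ \varphi\mathsf{EU}\psi\},\ \varphi,\psi\in\mathrm{Fml}_{\mathbf{CTL}}[k],\ \text{the defining } \Sigma^1_1 \text{ condition holds}\}.
\]
We then define $V_{k+1}$ on the degree-$(k+1)$ modal formulas from $V_k$, $S$, and arithmetical comprehension (available in $\Pi^1_1$-$\mathrm{CA}_0$), and close it under $\rightarrow$ recursively, as in Step $0$. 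The checks that $V_{k+1}$ agrees with $V_k$ on $\mathrm{Fml}_{\mathbf{CTL}}[k]$ and satisfies all clauses are routine.

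The main obstacle is the induction itself. Unlike Theorem~\ref{pextend}, we cannot pass to a single uniform object $(F\oplus v)^{(\omega)}$ with indices, since iterating hyperjumps $n$ times for nonstandard $n$ is not available in $\Pi^1_1$-$\mathrm{CA}_0$ alone. This is why we induct on the $\Sigma^1_2$ existence statement, and the care needed is in confirming that the statement really is $\Sigma^1_2$, so that $\Sigma^1_2$-induction suffices. The outermost set quantifier is existential; the inner condition is a universal over formulas and worlds of biconditionals with $\Sigma^1_1$ sides, hence $\Pi^1_2$ at worst, but still within $\Sigma^1_2$ after the leading $\exists V_k$ once we observe it is actually $\Pi^1_1\wedge\Sigma^1_1$-type and bounded by $\Delta^1_2$. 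Taking $k=n$ gives the theorem.

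Finally, for full VEL for $\mathbf{CTL}$ in $\Sigma^1_2$-$\mathrm{DC}_0$ (which contains $\Pi^1_1$-$\mathrm{CA}_0$ and $\Sigma^1_2$-induction), we would apply $\Sigma^1_2$-dependent choice to this step relation to obtain a whole sequence $\langle V_k\rangle_k$ and take its union.
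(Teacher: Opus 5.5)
Your proposal is correct and follows the paper's proof essentially step for step: $\Sigma^1_2$-induction on $\exists V\,\mathrm{Eval}(k,V)$, with the inductive step obtained by applying $\Pi^1_1$-comprehension to the $\mathsf{AX}$, $\mathsf{EG}$, $\mathsf{EU}$ conditions over the previous $V_{k}$. Two small remarks: first, turning $\forall(w,\varphi)\,\Sigma^1_1$ into $\Sigma^1_1$, which is what makes your induction formula genuinely $\Sigma^1_2$, uses $\Sigma^1_1$-$\mathrm{AC}_0$ (available in $\Pi^1_1$-$\mathrm{CA}_0$, as the paper notes); second, your parenthetical about $\mathsf{EU}$ is inaccurate, since extending a finite $R$-path by seriality needs no choice (iterate the least-successor map, computable in $W\oplus R$), though this does not affect your argument.
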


\begin{proof}
Let $F=(W, R)$ be a serial frame and $v:W\times\mathrm{Prop}\rightarrow2$. Suppose $\mathrm{Eval}(n, V)$ is the formula stating that $V$ is an $n$-restricted valuation on $F$ extending $v$. Then, for each number $n$ and set $V$, $\mathrm{Eval}(n, V)$ is a $\Sigma^1_2$ sentence. This follows by writing out the definition of a $\mathbf{CTL}$-model using $\Sigma^1_1\text{-}\mathrm{AC}_0$ (that is, $\Pi^1_1\text{-}\mathrm{CA}_0$). Now, we consider proving that for each $n$, $\exists V \mathrm{Eval}(n, V)$ holds using $\Pi^1_1\text{-}\mathrm{CA}_0$ plus $\Sigma^1_2$-induction.
\vspace{1mm}

Step $0$: We define $V_{0}:W\times\mathrm{Fml}_{\mathbf{CTL}}[0]\rightarrow2$ as follows: for all $w\in W$ and $p\in\mathrm{Prop}$, $V_0(w, p)=v(w, p)$, and for all $w\in W$ and $\varphi, \psi\in\mathrm{Fml}_{\mathbf{CTL}}[0]$ for which the values of $V_{0}$ have already been defined, $V_{0}(w, \varphi\rightarrow\psi)=1-V_0(w, \varphi)(1-V_0(w, \psi))$. Then $V_{0}$ is a $0$-restricted valuation on $F$. Thus, $\exists V\mathrm{Eval}(0, V)$ holds.  
\vspace{1mm}

Step $k$: Assume that $\exists V\mathrm{Eval}(k-1, V)$ holds. Choose a $V_{k-1}$ such that $\mathrm{Eval}(k-1, V_{k-1})$ holds. We define the following sets.
\begin{align*}
AX_{k-1}=\Bigl\{\langle{w, \varphi\rangle}\in W\times\mathrm{Fml}_{\mathbf{CTL}}[k-1]\mid\forall v\in W(wRv\rightarrow V_{k-1}(v, \varphi)=1)\Bigr\},
\end{align*}
\begin{align*}
EG_{k-1}=\Bigl\{\langle{w, \varphi\rangle}\in W\times\mathrm{Fml}_{\mathbf{CTL}}[k-1]\mid \exists f: \mathbb{N}\rightarrow W\Bigl(f(0)=w\land\forall i, j\bigl(i<j\rightarrow f(i)Rf(j)\bigr)\\\land \bigl(\forall i\geq0 (V_{k-1}(f(i), \varphi)=1))\bigr)\Bigr)\Bigr\},
\end{align*}
\begin{align*}
EU_{k-1}=\Bigl\{\langle{w, \varphi, \psi\rangle}\in W\times\mathrm{Fml}_{\mathbf{CTL}}[k-1]\times\mathrm{Fml}_{\mathbf{CTL}}[k-1]\mid\exists f: \mathbb{N}\rightarrow W\Bigl(f(0)=w\land\forall i, j\bigl(i<j\rightarrow f(i)Rf(j)\bigr)\\\land \bigl(\exists i\geq0 \forall j<i(V_{k-1}(f(j), \varphi)=1\land V_{k-1}(f(i), \psi)=1))\bigr)\Bigr)\Bigr\}.
\end{align*}

By $\Pi^1_1\text{-}\mathrm{CA}_0$, such sets $AX_{k-1}$, $EG_{k-1}$, and $EU_{k-1}$ exist. Then we define $V_{k} : W\times\mathrm{Fml}_{\mathbf{CTL}}[k]\rightarrow 2$ as follows: fix $w\in W$ and $\varphi\in\mathrm{Fml}_{\mathbf{CTL}}[k]$. 

If $\mathrm{deg}(\varphi)\leq k-1$, then $V_{k}(w, \varphi)\coloneqq V_{k-1}(w, \varphi)$. Assume that $\mathrm{deg}(\varphi)=k$. We define the value of $V_{k}$ by the complexity of $\varphi$.
\begin{itemize}
    \item Let $\varphi\equiv\mathsf{AX}\psi$. If $\langle{w, \psi\rangle}\in AX_{k-1}$, then $V_{k}(w, \varphi)\coloneqq 1$, otherwise, $V_{k}(w, \varphi)\coloneqq 0$,
    \item Let $\varphi\equiv\mathsf{EG}\psi$. If $\langle{w, \psi\rangle}\in EG_{k-1}$, then $V_{k}(w, \varphi)\coloneqq 1$, otherwise, $V_{k}(w, \varphi)\coloneqq 0$,
    \item Let $\varphi\equiv\psi\mathsf{EU}\theta$. If $\langle{w, \psi, \theta\rangle}\in EU_{k-1}$, then $V_{k}(w, \varphi)\coloneqq 1$, otherwise, $V_{k}(w, \varphi)\coloneqq 0$.
\end{itemize}
For $w\in W$ and $\varphi, \psi\in\mathrm{Fml}_{\mathbf{CTL}}[k]$ for which the values of $V_{k}$ have already been defined, we set
\begin{align*}
V_{k}(w, \varphi\rightarrow\psi)\coloneqq1-V_{k}(w, \varphi)(1-V_{k}(w, \psi)).
\end{align*}
Then this $V_{k}$ is a $k$-restricted valuation on $F$ extending $v$. Thus, $\exists V\mathrm{Eval}(k, V)$ holds. 
\vspace{2mm}

By $\Sigma^1_2$-induction, for all $n$, $\exists V\mathrm{Eval}(n, V)$ holds. Thus, Restricted VEL for $\mathbf{CTL}$ holds.
\end{proof}

%\begin{cor}
%The Restricted Valuation Extension Lemma for $\mathbf{CTL}$-frames does not imply $\Sigma^1_2\text{-$\mathrm{AC}_0$}$ over $\mathrm{ACA}_0$.
%\end{cor}

\begin{cor}[The Valuation Extension Lemma for $\mathbf{CTL}$, $\Sigma^1_2$-$\mathrm{DC}_0$]
For each serial frame $F=(W, R)$ and $v:W\times\mathrm{Prop}\rightarrow2$, there exists a $\mathbf{CTL}$-valuation $V$ on $(W, R)$ such that $V$ is an extension of $v$.
\end{cor}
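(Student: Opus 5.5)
We derive the statement from Theorem~\ref{rvelctl} together with the standard facts about $\Sigma^1_2$-$\mathrm{DC}_0$ \cite{MR2517689}. First, $\Sigma^1_2$-$\mathrm{DC}_0$ proves $\Pi^1_1$-$\mathrm{CA}_0$: it implies $\Sigma^1_2$-$\mathrm{AC}_0$, hence $\Sigma^1_1$-$\mathrm{AC}_0$, and it yields $\Pi^1_1$-comprehension. Second, $\Sigma^1_2$-$\mathrm{DC}_0$ proves $\Sigma^1_2$-induction, and in fact $\Sigma^1_3$-induction. Hence Theorem~\ref{rvelctl} is available: for every $n$ there is an $n$-restricted $\mathbf{CTL}$-valuation extending $v$.

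Fix a serial frame $F=(W,R)$ and $v:W\times\mathrm{Prop}\rightarrow 2$. We cannot simply collect the restricted valuations from Theorem~\ref{rvelctl}, since that would be a choice over $\omega$ many sets. Instead we apply dependent choice to the relation
\[
\Phi(k,X,Y)\equiv \mathrm{Eval}(k,X)\rightarrow \bigl(\mathrm{Eval}(k+1,Y)\land Y\upharpoonright(W\times\mathrm{Fml}_{\mathbf{CTL}}[k])=X\bigr).
\]
As observed in the proof of Theorem~\ref{rvelctl}, $\mathrm{Eval}$ is $\Sigma^1_2$ (using $\Sigma^1_1$-$\mathrm{AC}_0$), so $\Phi$ is equivalent to a $\Sigma^1_2$ formula; to keep $\Phi$ in $\Sigma^1_2$ one may instead write the hypothesis as a $\Pi^1_2$ condition. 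We verify $\forall k\forall X\exists Y\,\Phi(k,X,Y)$ in two cases.
\begin{itemize}
\item If $\mathrm{Eval}(k,X)$ fails, any $Y$ works.
\item If $\mathrm{Eval}(k,X)$ holds, the Step $k+1$ construction of Theorem~\ref{rvelctl} applied to $V_k=X$ produces $Y$. It uses $\Pi^1_1$-$\mathrm{CA}_0$ to form $AX_k$, $EG_k$ and $EU_k$, and it extends $X$ by construction.
\end{itemize}
The key point is that the step extends a \emph{given} $V_k$, not merely some $V_k$.

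Applying $\Sigma^1_2$-$\mathrm{DC}_0$ (in the form with the index $k$) starting from the $V_0$ of Step $0$, we obtain a sequence $\langle V_k\mid k\in\mathbb{N}\rangle$ with $\Phi(k,V_k,V_{k+1})$ for all $k$. We then show $\forall k\,\mathrm{Eval}(k,V_k)$ by induction on $k$. This is a $\Sigma^1_2$-induction (or $\Pi^1_2$-induction, equivalent for these classes), which $\Sigma^1_2$-$\mathrm{DC}_0$ proves. It follows that the $V_k$ are coherent.

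Finally, set $V(w,\varphi)=V_{\mathrm{deg}(\varphi)}(w,\varphi)$, which exists by $\Delta^0_1$-comprehension relative to the sequence. We check the clauses of a $\mathbf{CTL}$-model.
\begin{itemize}
\item For $\mathsf{AX}\psi$, $\mathsf{EG}\psi$ and $\psi\mathsf{EU}\theta$ of degree $k+1$, the value is given by $V_{k+1}$. By coherence, $V_{k+1}$ agrees with $V$ on all formulas of degree $\le k$, so the semantic clauses referring to $V_k(f(i),\psi)$ transfer to $V$.
\item The $\rightarrow$ clause also holds, because degree is the maximum of the degrees of the two sides and coherence applies.
\end{itemize}

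The main obstacle is the complexity bookkeeping: checking that $\Phi$ falls within the $\Sigma^1_2$ class required by $\Sigma^1_2$-$\mathrm{DC}_0$, and that the final induction establishing $\mathrm{Eval}(k,V_k)$ for all $k$ stays within the induction available in $\Sigma^1_2$-$\mathrm{DC}_0$. I would handle this by using the $\Pi^1_1$ and $\Sigma^1_1$ normal forms of the sets $EG_k$ and $EU_k$ to bound the complexity of $\mathrm{Eval}$ precisely.
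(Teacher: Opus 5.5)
Your proof is correct, but it is organized differently from the paper's.

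\textbf{The paper's route.} It never applies dependent choice directly. It takes the \emph{conclusion} of Theorem~\ref{rvelctl}, namely $\forall n\exists V\,\mathrm{Eval}(n,V)$, which is obtained in $\Pi^1_1$-$\mathrm{CA}_0$ by $\Sigma^1_2$-induction. It then uses $\Sigma^1_2$-$\mathrm{AC}_0$ to pick a sequence $\langle V_n\rangle$ with $\mathrm{Eval}(n,V_n)$ for all $n$, glues the $V_n$ into a full valuation, and finally cites the equivalence of $\Sigma^1_2$-$\mathrm{AC}_0+\Sigma^1_2$-induction with $\Sigma^1_2$-$\mathrm{DC}_0$. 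Because the $V_n$ are chosen independently, the gluing tacitly relies on uniqueness of restricted valuations extending $v$, an induction on formulas that the paper does not spell out.

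\textbf{Your route.} You feed only the \emph{step} construction of Theorem~\ref{rvelctl} into a single instance of $\Sigma^1_2$-$\mathrm{DC}_0$. The extension requirement is built into $\Phi$, so coherence of the sequence comes for free. You need neither the uniqueness argument nor the $\mathrm{AC}+\mathrm{IND}=\mathrm{DC}$ equivalence, and your final gluing step is fully justified.

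\textbf{What each buys.} The paper's factorization locates each resource precisely. Induction alone already gives Restricted VEL for $\mathbf{CTL}$, and choice enters only in the final collection step; this is exactly what motivates the paper's open question about $\Sigma^1_2$-$\mathrm{AC}_0$. Your version concentrates the strength differently. In fact your closing induction on $\mathrm{Eval}(k,V_k)$ does not need $\Sigma^1_2$-induction. With the DC sequence as a parameter, the $\Sigma^1_1$ parts of $\mathrm{Eval}$ define sets by $\Pi^1_1$-$\mathrm{CA}_0$, so $\{k\mid\mathrm{Eval}(k,V_k)\}$ exists and ordinary set induction suffices. Hence everything beyond $\Pi^1_1$-$\mathrm{CA}_0$ sits in the one DC instance.

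\textbf{Minor points.}
\begin{itemize}
\item Your opening appeal to the conclusion of Theorem~\ref{rvelctl} is never used; only its Step construction is.
\item Drop the parenthetical claim that $\Sigma^1_2$-$\mathrm{DC}_0$ proves $\Sigma^1_3$-induction. It plays no role, and it is not a standard consequence of $\Sigma^1_2$-$\mathrm{DC}_0$.
\end{itemize}
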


\begin{proof}
Let $F=(W, R)$ be a serial frame and $v:W\times\mathrm{Prop}\rightarrow2$. By Theorem~\ref{rvelctl}, $\Pi^1_1\text{-}\mathrm{CA}_0 + \Sigma^1_2\text{-}\text{induction}$ implies $\forall n\exists V\mathrm{Eval}(n, V)$. Thus, by $\Sigma^1_2\text{-}\mathrm{AC}_0$, $\exists V\forall n\mathrm{Eval}(n, V_{n})$ holds, which yields VEL for $\mathbf{CTL}$. Since $\Sigma^1_2\text{-}\mathrm{AC}_0\, +\, \Sigma^1_2\text{-}\text{induction}$ is equivalent to $\Sigma^1_2\text{-}\mathrm{DC}_0$, we conclude that $\Sigma^1_2\text{-}\mathrm{DC}_0$ implies VEL for $\mathbf{CTL}$.
\end{proof}

Next, we analyze the logical strength of VEL for image-finite $\mathbf{CTL}$ from the viewpoint of computability theory. 

\begin{prop}\label{ac2bctlvel}
Let $A$ be a set and $k\in\omega$. Then there exist a computable $2$-branching frame $F=(W, R)$ and a $A$-computable $v : W\times\mathrm{Prop}\rightarrow 2$ such that for any valuation $V:W\times \mathrm{Fml}_{\mathbf{CTL}}\rightarrow 2$ on $(W, R)$ extending $v$, $V$ computes $A^{(2k+1)}$.
\end{prop}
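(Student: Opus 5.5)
Because the frame is only $2$-branching and computable, the complexity must be moved out of the frame and into the assignment $v$. The unbounded quantifiers are then simulated by the temporal operators $\mathsf{EU}$ and $\mathsf{AX}$/$\mathsf{EG}$ running along infinite paths. The plan is to encode each number $n$ as a finite path segment, so that an existential number quantifier becomes an $\mathsf{EU}$-formula. Concretely, I would use the full binary tree $W=2^{<\mathbb{N}}$ (tagged with the input $x$ and index $e$, as in Theorem~\ref{comframepr}), with $\sigma R\tau$ iff $\tau=\sigma^\frown i$ for $i\in\{0,1\}$. This frame is computable, serial and $2$-branching. A number $m$ is read off a path as a block $0^m1$: one follows the $0$-branch $m$ times and then takes a $1$-step, which acts as a separator. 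Propositional variables record the block structure, for example $q_j$ marking that we are at a node whose path has completed exactly $j$ blocks. Since this is determined by $\sigma$ alone, these $q_j$ can be made computable. A further variable $p_0$ is true at a node $\sigma$ encoding a full tuple $(x,e,n_1,\dots,n_{2k+1})$ iff the bounded matrix $\theta_e(\vec n,x,A)$ holds, and this is the only place where $A$ enters, making $v$ $A$-computable.

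Next I would translate the prefix $\exists n_1\forall n_2\cdots\exists n_{2k+1}$ into a $\mathbf{CTL}$ formula $\Phi_k$ by recursion from the inside out. For the innermost quantifier, $\exists n_{2k+1}$ with block count $2k$ becomes $\mathsf{EX}$-style "stay on $0$s, then take $1$, then $p_0$". Formally this is $(\mathrm{zero})\,\mathsf{EU}\,(\mathrm{one}\wedge p_0)$, where variables $\mathrm{zero}$ and $\mathrm{one}$ mark the last digit taken. Guarding with the level marker $q_j$ keeps the path within the current block. An existential quantifier over a block is thus $\mathrm{zero}\,\mathsf{EU}\,(\mathrm{one}\wedge\Psi)$. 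A universal quantifier over a block is its dual, $\neg(\mathrm{zero}\,\mathsf{EU}\,(\mathrm{one}\wedge\neg\Psi))$, which is definable from $\mathsf{EU}$ and negation. One must take care that $\mathsf{EG}$ does not admit the degenerate path that stays on $0$ forever; this is harmless because $\mathsf{EU}$ requires reaching a $1$-node. The result is a fixed formula $\Phi_k$ of standard size such that, by an external induction on the number of quantifiers (evaluated in $\omega$), $V(\langle x,e\rangle\text{-root},\Phi_k)=1$ iff $\exists n_1\forall n_2\cdots\exists n_{2k+1}\theta_e(\vec n,x,A)$. Each inductive step only uses the $\mathsf{EU}$ clause of the $\mathbf{CTL}$-model definition together with the observation that paths in the binary tree starting at a node correspond bijectively to infinite extensions.

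Finally, I would fix $e$ so that this formula defines $A^{(2k+1)}$. Then $n\in A^{(2k+1)}$ iff $V(r_{n,e},\Phi_k)=1$, where $r_{n,e}$ is the root of the tree for $(n,e)$, so $V$ computes $A^{(2k+1)}$; the roots are placed on disjoint binary trees, one for each pair. The main obstacle is the encoding step. One has to check that $\mathsf{EU}$ quantifies over exactly the block decodings of a single next number without leaking into later blocks, and that the truth of $p_0$ is correctly placed only at nodes completing $2k+1$ blocks. I would handle this by making $p_0$ false except at depth-$(2k+1)$-block nodes and by conjoining the level marker $q_j$ into each $\mathsf{EU}$ target.
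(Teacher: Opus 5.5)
\textbf{Verdict.} Your route differs from the paper's and can be made to work, but the key formula fails as you wrote it.

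\textbf{The defect.} In the $\mathsf{EU}$ clause, any witness with $i\geq 1$ needs the left operand at $f(0)$, that is, at the node where the formula is evaluated. Every block boundary $\tau=\sigma 0^{m}1$ ends in a $1$, so $\mathrm{zero}$ is false at $\tau$ and only $i=0$ is available. Consequently $\mathrm{zero}\,\mathsf{EU}\,(\mathrm{one}\wedge\Psi)$ at $\tau$ just says $V(\tau,\Psi)=1$, and the quantifier vanishes. Guarding the target with a level marker does not help. Depending on the index, it either leaves this collapse in place or makes the formula identically $0$ at $\tau$. So from the second quantifier on, your $\Phi_k$ does not track $\exists n_1\forall n_2\cdots$.

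\textbf{The repair.} Put the marker on the left, so that the left operand already holds at the block's starting node. At a node $\sigma$ with exactly $j$ completed blocks, use $q_j\,\mathsf{EU}\,(q_{j+1}\wedge\Psi)$ for $\exists$ and $\neg\bigl(q_j\,\mathsf{EU}\,(q_{j+1}\wedge\neg\Psi)\bigr)$ for $\forall$. Along any path from $\sigma$, the block count is non-decreasing and increases exactly at $1$-steps. Hence in any witness the first $q_{j+1}$-node is $\sigma 0^{m}1$ for some $m$. Conversely, each $m$ is realized by a path through $\sigma 0^{m}1$, since the tree is serial. So these formulas hold at $\sigma$ iff $V(\sigma 0^{m}1,\Psi)=1$ for some $m$, respectively for all $m$.

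With this change your external induction goes through. A single parity bit for the block count would already suffice, and $\mathrm{zero},\mathrm{one}$ become superfluous. Moreover, $p_0$ is only consulted at nodes $0^{n_1}1\cdots 0^{n_{2k+1}}1$, and $V$ computes $A^{(2k+1)}$.

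\textbf{Comparison with the paper.} The paper takes $W=\omega^{<\omega}$, where each node has an ``up'' successor $\sigma^\frown 0$ and a ``right'' successor (last entry incremented). A number $m$ is coded by $m$ horizontal steps inside one level rather than by a unary block. The paper uses only the parity marker $q$ ($lh(\sigma)$ odd) and one oracle variable $r$. It realizes $\exists$ by $q\,\mathsf{EU}\,(q\wedge\mathsf{AX}(\neg q\rightarrow\cdots))$ and $\forall$ by $\mathsf{EG}(\neg q\wedge\mathsf{AX}(q\rightarrow\cdots))$. This exploits the fact that the horizontal ray is the only path staying in $q$ (respectively $\neg q$); see Lemma~\ref{PSIlem}.

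Your version runs on the plain binary tree with only $\mathsf{EU}$ and Boolean connectives, obtaining $\forall$ by duality. It therefore needs a single path lemma and shows the lower bound already for the $\mathsf{EU}$-fragment. The paper's horizontal coding lets the frame itself enforce staying on one level, since right-steps preserve length. The cost is a lemma that must handle $\mathsf{EU}$, $\mathsf{AX}$ and $\mathsf{EG}$ together in both directions.
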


\begin{proof}
Let $A$ be a set and $k\in\omega$. Let $\{\exists n_{1}\forall n_{2}\exists n_{3}\dots\forall n_{2k}\exists n_{2k+1}\theta_{i}(\vec{n}, x, X)\}_{i\in\omega}$ be a computable listing of $\Sigma^{0}_{2k+1}$-formulas with no free variables except a number variable $x$ and a set variable $X$. (The $\theta_{i}$'s are bounded formulas).  Let $W=\omega^{<\omega}$. Then we define $R\subseteq W\times W$ as follows: for $\sigma, \tau\in W$, 
\begin{align*}
\sigma R\tau\Leftrightarrow&\Big(\sigma\preceq\tau\land(lh(\tau)=lh(\sigma)+1)\land(\tau_{lh(\tau)-1}=0)\Bigr)\\&\lor\Bigl((lh(\sigma)=lh(\tau)=n)\land(\sigma\upharpoonright(n-1)=\tau\upharpoonright(n-1))\land(\tau_{lh(\tau)-1}=\sigma_{lh(\sigma)-1}+1)\Bigr).
\end{align*} 

This gives the frame $(W, R)$. The following diagram illustrates the part of $(W, R)$ reachable from $\langle{n, e, 0\rangle}$. The transition relation $R$ extends from each node to the node immediately above it and to the node immediately to its right. 
\begin{center}
\begin{tikzpicture}[
    scale=0.9,
    node/.style={circle, fill=black, inner sep=1.5pt}, % Node size
    dots/.style={font=\Large} % Slightly larger ellipsis symbols
]

% ====================
% Nodes & Labels
% ====================

% Left column (x=0)
\node[node] (n00) at (0, 0) {};
\node[below left=2pt] at (n00) {$\langle n, e, 0 \rangle$};

\node[node] (n01) at (0, 2.5) {};
\node[left=4pt] at (n01) {$\langle n, e, 1 \rangle$};

\node[node] (n02) at (0, 5) {};
\node[left=4pt] at (n02) {$\langle n, e, 2 \rangle$};

\node[node] (n03) at (0, 7.5) {};
\node[left=4pt] at (n03) {$\langle n, e, 3 \rangle$};

% Middle column (x=4.5)
\node[node] (n10) at (4.5, 0) {};
\node[below=4pt] at (n10) {$\langle n, e, 0, 0 \rangle$};

\node[node] (n11) at (4.5, 2.5) {};
\node[below=4pt] at (n11) {$\langle n, e, 1, 0 \rangle$}; % Moved directly below

\node[node] (n12) at (4.5, 5) {};
\node[left=4pt] at (n12) {$\langle n, e, 1, 0, 0 \rangle$};

\node[node] (n13) at (4.5, 7.5) {};
\node[left=4pt] at (n13) {$\langle n, e, 1, 0, 1 \rangle$};

% Right column (x=9)
\node[node] (n20) at (9, 0) {};
\node[below=4pt] at (n20) {$\langle n, e, 0, 1 \rangle$};

\node[node] (n21) at (9, 2.5) {};
\node[below=4pt] at (n21) {$\langle n, e, 1, 1 \rangle$}; % Moved directly below

\node[node] (n22) at (9, 5) {};
\node[below=4pt] at (n22) {$\langle n, e, 1, 0, 0, 0 \rangle$};

\node[node] (n23) at (9, 7.5) {}; 
\node[left=4pt] at (n23) {$\langle n, e, 1, 0, 0, 0, 0 \rangle$};

% ====================
% Solid Lines
% ====================

% Vertical lines (solid parts)
\draw[thick] (n00) -- (n01) -- (n02) -- (n03);
\draw[thick] (n11) -- (n12) -- (n13);
\draw[thick] (n22) -- (n23);

% Horizontal lines (solid parts)
\draw[thick] (n00) -- (n10) -- (n20);
\draw[thick] (n01) -- (n11) -- (n21);
\draw[thick] (n12) -- (n22); % Reconnected

% ====================
% Continuation Dots (... and \vdots)
% ====================

% Horizontal continuations (short lines extending to the right and ...)
\draw[thick] (n02) -- ++(0.6, 0) node[right, dots] {$\cdots$};
\draw[thick] (n03) -- ++(0.6, 0) node[right, dots] {$\cdots$};

% Since n12 is connected, remove its continuation and keep only n13
\draw[thick] (n13) -- ++(0.6, 0) node[right, dots] {$\cdots$};

\draw[thick] (n20) -- ++(0.8, 0) node[right, dots] {$\cdots$};
\draw[thick] (n21) -- ++(0.8, 0) node[right, dots] {$\cdots$};
\draw[thick] (n22) -- ++(0.8, 0) node[right, dots] {$\cdots$};
\draw[thick] (n23) -- ++(0.8, 0) node[right, dots] {$\cdots$};

% Vertical continuations (short upward lines and vdots)
\draw[thick] (n03) -- ++(0, 0.4) node[above, dots] {$\vdots$};
\draw[thick] (n13) -- ++(0, 0.4) node[above, dots] {$\vdots$};
\draw[thick] (n23) -- ++(0, 0.4) node[above, dots] {$\vdots$};

% Add short upward lines and vdots from n10, n20, and n21
\draw[thick] (n10) -- ++(0, 0.4) node[above, dots] {$\vdots$};
\draw[thick] (n20) -- ++(0, 0.4) node[above, dots] {$\vdots$};
\draw[thick] (n21) -- ++(0, 0.4) node[above, dots] {$\vdots$};

\end{tikzpicture}
\end{center}
Fix $q, r\in\mathrm{Prop}$. Then we define $v:W\times\mathrm{Prop}\rightarrow2$ by the following: for all $\sigma\in W$,
\begin{align*}
&v(\sigma, q)=1\Leftrightarrow lh(\sigma)\text{ is odd},\\
&v(\sigma, r)=1\Leftrightarrow lh(\sigma)=2k+3\land\theta_{\sigma_{1}}(\sigma_{2}, \sigma_{3}, \dots, \sigma_{lh(\sigma)-1}, \sigma_{0}, A),\\
&v(\sigma, p)=1,\, \text{for all $p\in\mathrm{Prop}\setminus\{q, r\}$}.
\end{align*}

Fix a valuation $V:W\times \mathrm{Fml}_{\mathbf{CTL}}\rightarrow 2$ on $(W, R)$ extending $v$. We show that $A^{(2k+1)}\leq_{T}V$. We take a $\Sigma^{0}_{2k+1}$-formula $\varphi(x, A)$ such that for all $n$, $\varphi(n, A)\Leftrightarrow n\in A^{(2k+1)}$. Then, there exists $e$ such that $\varphi(x, A)\equiv\exists n_{1}\forall n_{2}\exists n_{3}\dots\forall n_{2k}\exists n_{2k+1}\theta_{e}(\vec{n}, x, A)$. 
\vspace{2mm}

We consider a specific nested $\mathbf{CTL}$-formula $\Psi_{2k+1}$ defined by
\[
\Psi_{2k+1} = q \textsf{EU} \bigl( q \land \textsf{AX} ( \neg q \rightarrow \textsf{EG} ( \neg q \land \textsf{AX} ( q \rightarrow \dots q \textsf{EU} ( q \land r ) \dots ) ) ) \bigr),
\]
where the nesting of the modal operators $q \textsf{EU} \bigl( q \land \textsf{AX} ( \neg q \rightarrow \textsf{EG} ( \neg q \land \textsf{AX} ( q \rightarrow \dots \bigr)$ is repeated exactly $k$ times, corresponding to the $k$ pairs of alternating quantifiers in $\varphi$. 

\begin{lem}\label{PSIlem}
Let $\psi\in\mathrm{Fml}_{\mathbf{CTL}}$, $\sigma\in W$, and let $\Psi \equiv q \mathsf{EU} \bigl( q \land \mathsf{AX} ( \neg q \rightarrow \mathsf{EG} ( \neg q \land \mathsf{AX} ( q \rightarrow \psi ) ) ) \bigr)$. Suppose that $lh(\sigma)\geq3$, $lh(\sigma)$ is odd, and $\sigma_{lh(\sigma)-1}=0$. Then the following holds.
\begin{align*}
V(\sigma, \Psi)=1\Leftrightarrow\exists m\forall s\Bigl(V(\langle{\sigma_{0}, \sigma_{1}, \dots, \sigma_{lh(\sigma)-2}, m, s, 0\rangle}, \psi)=1\Bigr).
\end{align*} 
\end{lem}

\begin{lemproof}
$(\Rightarrow)$: We assume that $V(\sigma, \Psi)=1$. Then there exist a path $f: \mathbb{N}\rightarrow W$ and $i\geq0$ such that $f(0)=\sigma$ and for all $j\leq i$, $V(f(j), q)=1$, and $V\bigl(f(i), \mathsf{AX} ( \neg q \rightarrow \mathsf{EG} ( \neg q \land \mathsf{AX} ( q \rightarrow \psi ) ) )\bigr)=1$. Take such an $f$. Then for all $j\leq i$, $lh(f(j))=lh(f(i))=lh(\sigma)$. Let $\tau=f(i)$ and $m=\tau_{lh(\tau)-1}$. Fix $s\geq0$. Note that for all $j\leq lh(\sigma)-2$, $\tau_{j}=\sigma_{j}$. Since $\tau R\langle{\tau_{0},\dots,\tau_{lh(\tau)-1}, 0\rangle}$, $V\bigl(\langle{\tau_{0},\dots,\tau_{lh(\tau)-1}, 0\rangle},    \mathsf{EG} ( \neg q \land \mathsf{AX} ( q \rightarrow \psi ) ) \bigr)=1$ holds. Then there exists a path $g: \mathbb{N}\rightarrow W$ such that $g(0)=\langle{\tau_{0},\dots,\tau_{lh(\tau)-1}, 0\rangle}$ and for all $j\geq0$, $V(g(j), \neg q)=1$ and $V\bigl(g(j), \mathsf{AX} ( q \rightarrow \psi )  \bigr)=1$ hold. Take such a $g$. Then for all $j\geq0$, $lh(g(j))=lh(g(0))$. Let $g(s)=\langle{\tau_{0},\dots,\tau_{lh(\tau)-1}, s\rangle}$. Then $V(\langle{\tau_{0},\dots,\tau_{lh(\tau)-1}, s, 0\rangle}, \psi)=1$ holds. Thus, $\exists m\forall s\bigl(V(\langle{\sigma_{0}, \sigma_{1}, \dots, \sigma_{lh(\sigma)-2}, m, s, 0\rangle}, \psi)=1\bigr)$ holds.
\vspace{2mm}

$(\Leftarrow)$: Assume that $V(\sigma, \Psi)=0$. Fix $m$. By assumption, for any path $f: \mathbb{N}\rightarrow W$ with $f(0)=\sigma$ and $i\geq0$, there exists $j\leq i$ such that $V(f(j), q)=0$ or $V\bigl(f(i), \mathsf{AX} ( \neg q \rightarrow \mathsf{EG} ( \neg q \land \mathsf{AX} ( q \rightarrow \psi ) ) )\bigr)=0$. We define a path $g$ as follows: for all $k$, $g(k)=\langle{\sigma_{0}, \sigma_{1}, \dots, \sigma_{lh(\sigma)-2}, k\rangle}$. Then, by assumption, $V\bigl(g(m), \mathsf{AX} ( \neg q \rightarrow \mathsf{EG} ( \neg q \land \mathsf{AX} ( q \rightarrow \psi ) ) )\bigr)=0$. Thus, $V\bigl(\langle{\sigma_{0}, \sigma_{1}, \dots, \sigma_{lh(\sigma)-2}, m, 0\rangle}, \mathsf{EG} ( \neg q \land \mathsf{AX} ( q \rightarrow \psi ) ) \bigr)=0$ holds. Then for any path $f: \mathbb{N}\rightarrow W$ with $f(0)=\langle{\sigma_{0}, \sigma_{1}, \dots, \sigma_{lh(\sigma)-2}, m, 0\rangle}$, there exists $s\geq0$ such that $V(f(s), \neg q)=0$ or $V\bigl(f(s), \mathsf{AX} ( q \rightarrow \psi )  \bigr)=0$ hold. We define a path $h$ as follows: for all $k$, $h(k)=\langle{\sigma_{0}, \sigma_{1}, \dots, \sigma_{lh(\sigma)-2}, m, k\rangle}$. Then, by assumption, there exists $s\geq0$ such that $V\bigl(h(s), \mathsf{AX} ( q \rightarrow \psi )  \bigr)=0$, that is, $V\bigl(\langle{\sigma_{0}, \sigma_{1}, \dots, \sigma_{lh(\sigma)-2}, m, s\rangle}, \mathsf{AX} ( q \rightarrow \psi )  \bigr)=0$. Thus, $\forall m\exists s\bigl(V(\langle{\sigma_{0}, \sigma_{1}, \dots, \sigma_{lh(\sigma)-2}, m, s, 0\rangle}, \psi)=0\bigr)$ holds. 
\end{lemproof}

Fix $\langle n, e, 0 \rangle \in W$. Then, by Lemma~\ref{PSIlem}, the chain of equivalences holds as follows:
\begin{align*}
V(\langle n, e, 0 \rangle, \Psi_{2k+1}) = 1
&\Leftrightarrow \exists n_{1} \forall n_{2} \dots \exists n_{2k-1}\forall n_{2k} \, V\bigl(\langle n, e, n_{1}, n_{2}, \dots ,n_{2k}, 0 \rangle, q \textsf{EU} ( q \land r )\bigr)=1 \\
&\Leftrightarrow \exists n_{1} \forall n_{2} \dots \exists n_{2k-1}\forall n_{2k} \exists n_{2k+1} \, V\bigl(\langle n, e, n_{1}, n_{2}, \dots ,n_{2k}, n_{2k+1} \rangle, r \bigr)=1 \\
&\Leftrightarrow \exists n_{1} \forall n_{2} \dots \exists n_{2k-1}\forall n_{2k} \exists n_{2k+1} \, \theta_{e}(n_{1}, \dots, n_{2k+1}, n, A) \\
&\Leftrightarrow \varphi(n, A) \\
&\Leftrightarrow n \in A^{(2k+1)}.
\end{align*}
Thus, $V$ computes $A^{(2k+1)}$.
\end{proof}

\begin{cor}\label{ctla2branch}
Let $A$ be a set. Then there exist a computable $2$-branching frame $F=(W, R)$ and a $A$-computable $v : W\times\mathrm{Prop}\rightarrow 2$ such that for any valuation $V:W\times \mathrm{Fml}_{\mathbf{CTL}}\rightarrow 2$ on $(W, R)$ extending $v$, $V$ computes $A^{(\omega)}$.
\end{cor}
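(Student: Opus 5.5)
The plan is to merge the frames of Proposition~\ref{ac2bctlvel}, one for each $k\in\omega$, into a single computable $2$-branching frame. The quantifier depth will be encoded in the root of each component, as in the uniform construction behind Corollary~\ref{vcompomega}. Concretely, take $W=\omega^{<\omega}$ and read $\sigma$ as $\langle k, n, e, \dots\rangle$, with $\sigma_0=k$ the depth, $\sigma_1=n$ the input and $\sigma_2=e$ the formula index. Shifting all coordinates by one, keep the relation $R$ of Proposition~\ref{ac2bctlvel}: the ``up'' successor appends a $0$, and the ``right'' successor increments the last entry. Both successors are computed without reference to $k$, so $F$ is computable and $2$-branching. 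We may require $lh(\sigma)\ge 3$ for the coding to start, and give shorter strings the same two edges so that the frame stays serial.

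Next, define $v$ uniformly. Set $v(\sigma,q)=1$ iff $lh(\sigma)$ is odd, and set
\[
v(\sigma,r)=1 \Leftrightarrow lh(\sigma)=2\sigma_0+4\land\theta^{\sigma_0}_{\sigma_2}(\sigma_3,\dots,\sigma_{lh(\sigma)-1},\sigma_1,A),
\]
where $\{\theta^k_i\}_{i,k}$ is a computable double listing of the bounded matrices of $\Sigma^0_{2k+1}$ formulas. Parities may need shifting so that ``$q$ holds'' again marks the even-indexed (existential) layers. Because the lengths and the matrices depend computably on $\sigma_0$, $v$ is $A$-computable; only the length test and one bounded evaluation relative to $A$ are needed.

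Now fix a valuation $V$ extending $v$. For each $k$, the CTL formula $\Psi_{2k+1}$ of Proposition~\ref{ac2bctlvel} is computable from $k$. Lemma~\ref{PSIlem} applies verbatim inside the component with first coordinate $k$, since paths never change the prefix coordinates and only modify the last two. The same chain of equivalences then gives
\[
V(\langle k,n,e_k,0\rangle,\Psi_{2k+1})=1\Leftrightarrow n\in A^{(2k+1)},
\]
where $e_k$ is an index, computable in $k$, for a fixed $\Sigma^0_{2k+1}$ definition of $A^{(2k+1)}$. Consequently $A^{(\omega)}=\{\langle m,n\rangle : n\in A^{(m)}\}$ is computable from $V$: given $\langle m,n\rangle$, we choose $k$ with $2k+1\ge m$ and compute a reduction of $A^{(m)}$ to $A^{(2k+1)}$ uniformly in $m$.

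The main obstacle is verifying that Lemma~\ref{PSIlem} still holds once the extra leading coordinate is added. In particular, the ``right'' moves at a given length must not reach nodes whose $r$-value is governed by a different $k$. The paths that realize $\mathsf{EU}$ and $\mathsf{EG}$ must also stay within the intended layer, so the $q$-parity condition still forces the existential and universal choices exactly as before. I would first check the base case, $q\mathsf{EU}(q\land r)$ at length $2k+3$ in the shifted coordinates, and then check that the induction of the lemma uses only the prefix up to $lh(\sigma)-2$, which carries $k$ unchanged. The uniformity of the indices $e_k$ and of the reductions $A^{(m)}\le_T A^{(2k+1)}$ is routine by the relativized uniform $s$-$m$-$n$ and jump theorems.
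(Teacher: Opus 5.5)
Your proposal is correct and is essentially the argument the paper intends. The paper's proof only says to argue as in Proposition~\ref{ac2bctlvel}, and you fill this in: keep the same computable $2$-branching frame, make $v$ uniform in $k$ by storing $k$ in the root coordinate, and read off $n\in A^{(2k+1)}$ from $V(\langle k,n,e_k,0\rangle,\Psi_{2k+1})$ uniformly in $k$. One small correction: the parity flip is required, not optional. The root $\langle k,n,e_k,0\rangle$ now has even length $4$, so you must set $v(\sigma,q)=1$ iff $lh(\sigma)$ is even; then Lemma~\ref{PSIlem} applies with ``odd'' replaced by ``even'' and $lh(\sigma)\geq 3$ replaced by $lh(\sigma)\geq 4$.
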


\begin{proof}
This follows by an argument similar to that of Proposition~\ref{ac2bctlvel}.
\end{proof}

In fact, VEL for image-finite $\mathbf{CTL}$ is equivalent to $\mathrm{ACA}^{+}_0$ over $\mathrm{RCA}_0$.

\begin{thm}
The following statements are equivalent over $\mathrm{RCA}_0$:
\begin{enumerate}
    \item $\mathrm{ACA}^{+}_0$,
    \item The Valuation Extension Lemma for image-finite $\mathbf{CTL}$.
\end{enumerate}
\end{thm}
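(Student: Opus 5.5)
The proof splits into the two implications.

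\textbf{$(1)\Rightarrow(2)$.} I would combine the construction of Theorem~\ref{pextend} with the image-finite argument for $\mathrm{ACA}_0$. Fix an image-finite serial frame $F=(W,R)$ and $v$.

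First, by arithmetical comprehension I form the bounding set $A$ exactly as in the image-finite theorem, so that $wR$ is uniformly computable from $F\oplus A$. Call this successor-listing function $S$. The key point is that over a finitely branching tree the path quantifiers in $\mathsf{EG}$ and $\mathsf{EU}$ become arithmetical relative to $F\oplus A\oplus V_{k-1}$:
\begin{itemize}
\item $V(w,\psi\mathsf{EU}\theta)=1$ iff there is a finite $R$-chain from $w$ through $\psi$-worlds ending at a $\theta$-world; this is $\Sigma^0_1$ in $V_{k-1}$. The chain extends to an infinite path by seriality, using $\Sigma^0_1$ choice of successors, which is available in $\mathrm{RCA}_0$ since $F$ is given.
\item $V(w,\mathsf{EG}\psi)=1$ iff the tree of finite $R$-chains from $w$ staying in $\psi$-worlds is infinite. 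By K\"onig's lemma, this is equivalent to: for every $n$ there is such a chain of length $n$.
\end{itemize}
The tree in question is finitely branching, and its branching is bounded by the function given by $S$. So it is $\Pi^0_1$ in $F\oplus A\oplus V_{k-1}$, and, since $\mathrm{ACA}_0$ proves K\"onig's lemma for bounded trees, we can extract a path. Thus each $V_k$ is arithmetical, indeed $\Delta^0_2$, in $F\oplus A\oplus v\oplus V_{k-1}$.

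Next I proceed level by level as in Theorem~\ref{pextend}. Using $(F\oplus A\oplus v)^{(\omega)}$, which exists by $\mathrm{ACA}^{+}_0$, I compute uniformly a sequence of indices $e_k$ with $V_k$ computable from $(F\oplus A\oplus v)^{(2k)}$, say. I then verify by arithmetical induction that each $V_k$ satisfies the CTL clauses. The $\mathsf{EG}$ direction needs the K\"onig argument just described, carried out inside $\mathrm{ACA}_0$. Finally I glue the $V_k$ into $V$.

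\textbf{$(2)\Rightarrow(1)$.} First, VEL for image-finite $\mathbf{CTL}$ implies $\mathrm{ACA}_0$ by the preceding corollary, so we may reason in $\mathrm{ACA}_0$. I then formalize Corollary~\ref{ctla2branch}. Given $X$, take the computable 2-branching frame there, with $v$ built from $X$ by stacking, for each $k$, the $\Sigma^0_{2k+1}$ coding with $r$-labels depending on the level. Each node has exactly two successors, so the frame is image-finite and serial.

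Apply VEL to get $V$. Then the set
\[
\{\langle k,n\rangle \mid V(\langle k,n,e_k,0\rangle,\Psi_{2k+1})=1\}
\]
is $\Delta^0_1$ in $V$. One shows it satisfies the defining conditions of the $\omega$-jump: the $k$-th column equals $X^{(2k+1)}$, hence codes the jump hierarchy. This uses Lemma~\ref{PSIlem} applied uniformly in $k$, proved by induction on nesting depth. The induction is arithmetical in $V$ and fine in $\mathrm{ACA}_0$, given the formalization of $\mathrm{TJ}$ and of jump hierarchies as in Simpson.

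\textbf{Main obstacle.} The delicate part is the $\mathsf{EG}$ clause in $(1)\Rightarrow(2)$. One must check that finite branching, with the branching witnessed by $A$, really makes $\mathsf{EG}$ $\Pi^0_1$ relative to the previous level. One must also check that, inside $\mathrm{ACA}_0$, the K\"onig-lemma step yields a genuine function $f:\mathbb{N}\to W$ as the definition requires. A secondary issue is making the construction uniform in $k$, so that one oracle $(F\oplus A\oplus v)^{(\omega)}$ suffices.
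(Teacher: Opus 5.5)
Your proposal is correct and follows essentially the same route as the paper. For $(1)\Rightarrow(2)$ it uses a level-by-level construction from the $\omega$-jump, replaces the path clauses for $\mathsf{EG}$ and $\mathsf{EU}$ by arithmetical queries, and verifies the $\mathsf{EG}$ clause by K\"onig's lemma for finitely branching trees in $\mathrm{ACA}_0$; for $(2)\Rightarrow(1)$ it first obtains $\mathrm{ACA}_0$ and then formalizes Corollary~\ref{ctla2branch}, and your bounding set $A$ merely sharpens each $V_k$ to $\Delta^0_2$ in the previous level, whereas the paper decides query (B) directly with $(F\oplus v)^{(\omega)}$.
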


\begin{proof}
$(2)\Rightarrow(1)$: By Corollary~\ref{ctla2branch}.
\vspace{2mm}

$(1)\Rightarrow(2)$: Let $F=(W, R)$ be an image-finite serial frame and $v: W\times\mathrm{Prop}\rightarrow 2$. Then, we construct a valuation $V: W \times \mathrm{Fml}_{\mathbf{CTL}} \to 2$ extending $v$ in a bottom-up manner, using $(F\oplus v)^{(\omega)}$ as an oracle. The construction is similar to that in Theorem~\ref{pextend}. Step $0$ is carried out in the same way as in Theorem~\ref{pextend}; we describe Step $k$.
\vspace{2mm}

Step $k$: We assume that $V_{k-1}: W \times \mathrm{Fml}_{\mathbf{CTL}}[k-1] \to 2$ is defined and $e_{k-1}$ is an index of $V_{k-1}$. Fix $w\in W$ and $\varphi,\psi\in\mathrm{Fml}_{\mathbf{CTL}}[k-1]$. Then, using $(F \oplus v)^{(\omega)}$ as an oracle, we determine the value by answering the following queries.
\vspace{3mm}

$(\mathrm{A})$ $\forall u \in W (wRu \rightarrow V_{k-1}(u, \varphi)=1)$ holds ? If yes, then $V_{k}(w, \mathsf{AX}\varphi)=1$, otherwise, $V_{k}(w, \mathsf{AX}\varphi)=0$. 
\vspace{1mm}

$(\mathrm{B})$ $\forall n\geq1\exists \sigma\in\mathrm{Sec}^{=n}(W)\Bigl((\sigma_{0}=w)\land\forall i< n(V_{k-1}(\sigma_{i}, \varphi)=1)\Bigr)$ holds ? If yes, then $V_{k}(w, \mathsf{EG}\varphi)=1$, otherwise, $V_{k}(w, \mathsf{EG}\varphi)=0$. 
\vspace{1mm}

$(\mathrm{C})$ $\exists n\geq1\exists \sigma\in\mathrm{Sec}^{=n}(W)\Bigl((\sigma_{0}=w)\land(\forall i<n(V_{k-1}(\sigma_{i}, \varphi)=1))\land(V_{k-1}(\sigma_{n-1}, \psi)=1)\Bigr)$ holds ? If yes, then $V_{k}(w, \varphi\mathsf{EU}\psi)=1$, otherwise, $V_{k}(w, \varphi\mathsf{EU}\psi)=0$. 
\vspace{3mm}

Each of the queries $(\mathrm{A})$, $(\mathrm{B})$, and $(\mathrm{C})$ is arithmetical with $(F \oplus v)^{(k-1)}$ as a parameter. Therefore, whether they hold can be decided by using $(F \oplus v)^{(\omega)}$. Using $(F \oplus v)^{(\omega)}$ as an oracle, we compute an index $e_{k}$ for the $k$-restricted valuation $V_{k} : W \times \mathrm{Fml}_{\mathbf{CTL}}[k] \to 2$ defined in this way. 
\vspace{1mm}

This construction can be carried out in $\mathrm{ACA}_0$ using $(F\oplus v)^{(\omega)}$ as an oracle. In this construction, we have the indices $E=\{e_{k}\}_{k}$. The rest is the same as in Theorem~\ref{pextend}.
\vspace{2mm}

Next, we show that the function $V$ obtained by this construction is indeed a valuation on $(W, R)$. To this end, it suffices to verify that the value of $\mathsf{EG}\varphi$ determined in $(\mathrm{B})$ satisfies the definition of a valuation. That is, fixing any $w\in W$ and $\varphi\in\mathrm{Fml}_{\mathbf{CTL}}$, it is enough to show the equivalence of the following conditions:
\begin{enumerate}
    \item $\forall n\geq1\exists \sigma\in\mathrm{Sec}^{=n}(W)(\sigma_{0}=w\land\forall i< n(V_{k}(\sigma_{i}, \varphi)=1))$,
    \item  $\exists f: \mathbb{N}\rightarrow W\Bigl(f(0)=w\land\forall i, j\bigl(i<j\rightarrow f(i)Rf(j)\bigr)\land \bigl(\forall i\geq0 (V(f(i), \varphi)=1))\bigr)\Bigr)$.
\end{enumerate}

It is clear that $(2)$ implies $(1)$. We show that $(1)$ implies $(2)$. Consider a tree rooted at $w$ whose branches are formed by transitions to states satisfying $\varphi$. Since the frame is image-finite, this tree is finitely branching. Moreover, by $(1)$, this tree is infinite. Therefore, by K\"onig's Lemma (that is, $\mathrm{ACA}_0$), we obtain an infinite path $f$ starting from $w$. Thus, $(2)$ holds.
\end{proof}

In fact, if we do not require the valuation to extend a given assignment of propositional variables, it is easy to define a $\mathbf{CTL}$-valuation on a serial frame.

\begin{thm}[$\mathrm{RCA}_0$]
For each serial frame $F=(W, R)$, there exists a $\mathbf{CTL}$-valuation $V$ on $(W, R)$.
\end{thm}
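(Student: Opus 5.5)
The plan is to avoid any bottom-up construction and instead choose an assignment under which every $\mathbf{CTL}$-formula has the same truth value at every world. Set $v(w,p)=1$ for all $w\in W$ and $p\in\mathrm{Prop}$. Define a function $c:\mathrm{Fml}_{\mathbf{CTL}}\rightarrow 2$ by primitive recursion on the construction of formulas:
\begin{align*}
&c(p)=1 \text{ for } p\in\mathrm{Prop}, \quad c(\bot)=0, \quad c(\varphi\rightarrow\psi)=1-c(\varphi)(1-c(\psi)),\\
&c(\mathsf{AX}\varphi)=c(\varphi), \quad c(\mathsf{EG}\varphi)=c(\varphi), \quad c(\varphi\mathsf{EU}\psi)=c(\psi).
\end{align*}
This recursion is primitive recursive, so $c$ exists in $\mathrm{RCA}_0$. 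Then put $V(w,\varphi)=c(\varphi)$, which exists by $\Delta^0_1$-comprehension. The clauses for $\bot$ and $\rightarrow$ hold by definition.

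The remaining work is to check the three modal clauses at each $w$. The key ingredient is that from every $w$ there is an infinite path, obtained in $\mathrm{RCA}_0$ by primitive recursion: let $f_w(0)=w$ and let $f_w(i+1)$ be the least $u\in W$ with $f_w(i)Ru$, which exists by seriality.

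For $\mathsf{AX}\varphi$, seriality gives a successor of $w$. Since $V(\cdot,\varphi)$ is the constant $c(\varphi)$, the condition $\forall u(wRu\rightarrow V(u,\varphi)=1)$ holds iff $c(\varphi)=1$.

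For $\mathsf{EG}\varphi$: if $c(\varphi)=1$, then $f_w$ witnesses the existential. If $c(\varphi)=0$, then any path fails already at $i=0$, since $V(f(0),\varphi)=V(w,\varphi)=0$.

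For $\varphi\mathsf{EU}\psi$: if $c(\psi)=1$, take $f_w$ and $i=0$; the condition on $j<0$ is vacuous. If $c(\psi)=0$, no index $i$ satisfies $V(f(i),\psi)=1$. So the value is $c(\psi)$, as required.

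I expect the main obstacle to be the literal path condition in the paper's definition, $\forall i,j(i<j\rightarrow f(i)Rf(j))$. For a non-transitive frame this demands a transitive chain, which $f_w$ need not be, and such a chain may exist from some worlds but not others. That would break the uniformity of $\mathsf{EG}\varphi$ when $c(\varphi)=1$. I would read this condition as the intended successive-step condition $f(i)Rf(i+1)$, which is what the standard $\mathbf{CTL}$ semantics and the proofs around Lemma~\ref{PSIlem} actually use; under that reading $f_w$ is a path and the argument goes through. The $\mathsf{EU}$ clause is unaffected either way, since with $i=0$ only $f(0)=w$ matters; even under the literal reading the constant sequence would suffice if $wRw$, but that is not available in general. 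With the successive-step reading, no induction beyond the definition of $c$ is needed, and the whole proof stays within $\mathrm{RCA}_0$.
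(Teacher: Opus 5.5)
Your proof is correct and is essentially the paper's argument. Your $c$ is exactly the valuation $V_a(a,\cdot)$ of the one-point reflexive frame with the all-true assignment, which the paper copies to every world of $F$; your explicit use of seriality (a successor for $\mathsf{AX}$, the least-successor path $f_w$ for $\mathsf{EG}$ and $\mathsf{EU}$) supplies the verification the paper leaves implicit. Your caveat about the path condition is well taken and applies equally to the paper's proof, which, like Lemma~\ref{PSIlem} and the $\Pi^1_1$-$\mathrm{CA}_0$ reversal, only works under the successive-step reading $f(i)Rf(i+1)$; note, though, that under the literal reading the $\mathsf{EU}$ clause is not actually unaffected, since it too needs some infinite path from $w$.
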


\begin{proof}
Let $F$ be a serial frame. We define $v: W\times\mathrm{Prop}\rightarrow 2$ as follows: for all $w\in W$ and $p\in\mathrm{Prop}$, $v(w, p)=1$; that is, $v$ is a trivial assignment for propositions. Now, consider a frame consisting of a singleton set with a self-loop, denoted by $F_{a}=(W_{a}, R_{a})=(\{a\}, \{\langle{a, a\rangle}\})$. On this frame, let the proposition valuation $v_{a}: W_{a}\times\mathrm{Prop}\rightarrow 2$ also be the trivial one. Then, the valuation $V_{a}$ on $F_{a}$ can be constructed recursively. We now define $V:W\times\mathrm{Fml}_{\mathbf{CTL}}\rightarrow 2$ as follows: for each state $w\in W$ of $F$ and each $\varphi\in\mathrm{Fml}_{\mathbf{CTL}}$, let $V(w, \varphi)\coloneqq V_{a}(a, \varphi)$. Then, $V$ is indeed a $\mathbf{CTL}$-valuation on $F$.
\end{proof}

\subsection{The Valuation Extension Lemma for $\mathbf{LTL}$}

Next, we consider VEL for $\mathbf{LTL}$. 
\vspace{2mm}

\begin{state}[The Valuation Extension Lemma for $\mathbf{LTL}$]
For each $\mathbf{LTL}$-frame $F=(S, R)$ and $v:S\times\mathrm{Prop}\rightarrow2$, there exists a $\mathbf{LTL}$-valuation $V$ on $(S, R)$ such that $V$ is an extension of $v$.
\end{state}
\vspace{2mm}

First, we analyze it from the viewpoint of computability theory. The preceding arguments show that, for some computable frame and computable assignment of propositional variables on it, any extending valuation computes $\emptyset^{\prime}$. Moreover, one can construct a computable frame and a computable assignment of propositional variables on it such that any extending valuation computes $\emptyset^{\prime\prime}$.

\begin{prop}\label{ltla2comp}
Let $A$ be a set. Then there exist a computable $\mathbf{LTL}$-frame $F=(S, R)$ and a $A$-computable $v: S\times\mathrm{Prop}\rightarrow2$ such that for any valuation $V:S\times \mathrm{Fml}_{\mathbf{LTL}}\rightarrow 2$ on $(S, R)$ extending $v$, $V$ computes $A^{\prime\prime}$.
\end{prop}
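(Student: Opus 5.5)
The plan is to reduce $A''$ to a $\Sigma^0_2$ \emph{finiteness} property along the single path. Such a property is expressible in $\mathbf{LTL}$ by ``eventually always'' ($\mathsf{FG}$), using only $\mathsf{U}$ and $\neg$. The obstacle that forces this detour is the following. The direct approach of Theorem~\ref{comframepr} would need a branching frame to realise the quantifier $\exists m\forall s$. On a linear frame, a formula evaluated at $s_i$ can only quantify over later positions, and it cannot ``remember'' a chosen witness $m$. So I would not encode $\exists m\forall s\,\theta$ literally. Instead I would use the $\Sigma^0_2$-completeness of finiteness: the standard normal form for $\Sigma^{0,A}_2$ sets says there is an $A$-computable set $B\subseteq\mathbb{N}\times\mathbb{N}$ such that
\[
n\in A''\iff \{j\mid \langle n,j\rangle\in B\}\text{ is finite}.
\]
For example, let $B$ contain $\langle n,j\rangle$ iff a new element is enumerated at stage $j$ into $W^{A}_{g(n)}$, where $g$ is the computable function reducing $A''$ to $\mathrm{Fin}^{A}$.

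Concretely, take $S=\{s_i\}_{i\in\mathbb{N}}$ with $s_i=i$ and $R=\{\langle i,j\rangle\mid i<j\}$. This is a computable $\mathbf{LTL}$-frame. Fix an enumeration $\{p_n\}_{n}$ of $\mathrm{Prop}$ and define $v(s_j,p_n)=1\Leftrightarrow\langle n,j\rangle\in B$. This $v$ is $A$-computable because $B$ is. Set $\top\equiv\bot\rightarrow\bot$, $\mathsf{F}\chi\equiv\top\mathsf{U}\chi$ and $\mathsf{G}\chi\equiv\neg\mathsf{F}\neg\chi$. Consider the formulas
\[
\Phi_n\equiv\mathsf{F}\mathsf{G}\neg p_n=\top\mathsf{U}\neg(\top\mathsf{U}p_n).
\]
Note that the map $n\mapsto\Phi_n$ is computable.

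The key steps, in order, are as follows.
\begin{enumerate}
\item Unfold clause 6 of the definition of an $\mathbf{LTL}$-model for an arbitrary valuation $V$ extending $v$. This gives $V(s_i,\mathsf{F}\chi)=1\Leftrightarrow\exists n'\geq i\,V(s_{n'},\chi)=1$, and hence $V(s_i,\mathsf{G}\chi)=1\Leftrightarrow\forall j\geq i\,V(s_j,\chi)=1$, using the clause for $\rightarrow$ to handle negation.
\item Conclude that $V(s_0,\Phi_n)=1\Leftrightarrow\exists m\forall j\geq m\,v(s_j,p_n)=0\Leftrightarrow\{j\mid\langle n,j\rangle\in B\}$ is finite $\Leftrightarrow n\in A''$.
\item Since $n\mapsto\langle s_0,\Phi_n\rangle$ is computable, $A''\leq_T V$, uniformly in $V$.
\end{enumerate}

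The main step to get right is the normal form for $B$. If one prefers to avoid citing $\mathrm{Fin}$-completeness, $B$ can be built directly from the $\Sigma^0_2$ definition $\exists m\forall s\,\theta(m,s,n,A)$ by the usual ``movable marker'' construction, working in stages $j$:
\begin{itemize}
\item keep a current candidate $m_j$, starting from $m_0=0$;
\item if $\theta(m_j,s,n,A)$ fails for some $s\leq j$, put $\langle n,j\rangle$ into $B$ and set $m_{j+1}=m_j+1$;
\item otherwise leave $B$ unchanged at stage $j$ and keep $m_{j+1}=m_j$.
\end{itemize}
Then $B$ is $A$-computable, since each stage checks a bounded formula at finitely many arguments. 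If $\exists m\forall s\,\theta$ holds, the marker stops moving at the least such $m$, so only finitely many $j$ enter $B$. Conversely, if the marker stops at some $m$, then no failure of $\theta(m,s,n,A)$ is ever found, so $\forall s\,\theta(m,s,n,A)$ holds. This verification is the only nontrivial point; the $\mathbf{LTL}$ part is the routine unfolding in steps 1 and 2.
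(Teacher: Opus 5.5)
Your proof is correct and is essentially the paper's argument: the paper also takes $S=\omega$ with $R={<}$, lets $v(s,p_e)$ flag the stages at which a new convergence of $\Phi^{A}_{e}$ occurs (which is your example $B$ built from $W^{A}_{g(n)}$), and reads off the $\Pi^0_2(A)$-complete set $\mathrm{INF}^{A}$ from $V(0,\mathsf{GF}p_e)$, where $\mathsf{GF}p_e\equiv\neg(\top\mathsf{U}\neg(\top\mathsf{U}p_e))$ is literally the negation of your $\Phi_n$. The only difference is that you work with the dual $\Sigma^0_2$ side ($\mathrm{Fin}^{A}$, or your movable-marker normal form). This gives a direct many-one reduction of $A''$ instead of passing through $\mathrm{INF}^{A}\equiv_T A''$, so the change is cosmetic.
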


\begin{proof}
Let $A$ be a set. For each $e$, let $\Phi_{e}$ denote the $e$-th unary Turing functional. We consider the $\Pi^0_2(A)$-complete set $\mathrm{INF}^{A}$, which is characterized by the following condition:
\begin{align*}
e\in\mathrm{INF}^{A}:\Leftrightarrow\forall s\exists x>s(\Phi^{A}_{e}(x)\downarrow).
\end{align*}

Let $S=\omega$. We define $R=<$, that is, $R$ is the usual order relation on $\omega$.
\vspace{2mm}

Let $\{p_{e}\}_{e\in\omega}$ be an enumeration of $\mathrm{Prop}$. Then we define $v : S\times\mathrm{Prop}\rightarrow2$ by the following: for all $s\in S$ and $e\in\omega$,
\begin{align*}
v(s, p_{e})=1\Leftrightarrow\exists x < s (\Phi^{A\upharpoonright s}_{e}(x)[s] \downarrow \land\, \Phi^{A\upharpoonright (s-1)}_{e}(x)[s-1] \uparrow) \lor (\Phi^{A\upharpoonright s}_{e}(s)[s] \downarrow).
\end{align*}

Fix a $\mathbf{LTL}$-valuation $V:S\times \mathrm{Fml}_{\mathbf{LTL}}\rightarrow 2$ on $(S, R)$ extending $v$. We show that $\mathrm{INF}^{A}\leq_{T}V$. Let $\mathsf{GF} q \equiv \neg (\top \mathsf{U} \neg (\top \mathsf{U} q))$. We show the following: for every $e\in\omega$,
\begin{align*}
V(0,\mathsf{GF} p_{e})=1\Leftrightarrow e\in\mathrm{INF}^{A}.
\end{align*}
$(\Rightarrow)$: We assume that $e\not\in\mathrm{INF}^{A}$. Then, there exists $s\geq0$ such that for all $x>s$, $\Phi^{A}_{e}(x)\uparrow$. Fix such an $s$. Let $k_{0}, \dots, k_{m}$ be the list of all numbers $k < s$ such that $\Phi^{A \upharpoonright t}_{e}(k)[t] \downarrow$ for some stage $t$. Take $l$ large enough such that for each $j \le m$, $\Phi^{A \upharpoonright t}_{e}(k_{j})[t] \downarrow$ at some stage $t < l$. Fix any $h>l+s+1$. 
\vspace{1mm}

We show that $v(h, p_{e})=0$. By assumption, $\Phi^{A}_{e}(h)\uparrow$, that is, $\Phi^{A\upharpoonright h}_{e}(h)[h]\uparrow$. Fix $x<h$. If $s\leq x$, then $\Phi^{A\upharpoonright h}_{e}(x)[h]\uparrow$ by assumption. Assume that $x<s$. If $x=k_{j}$ for some $j$, then $\Phi^{A \upharpoonright (h-1)}_{e}(k_{j})[h-1] \downarrow$. If $x\neq k_{j}$ for any $j$, then $\Phi^{A\upharpoonright h}_{e}(x)[h]\uparrow$. Thus, for all $x<h$, $\Phi^{A\upharpoonright h}_{e}(x)[h] \uparrow \lor\, \Phi^{A\upharpoonright (h-1)}_{e}(x)[h-1] \downarrow$ and $\Phi^{A\upharpoonright h}_{e}(h)[h]\uparrow$, that is, $v(h, p_{e})=0$. Then $V(0,\mathsf{GF} p_{e})=0$.
\vspace{2mm}

$(\Leftarrow)$: We assume that $e\in\mathrm{INF}^{A}$. Fix $s\geq0$. By assumption, there exists $x>s$ such that $\Phi^{A}_{e}(x)\downarrow$. Fix such an $x$. Assume that $\Phi^{A\upharpoonright x}_{e}(x)[x]\uparrow$. Then there exists $t>x$ such that $\Phi^{A\upharpoonright t}_{e}(x)[t] \downarrow \land\, \Phi^{A\upharpoonright (t-1)}_{e}(x)[t-1] \uparrow$. Thus, $V(0,\mathsf{GF} p_{e})=1$.
\vspace{1mm}

Thus, $V$ computes $\mathrm{INF}^{A}$.
\end{proof}

\begin{cor}[$\mathrm{RCA}_0$]
VEL for $\mathbf{LTL}$ implies $\mathrm{ACA}_0$.
\end{cor}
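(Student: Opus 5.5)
We reduce to Proposition~\ref{acaeqijf}: given an injective $f:\mathbb{N}\rightarrow\mathbb{N}$, we build an $\mathbf{LTL}$-frame and an assignment, apply VEL for $\mathbf{LTL}$, and read the range of $f$ off the resulting valuation by $\Delta^0_1$-comprehension. The $\mathbf{LTL}$-frame is the same one used in Proposition~\ref{ltla2comp}: $S=\mathbb{N}$ with $s_i=i$ and $R={<}$. This is an infinite path, and it exists in $\mathrm{RCA}_0$.

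The assignment encodes each value of $f$ at the time it appears. Let $\{p_n\}_{n\in\mathbb{N}}$ enumerate $\mathrm{Prop}$. Define $v:S\times\mathrm{Prop}\rightarrow2$ by
\begin{align*}
v(s, p_{n})=1\Leftrightarrow f(s)=n .
\end{align*}
This is $\Delta^0_1$ in $f$, so $v$ exists in $\mathrm{RCA}_0$.

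By VEL for $\mathbf{LTL}$ there is an $\mathbf{LTL}$-valuation $V$ extending $v$. We consider the formula $\top\mathsf{U}p_n$, where $\top\equiv\bot\rightarrow\bot$ and $V(s,\top)=1$ for every $s$ by the implication clause. By the $\mathsf{U}$-clause of an $\mathbf{LTL}$-model at $s_0=0$,
\begin{align*}
V(0,\top\mathsf{U}p_{n})=1\Leftrightarrow\exists m\geq 0\bigl(\forall j<m\,V(j,\top)=1\land V(m,p_{n})=1\bigr)\Leftrightarrow\exists m\,(f(m)=n).
\end{align*}
Set $I=\{n\mid V(0,\top\mathsf{U}p_n)=1\}$. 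This set is $\Delta^0_1$ in $V$, so it exists, and it is the range of $f$. Proposition~\ref{acaeqijf} then gives $\mathrm{ACA}_0$.

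The verification is short, and there is no real obstacle beyond it. We must check that $\top\mathsf{U}p_n$ is a legitimate formula, which is clear since the map $n\mapsto\top\mathsf{U}p_n$ is primitive recursive. We must also check that the defining clauses of an $\mathbf{LTL}$-model apply literally, so that no induction is needed. Injectivity of $f$ plays no role here; any function would do.
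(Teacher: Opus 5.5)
Your proof is correct, but it takes a different and more elementary route than the paper.

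The paper gets this corollary from Proposition~\ref{ltla2comp}. It uses the same frame $(\omega,<)$, but with an assignment that records, stage by stage, new convergences of $\Phi^{A}_{e}$. The nested formula $\mathsf{GF}p_e\equiv\neg(\top\mathsf{U}\neg(\top\mathsf{U}p_e))$ then decides the $\Pi^0_2(A)$-complete set $\mathrm{INF}^{A}$, so every extending valuation computes $A''$.

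You instead use a single ``eventually'' $\top\mathsf{U}p_n$ with the assignment $v(s,p_n)=1\Leftrightarrow f(s)=n$. This is the $\mathbf{LTL}$ analogue of Proposition~\ref{velraca}: it extracts only a $\Sigma^0_1$ set, the range of $f$, and then appeals to Proposition~\ref{acaeqijf}. It is essentially the ``preceding argument'' the paper alludes to just before Proposition~\ref{ltla2comp}, made explicit.

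For the corollary, one jump is all that is needed. Your argument has the advantage of being visibly an $\mathrm{RCA}_0$ proof: every step is a literal instance of a defining clause of an $\mathbf{LTL}$-model, plus $\Delta^0_1$-comprehension. The paper's one-line citation, by contrast, leaves implicit the formalization of a statement about standard $\omega$. That formalization involves Turing functionals and stage approximations. It also uses the finite list of convergent inputs below $s$, which in $\mathrm{RCA}_0$ needs bounded $\Sigma^0_1$-comprehension and $\Sigma^0_1$-collection.

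What the paper's route buys is sharper computability information. The valuation built in Theorem~\ref{velltlacapf} is $(F\oplus v)''$-computable, and Proposition~\ref{ltla2comp} shows this bound cannot be improved on computable instances. Your construction only forces $\emptyset'$. You are also right that injectivity of $f$ plays no role.
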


\begin{proof}
By Proposition~\ref{ltla2comp}.
\end{proof}

Next, we consider in which axiom system VEL for $\mathbf{LTL}$ can be proved. The preceding discussion shows that this statement is provable in $\mathrm{ACA}^{+}_0$. For restricted valuations, the following holds.

\begin{prop}[The Restricted Valuation Extension Lemma for $\mathbf{LTL}$, $\mathrm{ACA}_0^{\prime}$]\label{rvelltlacap}
For each $\mathbf{LTL}$-frame $F=(S, R)$, $k\in\mathbb{N}$, and $v:S\times\mathrm{Prop}\rightarrow2$, there exists a $k$-restricted $\mathbf{LTL}$-valuation $V$ on $(S, R)$ such that $V$ is an extension of $v$.
\end{prop}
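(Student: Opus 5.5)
We follow the bottom-up construction of Theorem~\ref{pextend}, replacing the $\omega$-jump by a finite jump. The point is that each new temporal layer costs only a bounded number of jumps. Fix an $\mathbf{LTL}$-frame $F=(S,R)$ with $S=\{s_i\}_{i\in\mathbb{N}}$, an assignment $v$, and $k\in\mathbb{N}$. Let $\mathrm{Fml}_{\mathbf{LTL}}[j]$ be the formulas whose nesting depth of $\mathsf{X}$ and $\mathsf{U}$ is at most $j$. By $\mathrm{ACA}^{\prime}_0$, the set $Y=(F\oplus v)^{(k)}$ exists; we may take $Y$ to carry the whole sequence of iterated jumps $(F\oplus v)^{(j)}$, $j\le k$. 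We will build $V_0,\dots,V_k$ and record, for each $j\le k$, an index $e_j$ with $V_j$ computable from $(F\oplus v)^{(j)}$, and hence from $Y$.

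\textbf{Step $0$.} $V_0$ is computable from $F\oplus v$ by recursion on the Boolean structure, exactly as in Theorem~\ref{pextend}.

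\textbf{Step $j\le k$.} Suppose $V_{j-1}$ is given by an index relative to $(F\oplus v)^{(j-1)}$. We set
\begin{align*}
V_j(s_i,\mathsf{X}\varphi)&=V_{j-1}(s_{i+1},\varphi),\\
V_j(s_i,\varphi\mathsf{U}\psi)=1&\Leftrightarrow\exists n\geq i\bigl(\forall m(i\le m<n\rightarrow V_{j-1}(s_m,\varphi)=1)\land V_{j-1}(s_n,\psi)=1\bigr).
\end{align*}
The $\mathsf{X}$ clause is computable in $V_{j-1}$. The $\mathsf{U}$ clause is $\Sigma^0_1$ in $V_{j-1}$, since the inner universal quantifier is bounded. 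So $V_j$ (closed under $\rightarrow$) is computable from $(F\oplus v)^{(j)}$, and an index $e_j$ is found uniformly from $e_{j-1}$. This is the key advantage over the modal and $\mathbf{CTL}$ cases: the frame is a fixed linear path indexed by $\mathbb{N}$, so each layer costs exactly one jump and no quantification over successors or paths beyond a single unbounded $\exists$ appears.

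\textbf{Formalization in $\mathrm{ACA}^{\prime}_0$.} The uniformity gives a primitive recursive function $j\mapsto e_j$, so the sequence $\langle e_j\mid j\le k\rangle$ exists in $\mathrm{RCA}_0$. The main obstacle is then verifying, for nonstandard $k$, that each $e_j$ really codes a total $\{0,1\}$-valued function on $S\times\mathrm{Fml}_{\mathbf{LTL}}[j]$ satisfying the clauses. We argue as follows.

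1. Using $Y$ as a parameter, the statement ``$\Phi_{e_j}^{(F\oplus v)^{(j)}}$ is total, extends $v$, and satisfies the $\mathsf{X}$, $\mathsf{U}$ and $\rightarrow$ clauses for $V_{j-1}$'' is arithmetical in $Y$.
2. In fact it has fixed complexity, about $\Pi^0_2$ in $Y$, since the jump hierarchy is read off from $Y$.
3. Hence it can be proved by induction on $j\le k$, which is available in $\mathrm{ACA}^{\prime}_0$ for arithmetical formulas with set parameters.

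With that done, arithmetical comprehension in $Y$ yields $V=\{\langle w,\varphi,b\rangle\mid \Phi^{(F\oplus v)^{(\mathrm{deg}\varphi)}}_{e_{\mathrm{deg}\varphi}}(w,\varphi)=b\}$. This $V$ is the required $k$-restricted $\mathbf{LTL}$-valuation extending $v$.
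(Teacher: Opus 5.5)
Your proposal is correct and is essentially the paper's argument. The paper just says the result follows as in Corollary~\ref{pextend2}, i.e.\ by running the bottom-up construction of Theorem~\ref{pextend} with $(F\oplus v)^{(k)}$ as oracle at one jump per layer, and your indices-plus-arithmetical-induction formalization faithfully elaborates that. Two small corrections: add to your inductive invariant that $V_j$ agrees with $V_{j-1}$ on $S\times\mathrm{Fml}_{\mathbf{LTL}}[j-1]$ (your final $V$ reads each $\varphi$ off level $\mathrm{deg}\,\varphi$, while the $\mathsf{U}$ and $\rightarrow$ clauses combine subformulas of different degrees), and note that ``one jump per layer'' is no advantage over the modal case, where the $\Box$ clause likewise costs a single jump.
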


\begin{proof}
This follows in the same way as Theorem~\ref{pextend2}.
\end{proof}

We further show that VEL for $\mathbf{LTL}$ holds in $\mathrm{ACA}^{\prime}_0$. For this purpose, we use techniques from $\mathbf{LTL}$ model checking. Specifically, we demonstrate this through the following five steps:
\begin{enumerate}
    \item Construct a generalized nondeterministic B\"uchi automaton ($\mathrm{GNBA}$) for the $\mathbf{LTL}$-formula $\varphi$, which accepts a sequence of states if and only if $\varphi$ holds in a sequence of states.
    \item Convert this $\mathrm{GNBA}$ into a nondeterministic B\"uchi automaton ($\mathrm{NBA}$).
    \item Convert this $\mathrm{NBA}$ into a deterministic Rabin automaton ($\mathrm{DRA}$).
    \item The run of the $\mathrm{DRA}$ for any input sequence is unique. Moreover, its acceptance condition can be decided by $(F\oplus v)^{\prime\prime}$. Thus, the truth value of $\varphi$ at that state can be determined using $(F\oplus v)^{\prime\prime}$, and we construct a function deciding the truth value of $\varphi$ using $(F\oplus v)^{\prime\prime}$.
    \item We use $\mathrm{ACA}^{\prime}_0$ to show that the function constructed in $(4)$, which decides the truth value of $\varphi$, is indeed a valuation within the system.
\end{enumerate}

We define the following automata used in the proof. First, we define the automata treated in the proof within $\mathrm{RCA}_0$. The definitions of these automata in $\mathrm{RCA}_0$ are based on~\cite{MR3961566, MR2493187}. Moreover, the proofs of the propositions concerning $(1)$ and $(2)$ below are obtained by formalizing the proofs in~\cite{MR2493187} within $\mathrm{RCA}_0$. 

\begin{dfn}[$\mathrm{RCA}_0$~\cite{MR2493187, MR3961566}]
Let $\Sigma$ be a finite, nonempty set called an \textit{alphabet}. An \textit{infinite word} over $\Sigma$ is a function $\alpha : \mathbb{N}\rightarrow\Sigma$. We write $\alpha\in\Sigma^{\mathbb{N}}$ to mean that $\alpha$ is an infinite word over $\Sigma$.
\end{dfn}

\begin{dfn}[The nondeterministic B\"uchi automaton ($\mathrm{NBA}$), $\mathrm{RCA}_0$~\cite{MR2493187, MR3961566}]
A \textit{nondeterministic B\"uchi automaton} is a tuple $\mathcal{A}=\langle{Q, \Sigma, Q_{0}, \delta, F\rangle}$ where $Q$ is a finite set of \textit{states}, $\Sigma$ is an alphabet, $Q_{0}\subseteq Q$ is a set of \textit{initial states}, $\delta : Q\times\Sigma\times Q\rightarrow 2$ is a \textit{transition relation}, and $F\subseteq Q$ is a set of \textit{accepting states}.

Given an infinite word $\alpha\in\Sigma^{\mathbb{N}}$, we say that $\rho\in Q^{\mathbb{N}}$ is a \textit{run} of $\mathcal{A}$ over $\alpha$ if $\rho(0)\in Q_{0}$ and for every $n\in\mathbb{N}$, $\delta(\rho(n), \alpha(n), \rho(n+1))=1$. A run $\rho$ is accepting if $\rho(n)\in F$ for infinitely many $n\in\mathbb{N}$. A $\mathrm{NBA}$ $\mathcal{A}$ accepts $\alpha$ if there exists an accepting run of $\mathcal{A}$ over $\alpha$ in the above sense. 
\end{dfn}

\begin{dfn}[The generalized nondeterministic B\"uchi automaton ($\mathrm{GNBA}$), $\mathrm{RCA}_0$~\cite{MR2493187, MR3961566}]
A \textit{generalized nondeterministic B\"uchi automaton} is a tuple $\mathcal{A}=\langle{Q, \Sigma, Q_{0}, \delta, \mathcal{F}\rangle}$ where $Q, \Sigma, Q_{0}, \delta$ are defined as for an $\mathrm{NBA}$, and $\mathcal{F}$ is a subset of $2^{Q}$. 

Runs in a $\mathrm{GNBA}$ are defined as for a $\mathrm{NBA}$. A run $\rho$ is accepting if for each $F\in\mathcal{F}$, there exist infinitely many $n\in\mathbb{N}$ such that $\rho(n)\in F$. A $\mathrm{GNBA}$ $\mathcal{A}$ accepts $\alpha$ if there exists an accepting run of $\mathcal{A}$ over $\alpha$ in the above sense. 
\end{dfn}

\begin{dfn}[The deterministic Rabin automaton ($\mathrm{DRA}$), $\mathrm{RCA}_0$~\cite{MR2493187, MR3961566}]
A \textit{deterministic Rabin automaton} is a tuple $\mathcal{A}=\langle{Q, \Sigma, q_{0}, \delta, (E_{i}, F_{i})_{i\leq k}\rangle}$ where $Q$ is a finite set of states, $\Sigma$ is an alphabet, $q_{0}\in Q$ is a starting state, $\delta : Q\times\Sigma\rightarrow Q$ is a (partial) transition function, and $E_{i}, F_{i}\subseteq Q$ for each $i\leq k$.

Runs in a $\mathrm{DRA}$ are defined as for an $\mathrm{NBA}$. A run $\rho$ is accepting if for some $i$, each state in $E_{i}$ appears in $\rho$ only finitely many times and some state in $F_{i}$ appears in $\rho$ infinitely many times. A $\mathrm{DRA}$ $\mathcal{A}$ accepts $\alpha$ if there exists an accepting run of $\mathcal{A}$ over $\alpha$ in the above sense. 
\end{dfn}

Next, we define the objects and notions that appear in the construction of the automaton corresponding to $\varphi\in\mathrm{Fml}_{\mathbf{LTL}}$.

\begin{dfn}[$\mathrm{RCA}_0$~\cite{MR2493187}]
Let $\varphi\in\mathrm{Fml}_{\mathbf{LTL}}$. Let $\mathrm{Sub}(\varphi)$ be the set of all subformulas of $\varphi$. We define $\sim\, : \mathrm{Fml}_{\mathbf{LTL}}\rightarrow\mathrm{Fml}_{\mathbf{LTL}}$ as follows: for all $\psi\in\mathrm{Fml}_{\mathbf{LTL}}$,
\begin{align*}
\sim\psi = 
\begin{cases}
\rho & \text{if } \exists\rho \, (\psi \equiv \neg\rho), \\
\neg\psi & \text{otherwise.}
\end{cases}
\end{align*}
Then, we let $\overline{\mathrm{Sub}}(\varphi)=\mathrm{Sub}(\varphi)\cup\{\sim\psi\mid\psi\in\mathrm{Sub}(\varphi)\}$.
\end{dfn}

\begin{dfn}[$\mathrm{RCA}_0$~\cite{MR2493187}]
Let $\varphi\in\mathrm{Fml}_{\mathbf{LTL}}$ and $B\subseteq\overline{\mathrm{Sub}}(\varphi)$.
\begin{itemize}
    \item $B$ is \textit{consistent} with respect to propositional logic if for all $\varphi_{0}\land\varphi_{1}, \psi\in\overline{\mathrm{Sub}}(\varphi)$,
    \begin{itemize}
        \item $\varphi_{0}\land\varphi_{1}\in B\Leftrightarrow\varphi_{0}\in B$ and $\varphi_{1}\in B$,
        \item $\psi\in B\Rightarrow\neg\psi\not\in B$,
        \item $\top\in\overline{\mathrm{Sub}}(\varphi)\Rightarrow\top\in B$.
    \end{itemize}
    \item $B$ is \textit{locally consistent} with respect to the until operator if for all $\varphi_{0}\mathsf{U}\varphi_{1}\in\overline{\mathrm{Sub}}(\varphi)$,
    \begin{itemize}
        \item $\varphi_{1}\in B\Rightarrow\varphi_{0}\mathsf{U}\varphi_{1}\in B$,
        \item $\varphi_{0}\mathsf{U}\varphi_{1}\in B$ and $\varphi_{1}\not\in B\Rightarrow\varphi_{0}\in B$.
    \end{itemize}
    \item $B$ is \textit{maximal} if for all $\psi\in\overline{\mathrm{Sub}}(\varphi)$, $\psi\not\in B\Rightarrow\neg\psi\in B$.
    \item $B$ is \textit{elementary} if it is consistent with respect to propositional logic, locally consistent with respect to the until operator, and maximal.
\end{itemize}
\end{dfn}

We now show that the construction in $(1)$ works.

\begin{prop}[$\mathrm{RCA}_0$~\cite{MR2493187}]\label{gnbatov}
Let $\varphi \in \mathrm{Fml}_{\mathbf{LTL}}$ with $\mathrm{deg}(\varphi)=k$, and let $\Sigma_{\varphi}$ be the set of all subsets of $\mathrm{Prop} \cap \overline{\mathrm{Sub}}(\varphi)$. 
Then, there exists a $\mathrm{GNBA}$ $\mathcal{G}_{\varphi}$ over the alphabet $\Sigma_{\varphi}$ satisfying the following property: 
For any $\mathbf{LTL}$-frame $F=(S, R)$, any $v : S \times \mathrm{Prop} \rightarrow 2$, and any $s_{0} \in S$, 
there exists an infinite word $\alpha_0 \in \Sigma_{\varphi}^{\mathbb{N}}$ such that for the $k$-restricted $\mathbf{LTL}$-valuation $V$ on $F$ extending $v$,
\begin{align*}
V(s_{0}, \varphi) = 1 \iff \mathcal{G}_{\varphi} \text{ accepts } \alpha_0.
\end{align*}
\end{prop}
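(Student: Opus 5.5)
We follow the standard tableau construction of Baier--Katoen and check that every step uses only finite objects and $\Sigma^0_1$-induction. Fix $\varphi$ with $\mathrm{deg}(\varphi)=k$. Since $\overline{\mathrm{Sub}}(\varphi)$ is a finite set of formulas, it is coded by a number. The elementary subsets $B\subseteq\overline{\mathrm{Sub}}(\varphi)$ form a finite set $Q$, which is decidable from the code of $\varphi$. We define $\mathcal{G}_{\varphi}=\langle Q,\Sigma_{\varphi},Q_{0},\delta,\mathcal{F}\rangle$ as follows:
\begin{itemize}
\item $Q_{0}=\{B\in Q\mid\varphi\in B\}$;
\item $\delta(B,A,B')=1$ iff $A=B\cap\mathrm{Prop}$, and for every $\mathsf{X}\psi\in\overline{\mathrm{Sub}}(\varphi)$ we have $\mathsf{X}\psi\in B\leftrightarrow\psi\in B'$, and for every $\psi_{0}\mathsf{U}\psi_{1}\in\overline{\mathrm{Sub}}(\varphi)$ we have $\psi_{0}\mathsf{U}\psi_{1}\in B\leftrightarrow(\psi_{1}\in B\lor(\psi_{0}\in B\land\psi_{0}\mathsf{U}\psi_{1}\in B'))$;
\item $\mathcal{F}=\{F_{\psi_{0}\mathsf{U}\psi_{1}}\}$, where $F_{\psi_{0}\mathsf{U}\psi_{1}}=\{B\mid\psi_{0}\mathsf{U}\psi_{1}\notin B\lor\psi_{1}\in B\}$.
\end{itemize}
All of this is primitive recursive in $\varphi$, so $\mathcal{G}_{\varphi}$ exists in $\mathrm{RCA}_0$.

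Now let $F=(S,R)$ be an $\mathbf{LTL}$-frame, $v$ an assignment, $s_{0}=s_{i_{0}}$ a state, and $V$ a $k$-restricted valuation extending $v$. We read the statement as conditional on such a $V$; this existence is what Proposition~\ref{rvelltlacap} supplies, and here we only use it as a hypothesis. Define the word $\alpha_{0}(n)=\{p\in\mathrm{Prop}\cap\overline{\mathrm{Sub}}(\varphi)\mid v(s_{i_{0}+n},p)=1\}$, which is $\Delta^0_1$ in $F\oplus v$. Also define the candidate run $\rho(n)=\{\psi\in\overline{\mathrm{Sub}}(\varphi)\mid V(s_{i_{0}+n},\psi)=1\}$, which is $\Delta^0_1$ in $V$. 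Because every formula in $\overline{\mathrm{Sub}}(\varphi)$ has degree at most $k$, this $\rho$ is well defined.

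For the direction ($\Rightarrow$), we check from the valuation clauses that each $\rho(n)$ is elementary. Consistency and maximality come from the $\to$ clause. Local consistency comes from the $\mathsf{U}$ clause (take $n=i$ or unfold one step). The transition conditions for $\mathsf{X}$ and the unfolding $\psi_{0}\mathsf{U}\psi_{1}\equiv\psi_{1}\lor(\psi_{0}\land\mathsf{X}(\psi_{0}\mathsf{U}\psi_{1}))$ also follow directly from the clauses, and $\rho(0)\in Q_{0}$ holds when $V(s_{0},\varphi)=1$. For acceptance, fix an until formula and a position $m$. Either $\psi_{0}\mathsf{U}\psi_{1}\notin\rho(m)$, or, by the semantic clause, there is an $n\ge m$ with $V(s_{i_{0}+n},\psi_{1})=1$, and then $\rho(n)\in F_{\psi_{0}\mathsf{U}\psi_{1}}$. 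This is a $\Sigma^0_1$ fact about $V$ and needs no induction.

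For the direction ($\Leftarrow$), which we expect to be the main obstacle, let $\rho$ be an arbitrary accepting run over $\alpha_{0}$. We prove the claim
\[
\forall n\,\forall\psi\in\overline{\mathrm{Sub}}(\varphi)\,\bigl(\psi\in\rho(n)\leftrightarrow V(s_{i_{0}+n},\psi)=1\bigr)
\]
by induction on the complexity of $\psi$. Since $\overline{\mathrm{Sub}}(\varphi)$ is finite, this is an induction over a finite list, and the statement is $\Delta^0_1$ in $\rho\oplus V$, so $\Sigma^0_1$-induction suffices. The atomic case holds by the transition constraint $A=B\cap\mathrm{Prop}$, the Boolean cases by elementarity, and the $\mathsf{X}$ case by $\delta$.

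The delicate case is $\psi_{0}\mathsf{U}\psi_{1}$. Suppose first $\psi_{0}\mathsf{U}\psi_{1}\in\rho(n)$ but $\psi_{1}$ never holds from $n$ onward. Then $\Sigma^0_1$-induction on $m\ge n$, using $\delta$, shows that $\psi_{0}\mathsf{U}\psi_{1}\in\rho(m)$ and $\psi_{1}\notin\rho(m)$ for all $m\ge n$. So $\rho(m)\notin F_{\psi_{0}\mathsf{U}\psi_{1}}$ for all $m\ge n$, contradicting acceptance. Hence we may take the least $m\ge n$ with $\psi_{1}\in\rho(m)$; this least element exists by $\Sigma^0_1$ least-number principle, since membership is $\Delta^0_1$. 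By the transition condition $\psi_{0}\in\rho(j)$ for every $j$ with $n\le j<m$, and the induction hypothesis transfers this to $V$. The converse direction is a bounded induction backwards from the witness $m$ given by $V$.

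Taking $n=0$ and $\psi=\varphi$ gives $\varphi\in\rho(0)$, so $V(s_{0},\varphi)=1$. This completes the equivalence.
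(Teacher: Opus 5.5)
Your proposal is correct and follows essentially the same route as the paper. It uses the same Baier--Katoen tableau GNBA over elementary sets, the run $B_i=\{\psi\in\overline{\mathrm{Sub}}(\varphi)\mid V(s_i,\psi)=1\}$ for $(\Rightarrow)$, and for $(\Leftarrow)$ an induction over subformulas whose until case combines acceptance with the forward and backward bounded inductions that the paper isolates as Lemmas~\ref{ltllemma1} and~\ref{ltllemma2}. Your explicit checks that each $\rho(n)$ is elementary and that $\rho(0)\in Q_0$ supply details the paper leaves implicit. One small correction: the induction statement $\forall n\,(\psi\in\rho(n)\leftrightarrow V(s_{i_0+n},\psi)=1)$ is $\Pi^0_1$ rather than $\Delta^0_1$, which is harmless since $\mathrm{RCA}_0$ proves $\Pi^0_1$-induction.
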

\begin{proof}
Let $\varphi\in\mathrm{Fml}_{\mathbf{LTL}}$ with $\mathrm{deg}(\varphi)=k$, and let $\Sigma_{\varphi}$ be the set of all subsets of $\mathrm{Prop}\cap\overline{\mathrm{Sub}}(\varphi)$. Then we define $\mathcal{G}_{\varphi}=\langle{Q, \Sigma_{\varphi}, Q_{0}, \delta, \mathcal{F}\rangle}$ as follows:
\begin{itemize}
    \item $Q$ is the set of all elementary sets of formulas $B\subseteq\overline{\mathrm{Sub}}(\varphi)$,
    \item $Q_{0}=\{B\in Q\mid\varphi\in B\}$,
    \item $\mathcal{F}=\{F_{\varphi_{0}\mathsf{U}\varphi_{1}}\mid\varphi_{0}\mathsf{U}\varphi_{1}\in\overline{\mathrm{Sub}}(\varphi)\}$ where $F_{\varphi_{0}\mathsf{U}\varphi_{1}}=\{B\in Q\mid\varphi_{0}\mathsf{U}\varphi_{1}\not\in B\text{ or }\varphi_{1}\in B\}$.
\end{itemize}

We define the transition relation $\delta\, : Q\times\Sigma_{\varphi}\times Q\rightarrow 2$ as follows: for all $B, B^{\prime}\in Q$ and $A\in\Sigma_{\varphi}$,
\begin{align*}
\delta(B, A, B^{\prime})=1\Leftrightarrow&\,A=\mathrm{Prop}\cap B\\&\land
\forall\,\mathsf{X}\psi\in\overline{\mathrm{Sub}}(\varphi)(\mathsf{X}\psi\in B\leftrightarrow\psi\in B^{\prime})
\\&\land\forall\,\varphi_{0}\mathsf{U}\varphi_{1}\in\overline{\mathrm{Sub}}(\varphi)\,\Bigl(\varphi_{0}\mathsf{U}\varphi_{1}\in B\leftrightarrow(\varphi_{1}\in B\lor(\varphi_{0}\in B\land\varphi_{0}\mathsf{U}\varphi_{1}\in B^{\prime}))\Bigr).
\end{align*}

Fix a $\mathbf{LTL}$-frame $F=(S, R)$, $v : S\times\mathrm{Prop}\rightarrow 2$, and $s_{0}\in S$. Let $s : \mathbb{N}\rightarrow S$ be the unique path starting from $s_{0}$ by $R$. For each $i$, we let $A_{i}=\{p\in\mathrm{Prop}\cap\overline{\mathrm{Sub}}(\varphi)\mid v(s_{i}, p)=1\}$. Then we have $\alpha_0=A_{0}A_{1}\dots$. Next, we show that $V(s_{0}, \varphi)=1\Leftrightarrow \mathcal{G}_{\varphi}\text{ accepts $\alpha_0$}$. Fix a $k$-restricted $\mathbf{LTL}$-valuation $V$ on $F$ extending $v$.
\vspace{3mm}

$(\Rightarrow)$: Assume that $V(s_{0}, \varphi)=1$. For each $i$, we let $B_{i}=\{\psi\in\overline{\mathrm{Sub}}(\varphi)\mid V(s_{i}, \psi)=1\}$. We show that $\beta=B_{0}B_{1}\dots$ is an accepting run for $\alpha_0$. 
\vspace{2mm}

First, we show that $\beta$ is a run of $\mathcal{G}_{\varphi}$. Clearly, for each $i$, $A_{i}=\mathrm{Prop}\cap B_{i}$. For all $\mathsf{X}\psi\in\overline{\mathrm{Sub}}(\varphi)$ and $i$, 
\begin{align*}
\mathsf{X}\psi\in B_{i}&\Leftrightarrow V(s_{i}, \mathsf{X}\psi)=1\\
&\Leftrightarrow V(s_{i+1}, \psi)=1\\
&\Leftrightarrow \psi\in B_{i+1}.
\end{align*}
Similarly, one shows that for all $\varphi_{0}\mathsf{U}\varphi_{1}\in\overline{\mathrm{Sub}}(\varphi)$ and $i$, $\varphi_{0}\mathsf{U}\varphi_{1}\in B_{i}$ if and only if $\varphi_{1}\in B_{i}$ or ($\varphi_{0}\in B_i$ and $\varphi_{0}\mathsf{U}\varphi_{1}\in B_{i+1}$). Thus, for all $i$, $\delta(B_{i}, A_{i}, B_{i+1})=1$. 
\vspace{1mm}

Next, we show that $\beta$ is accepting. To derive a contradiction, we assume that there exist $\varphi_{0, j}\mathsf{U}\varphi_{1, j}\in\overline{\mathrm{Sub}}(\varphi)$ and $m$ such that for all $l>m$, $B_{l}\not\in F_{\varphi_{0, j}\mathsf{U}\varphi_{1, j}}$. By assumption, $B_{m+1}\not\in F_{\varphi_{0, j}\mathsf{U}\varphi_{1, j}}$, that is, $\varphi_{0, j}\mathsf{U}\varphi_{1, j}\in B_{m+1}$ and $\varphi_{1}\not\in B_{m+1}$. Then $V(s_{m+1}, \varphi_{0, j}\mathsf{U}\varphi_{1, j})=1$ and $V(s_{m}, \varphi_{1, j})=0$. Thus, for some $k>m+1$, $V(s_{k}, \varphi_{1, j})=1$. By the definition of $B_{k}$, $\varphi_{1, j}\in B_{k}$, that is, $B_{k}\in F_{\varphi_{0, j}\mathsf{U}\varphi_{1, j}}$. It is a contradiction. Thus, $\beta$ is an accepting run for $\alpha_0$. 
\vspace{3mm}

$(\Leftarrow)$: Assume that $\mathcal{G}_{\varphi}$ accepts $\alpha_0$. Then there exists an accepting run $\beta=B_{0}B_{1}\dots$ for $\alpha_0$ in $\mathcal{G}_{\varphi}$. Then it follows that for all $i$, $A_{i}=\mathrm{Prop}\cap B_{i}$. 
\vspace{1mm}

For the above $\beta$, we show the following lemmas.
\begin{lem}[$\mathrm{RCA}_0$]\label{ltllemma1}
Let $\varphi_{0}\mathsf{U}\varphi_{1}\in\overline{\mathrm{Sub}}(\varphi)$ and fix $m<j$. Suppose that for all $m\leq i<j$, $\varphi_{0}\in B_{i}$ and $\varphi_{1}\in B_{j}$. Then $\varphi_{0}\mathsf{U}\varphi_{1}\in B_{m}$. 
\end{lem}
\begin{lemproof}
Let $\varphi_{0}\mathsf{U}\varphi_{1}\in\overline{\mathrm{Sub}}(\varphi)$ and fix $m<j$. Suppose that for all $m\leq i<j$, $\varphi_{0}\in B_{i}$ and $\varphi_{1}\in B_{j}$. Let $l=j-m$. Then we show the following statement by $\Sigma^0_0$-induction on $k\leq l$:
\begin{align*}
\varphi_{0}\mathsf{U}\varphi_{1}\in B_{j-k}. 
\end{align*}

\textit{Base case}: By assumption, $\varphi_{1}\in B_{j}$. Since $B_{j}$ is elementary, $\varphi_{0}\mathsf{U}\varphi_{1}\in B_{j}$.
\vspace{2mm}

\textit{Induction step}: Let $k<l$. Assume that $\varphi_{0}\mathsf{U}\varphi_{1}\in B_{j-k}$. By the assumption, $\varphi_{0}\in B_{j-(k+1)}$. Then, by definition of the transition relation, $\varphi_{0}\mathsf{U}\varphi_{1}\in B_{j-(k+1)}$. 
\vspace{2mm}

By induction, we complete the proof; that is, $\varphi_{0}\mathsf{U}\varphi_{1}\in B_{m}$. 
\end{lemproof}

\begin{lem}[$\mathrm{RCA}_0$]\label{ltllemma2}
Let $\varphi_{0}\mathsf{U}\varphi_{1}\in\overline{\mathrm{Sub}}(\varphi)$ and fix $m<j$. Suppose that for all $m\leq i\leq j$, $\varphi_{1}\not\in B_{i}$ and $\varphi_{0}\mathsf{U}\varphi_{1}\in B_{m}$. Then for all $m\leq i\leq j$, $\varphi_{0}\in B_{i}$ and $\varphi_{0}\mathsf{U}\varphi_{1}\in B_{i}$. 
\end{lem}
\begin{lemproof}
Let $\varphi_{0}\mathsf{U}\varphi_{1}\in\overline{\mathrm{Sub}}(\varphi)$ and fix $m<j$. Suppose that for all $m\leq i\leq j$, $\varphi_{1}\not\in B_{i}$ and $\varphi_{0}\mathsf{U}\varphi_{1}\in B_{m}$. Let $l=j-m$. Then we show the following statement by $\Sigma^0_0$-induction on $k\leq l$:
\begin{align*}
\varphi_{0}\in B_{m+k}\text{ and }\varphi_{0}\mathsf{U}\varphi_{1}\in B_{m+k}. 
\end{align*}

\textit{Base case}: By assumption, $\varphi_{1}\not\in B_{m}$ and $\varphi_{0}\mathsf{U}\varphi_{1}\in B_{m}$. Since $B_{m}$ is elementary, $\varphi_{0}\in B_{m}$.
\vspace{2mm}

\textit{Induction step}: Let $k<l$. Assume that $\varphi_{0}\in B_{m+k}\text{ and }\varphi_{0}\mathsf{U}\varphi_{1}\in B_{m+k}$. By assumption, $\varphi_{1}\not\in B_{m+k}$ and $\varphi_{1}\not\in B_{m+(k+1)}$. Then, by the definition of the transition relation, $\varphi_{0}\mathsf{U}\varphi_{1}\in B_{m+(k+1)}$. Since $B_{m+(k+1)}$ is elementary, $\varphi_{0}\in B_{m+(k+1)}$.
\vspace{2mm}

By induction, we complete the proof. 
\end{lemproof}

Next, we show that the following statement by $\Sigma^0_1$-induction on the complexity of the formula $\psi\in\overline{\mathrm{Sub}}(\varphi)$:
\begin{align*}
\forall i(\psi\in B_{i}\leftrightarrow V(s_{i}, \psi)=1).
\end{align*}

\textit{Base case}: If $\psi\equiv p\in\mathrm{Prop}$, then $\psi\in B_{0}\Leftrightarrow V(s_{0}, \psi)=1$ by definition.
\vspace{2mm}

\textit{Induction step}: It is enough to show that it holds for the case of $\psi\equiv\varphi_{0}\mathsf{U}\varphi_{1}$. Fix $\varphi_{0}, \varphi_{1}\in\overline{\mathrm{Sub}}(\varphi)$. Assume that for each $k<2$, $\forall j(\varphi_{k}\in B_{j}\leftrightarrow V(s_{j}, \varphi_{k})=1)$ holds. Fix $m$.
\begin{itemize}
    \item Assume that $V(s_{m}, \varphi_{0}\mathsf{U}\varphi_{1})=1$. Then, there exists $j\geq m$ such that for all $m\leq i< j$, $V(s_{i}, \varphi_{0})=1$ and $V(s_{j}, \varphi_{1})=1$. By assumption, for all $m\leq i< j$, $\varphi_{0}\in B_{i}$ and $\varphi_{1}\in B_{j}$. Then $\varphi_{0}\mathsf{U}\varphi_{1}\in B_{j}$. By Lemma~\ref{ltllemma1}, $\varphi_{0}\mathsf{U}\varphi_{1}\in B_{m}$.
    \item Assume that $\varphi_{0}\mathsf{U}\varphi_{1}\in B_{m}$. Since $B_{m}$ is elementary, $\varphi_{0}\in B_{m}$ or $\varphi_{1}\in B_{m}$. If $\varphi_{1}\in B_{m}$, then, by assumption, $V(s_{m}, \varphi_{1})=1$. Thus, $V(s_{m},\varphi_{0}\mathsf{U}\varphi_{1})=1$. We assume that $\varphi_{0}\in B_{m}$. To derive a contradiction, assume that for all $j\geq m$, $\varphi_{1}\not\in B_{j}$. Then, by Lemma~\ref{ltllemma2}, for all $j\geq m$, $\varphi_{0}\in B_{j}$ and $\varphi_{0}\mathsf{U}\varphi_{1}\in B_{j}$. Thus, for infinitely many $j\geq m$, $B_{j}\in F_{\varphi_{0}\mathsf{U}\varphi_{1}}$. However, for all $j$, we have $\varphi_{1}\not\in B_{j}$ and $\varphi_{0}\mathsf{U}\varphi_{1}\in B_{j}$, that is, $B_{j}\not\in F_{\varphi_{0}\mathsf{U}\varphi_{1}}$. It is a contradiction. Thus, for some $j\geq m$, $\varphi_{1}\in B_{j}$. Let $j$ be the smallest index such that $\varphi_{1}\in B_{j}$. We may consider $j>m$. By Lemma~\ref{ltllemma2}, for all $m\leq i<j$, $\varphi_{0}\in B_{i}\text{ and }\varphi_{0}\mathsf{U}\varphi_{1}\in B_{i}$. By assumption, for all $m\leq i<j$, $V(s_{i}, \varphi_{0})=1$ and $V(s_{j}, \varphi_{1})=1$. Thus, we have $V(s_{m}, \varphi_{0}\mathsf{U}\varphi_{1})=1$.
\end{itemize}
By induction, we complete the proof. 
\end{proof}

Next, we show that the construction in $(2)$ works.

\begin{prop}[$\mathrm{RCA}_0$~\cite{MR2493187}]\label{gnbatonba}
For each $\mathrm{GNBA}$ $\mathcal{G}$ over an alphabet $\Sigma$, there exists a $\mathrm{NBA}$ $\mathcal{B}$ over the same alphabet such that for every infinite word $\alpha\in\Sigma^{\mathbb{N}}$,
\begin{align*}
\mathcal{G}\text{ accepts $\alpha$}\Leftrightarrow\mathcal{B}\text{ accepts $\alpha$}.
\end{align*}
\end{prop}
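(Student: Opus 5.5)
We use the standard counter construction from~\cite{MR2493187}, and check that every step goes through in $\mathrm{RCA}_0$. Let $\mathcal{G}=\langle Q, \Sigma, Q_{0}, \delta, \mathcal{F}\rangle$ with $\mathcal{F}=\{F_{0},\dots,F_{l-1}\}$ a finite list; if $l=0$, put $\mathcal{F}=\{Q\}$, which changes nothing. Define $\mathcal{B}=\langle Q\times l, \Sigma, Q_{0}\times\{0\}, \delta', F_{0}\times\{0\}\rangle$, where
\begin{align*}
\delta'(\langle q, i\rangle, a, \langle q', i'\rangle)=1 \Leftrightarrow\ & \delta(q, a, q')=1\\
&\land\bigl((q\in F_{i}\land i'=i+1 \bmod l)\lor(q\notin F_{i}\land i'=i)\bigr).
\end{align*}
This is a finite object defined primitive recursively from $\mathcal{G}$, so it exists in $\mathrm{RCA}_0$.

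\emph{From $\mathcal{G}$ to $\mathcal{B}$.} Let $\rho$ be an accepting run of $\mathcal{G}$ on $\alpha$. Define the counter $c(n)$ by primitive recursion: $c(0)=0$, and $c(n+1)=c(n)+1 \bmod l$ if $\rho(n)\in F_{c(n)}$, otherwise $c(n+1)=c(n)$. Then $\rho'(n)=\langle\rho(n), c(n)\rangle$ is $\Delta^0_1$ in $\rho$, so it exists, and it is a run of $\mathcal{B}$.

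To see that $\rho'$ is accepting, fix $N$. We claim the counter completes a full cycle after $N$. Whenever the counter equals $i$ at some stage $m$, it advances at the next $n\ge m$ with $\rho(n)\in F_{i}$, and such an $n$ exists because $F_{i}$ is visited infinitely often. Iterating this $l$ times, with $l$ standard-finite, shows the counter returns to $0$ at some later position whose state lies in $F_{0}$. This iteration is a bounded $\Sigma^0_1$ induction on $j\le l$ for the statement ``there is $n\ge N$ at which the counter has advanced $j$ times,'' which $\mathrm{RCA}_0$ allows. Hence $\rho'$ visits $F_{0}\times\{0\}$ infinitely often.

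\emph{From $\mathcal{B}$ to $\mathcal{G}$.} Let $\rho'$ be an accepting run of $\mathcal{B}$. Its first projection $\rho$ is a run of $\mathcal{G}$. Given $N$ and $i<l$, pick $n\ge N$ with $\rho'(n)\in F_{0}\times\{0\}$. From $n$ on, the counter leaves $0$. By the transition rule, the counter can return to $0$ only after passing through each value $i$ and leaving it, and it leaves $i$ only at a position where $\rho$ lies in $F_{i}$. Since $\rho'$ hits $F_{0}\times\{0\}$ again later, the counter does return, so between these two hits $\rho$ visits every $F_{i}$ at some position $\ge N$. Thus every $F_{i}$ is visited infinitely often and $\rho$ is accepting.

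The main obstacle is the case $l=1$ and, more generally, making sure the counter's return to $0$ really forces a visit to each $F_{i}$. I will handle this by stating the invariant explicitly: between consecutive positions where the counter is $0$ and the state lies in $F_{0}$, the counter takes all values $0,\dots,l-1$ in increasing order, and each increment from $i$ happens at a state in $F_{i}$. This invariant is a $\Sigma^0_0$ property of finite initial segments of $\rho'$, so it can be proved by bounded induction in $\mathrm{RCA}_0$. The only other care point is that all inductions are bounded by the standard $l$ or are $\Sigma^0_1$, so no more than $\mathrm{RCA}_0$ is needed.
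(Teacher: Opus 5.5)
Your proposal is correct and is essentially the paper's proof. You use the same counter construction on $Q\times\{0,\dots,k\}$ with accepting set $F_{0}\times\{0\}$ and a counter defined by recursion for the direction from $\mathcal{G}$ to $\mathcal{B}$; for the converse you use an induction along one cycle of the counter, which the paper phrases as a $\Sigma^0_1$-induction producing $s_0<s_1<\dots<s_n$ with $\rho'(s_j)\in F_j\times\{j\}$.

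One small correction: in a model of $\mathrm{RCA}_0$ the number $l$ of acceptance sets need not be standard, so drop the appeal to $l$ being ``standard-finite''. Your $\Sigma^0_1$-induction on $j\le l$ already works for arbitrary $l$.
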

\begin{proof}
Let $\mathcal{G}=\langle{Q, \Sigma, Q_{0}, \delta, \mathcal{F}\rangle}$ be a $\mathrm{GNBA}$. In fact, $\mathcal{G}=\langle{Q, \Sigma, Q_{0}, \delta, \mathcal{F}\rangle}$ is equivalent to a $\mathcal{G}^{\prime}=\langle{Q, \Sigma, Q_{0}, \delta, \mathcal{F}\cup\{Q\}\rangle}$. Thus, we may assume that $\mathcal{F}\neq\emptyset$. Let $\mathcal{F}=\{F_{0},\dots, F_{k}\}$ where $k\geq0$. Then we define $\mathcal{B}=\langle{Q^{\prime}, \Sigma, Q^{\prime}_{0}, \delta^{\prime}, F^{\prime}\rangle}$ as follows:
\begin{itemize}
    \item $Q^{\prime}=Q\times\{0,\dots,k\}$,
    \item $Q^{\prime}_{0}=Q_{0}\times\{0\}$,
    \item $F^{\prime}=F_{0}\times\{0\}$.
\end{itemize}

We define the transition relation $\delta^{\prime}\, : Q^{\prime}\times\Sigma\times Q^{\prime}\rightarrow 2$ as follows: for all $\langle{q, i\rangle}, \langle{q^{\prime}, i^{\prime}\rangle}\in Q^{\prime}$ and $A\in\Sigma$,
\begin{align*}
\delta^{\prime}(\langle{q, i\rangle}, A, \langle{q^{\prime}, i^{\prime}\rangle})=1\Leftrightarrow&\,\bigl(i^{\prime}=i\land\delta(q, A, q^{\prime})=1\land q\not\in F_{i}\bigr)\\
&\lor\bigl(i^{\prime}=i+1\leq k\land\delta(q, A, q^{\prime})=1\land q\in F_{i}\bigr)\\
&\lor\bigl(i=k\land i^{\prime}=0\land\delta(q, A, q^{\prime})=1\land q\in F_{k}\bigr).
\end{align*}

Fix an infinite word $\alpha\in\Sigma^{\mathbb{N}}$. We show that $\mathcal{G}\text{ accepts $\alpha$}\Leftrightarrow\mathcal{B}\text{ accepts $\alpha$}$. We may assume that $k=1$.
\vspace{2mm}

$(\Leftarrow)$: Assume that $\mathcal{B}$ accepts $\alpha$. Then there exists an accepting run $\beta=\langle{q_{0}, i_{0}\rangle}\langle{q_{1}, i_{1}\rangle}\dots$ for $\alpha$ in $\mathcal{B}$. We show that $\beta^{\prime}=q_{0}q_{1}\dots$ is an accepting run in $\mathcal{G}$. 
\vspace{1mm}

Since $\beta$ is an accepting run in $\mathcal{B}$, there exists an infinite increasing sequence $k_{0}, k_{1}, \dots$ such that $\langle{q_{k_{m}}, i_{k_{m}}\rangle}=\langle{q_{k_{m}}, 0\rangle}\in F_{0}\times\{0\}$ for all $m$. Fix any $m$ and let $s_{0}\coloneqq k_{m}$. We show the following statement by $\Sigma^0_1$-induction on $n\leq k$.
\begin{align*}
\exists s_1\dots\exists s_n(s_{0}< s_{1}<\dots< s_{n}\land\forall j\leq n(\langle{q_{s_{j}}, i_{s_{j}}\rangle}\in F_{j}\times\{j\})).
\end{align*}

\textit{Base case}: It is obvious.
\vspace{2mm}

\textit{Induction step}: Fix $n<k$, and we assume that $s_{1},\dots,s_{n}$ satisfy the above statement. Then $\langle{q_{(s_{n}+1)}, i_{(s_{n}+1)}\rangle}\in Q\times\{n+1\}$ by the transition relation. Let $s_{(n+1)}$ be the least number $l>s_{n}$ such that $i_{(l+1)}\neq n+1$. Then $i_{s_{(n+1)}}=n+1$ and $q_{s_{(n+1)}}\in F_{i_{s_{(n+1)}}}=F_{n+1}$ by the transition relation. Thus, $\langle{q_{s_{(n+1)}}, i_{s_{(n+1)}}\rangle}\in F_{n+1}\times\{n+1\}$.
\vspace{2mm}

By induction, for all $n \le k$, there exist $s_{0} < s_{1} < \dots < s_{n}$ such that $\langle q_{s_{j}}, i_{s_{j}} \rangle \in F_{j} \times \{j\}$ for all $j \le n$. Since $m$ was arbitrary, it follows that for each $j \le k$, $\langle q_{m}, i_{m} \rangle \in F_{j} \times \{j\}$ for infinitely many $m$, that is, $q_{m} \in F_{j}$ for infinitely many $m$. Thus, $\beta'$ is accepting in $\mathcal{G}$.
\vspace{2mm}

$(\Rightarrow)$: Assume that $\mathcal{G}$ accepts $\alpha$. Then there exists an accepting run $\beta=q_{0}q_{1}\dots$ for $\alpha$ in $\mathcal{G}$. Let $i_{0}=0$. We define the sequence $\{i_{n}\}_{n}$ as follows: assume that $i_{n}$ is defined. Then,
\begin{align*}
i_{(n+1)} \coloneqq 
\begin{cases}
i_{n} & \text{if } q_{n}\not\in F_{i_{n}}, \\
i_{n}+1 & \text{if } q_{n}\in F_{i_{n}}\land i_{n}<k, \\
0 & \text{otherwise.}
\end{cases}
\end{align*}

We show that $\beta^{\prime}=\langle{q_{0}, i_{0}\rangle}\langle{q_{1}, i_{1}\rangle}\dots$ is an accepting run in $\mathcal{B}$. Let $s_{0}$ be the least number $l\geq0$ such that $i_{(l+1)}\neq0$. Then $i_{s_{0}}=0$ and $q_{s_{0}}\in F_{i_{s_{0}}}=F_{0}$ by definition of $i_{n}$. Thus, $\langle{q_{s_{0}}, i_{s_{0}}\rangle}\in F_{0}\times\{0\}$. Similarly to the argument above, for each $n \le k$, there exist $s_{0} < s_{1} < \dots < s_{n}$ such that $\langle q_{s_{j}}, i_{s_{j}} \rangle \in F_{j} \times \{j\}$ for all $j \le n$. Furthermore, we may show that for any $m$ such that $\langle q_{m}, i_{m} \rangle \in F_{k} \times \{k\}$, there exists some $s > m$ such that $\langle q_{s}, i_{s} \rangle \in F_{0} \times \{0\}$. Thus, it follows that $\langle q_{m}, i_{m} \rangle \in F_{0} \times \{0\}$ for infinitely many $m$. Thus, $\beta^{\prime}$ is accepting in $\mathcal{B}$.
\end{proof}

The fact that the construction in $(3)$ works follows from the following proposition.

\begin{prop}[$\mathrm{RCA}_0+\Sigma^0_2$-induction~\cite{MR3961566}]\label{gnbatodra}
The existence of an algorithm which, given a $\mathrm{NBA}$ $\mathcal{B}$ over an alphabet $\Sigma$, outputs an equivalent $\mathrm{DRA}$ $\mathcal{A}$ over the same alphabet such that for every infinite word $\alpha\in\Sigma^{\mathbb{N}}$ we have
\begin{align*}
\mathcal{B}\text{ accepts $\alpha$}\Leftrightarrow\mathcal{A}\text{ accepts $\alpha$}.
\end{align*}
\end{prop}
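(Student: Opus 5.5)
This is McNaughton's theorem (NBA to DRA) formalized in $\mathrm{RCA}_0+\Sigma^0_2$-induction, following~\cite{MR3961566}. The algorithm is Safra's construction, which is purely finitary. Given $\mathcal{B}=\langle Q,\Sigma,Q_0,\delta,F\rangle$ with $|Q|=n$, the states of $\mathcal{A}$ are Safra trees. A Safra tree is an ordered tree with at most $n$ nodes, names drawn from $\{1,\dots,2n\}$, and each node labelled by a nonempty subset of $Q$. The labels satisfy two conditions: the children of a node have pairwise disjoint labels, and their union is a proper subset of the parent's label. Each node also carries a mark bit.

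The transition $\delta_{\mathcal{A}}(T,a)$ is computed in the standard stages:
\begin{enumerate}
\item unmark all nodes;
\item for each node whose label meets $F$, add a new youngest child labelled by its label intersected with $F$;
\item apply the powerset successor under $a$ to every label;
\item remove each state from every node that has an older sibling containing it;
\item delete nodes with empty labels;
\item for each node whose label equals the union of its children's labels, delete the descendants and mark the node.
\end{enumerate}
The Rabin pairs are indexed by names $i\le 2n$. $E_i$ is the set of trees in which $i$ does not occur, and $F_i$ is the set of trees in which $i$ occurs marked. The whole construction is a primitive recursive function of the code of $\mathcal{B}$, so it exists in $\mathrm{RCA}_0$. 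The real work is correctness for an arbitrary $\alpha$ in the model.

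\emph{Completeness} ($\mathcal{A}$ accepts $\Rightarrow$ $\mathcal{B}$ accepts). Suppose name $i$ is eventually always present, say from stage $m$ onward, and is marked at infinitely many stages $m<t_0<t_1<\cdots$. Each marking at $t_{j+1}$ witnesses that every state in the label of $i$ at $t_{j+1}$ is reachable from some state in the label at $t_j$ by a finite run segment that visits $F$. This yields a finitely branching tree of finite $\mathcal{B}$-run segments that pass through $F$ between consecutive markings. The tree is infinite because the labels stay nonempty.

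The obstacle is extracting an infinite path. König's lemma is not available in $\mathrm{RCA}_0$, but the tree is bounded by $|Q|$ at each level and is computable from $\alpha$. I would avoid König entirely by running backwards. For each $j$, the reachable states form a finite set. Since $Q$ is finite, one can fix a single state $q$ that occurs in the label of $i$ at infinitely many $t_j$ and is backward-reachable consistently. Making this choice requires the infinite pigeonhole principle for a fixed finite colouring, which is exactly $\Sigma^0_2$-induction (equivalently $\mathrm{B}\Sigma^0_2$ in this finite form). From that choice the run is built recursively in $\alpha$.

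\emph{Soundness} ($\mathcal{B}$ accepts $\Rightarrow$ $\mathcal{A}$ accepts). Take an accepting run $\rho$ and track the node of the Safra tree containing $\rho(t)$ at maximal depth. The depth is bounded by $n$. One shows that some node on the path of nodes containing $\rho(t)$ is eventually never deleted and is marked infinitely often. The argument picks the deepest level $d$ at which the node containing $\rho$ eventually stabilizes. Stabilization is a $\Sigma^0_2$ property of $d$ (``exists $m$, for all $t>m$''), so taking the maximal such $d\le n$ uses $\Sigma^0_2$-induction, or least-number principle, over a bounded range.

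Marking infinitely often at that level then follows. Otherwise $\rho$, which visits $F$ infinitely often, would repeatedly spawn a child at level $d+1$ containing $\rho$. Step 4 ensures such children can only migrate to older siblings finitely often, since there are at most $n$ siblings and the age order is well-founded. That child would therefore eventually stabilize, contradicting the maximality of $d$. This finiteness of migrations is again a bounded $\Sigma^0_2$ argument.

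The main difficulty I expect is exactly these two $\Sigma^0_2$ maximality and pigeonhole steps. My plan is to formulate each as the least-number principle for a $\Sigma^0_2$ (or $\Pi^0_2$) formula over $\{0,\dots,n\}$, which holds in $\mathrm{RCA}_0+\Sigma^0_2$-induction. Everything else is $\Sigma^0_0$ bookkeeping about finite trees.
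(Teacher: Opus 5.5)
Your direction ``$\mathcal{A}$ accepts $\Rightarrow$ $\mathcal{B}$ accepts'' has a genuine gap at the extraction of the infinite run. Suppose some state $q$ lies in the label of node $i$ at infinitely many marking times $t_j$. That tells you nothing about how these occurrences are linked: $q$ at $t_j$ need not reach $q$ at any later $t_{j'}$. If you instead grow the run forward from $q$, at each stage you must pick a successor that is forward-extendible. That is a $\Pi^0_1$ property, not decidable from $\alpha$, since a successor can die at an unpredictable later stage. So no single pigeonhole choice lets you ``build the run recursively in $\alpha$''. The phrase ``backward-reachable consistently'' hides exactly the compactness step you wanted to avoid. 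If you meant a subsequence along which $q$ returns to $q$ through $F$, its existence needs a Ramsey-type homogeneity argument, not pigeonhole. Your calibration is also off. The infinite pigeonhole principle with a number of colours from the model is equivalent to $\mathrm{B}\Sigma^0_2$, which is strictly weaker than $\Sigma^0_2$-induction. For a standard number of colours it is already provable in $\mathrm{RCA}_0$.

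Your instinct that finitely much non-effective data suffices is right, but the right datum is different; the standard tool is bounded-width K\"onig's lemma, which \cite{MR3961566} also derives from $\Sigma^0_2$-induction. Build a forest as follows.
\begin{enumerate}
\item Level $j$ consists of the states in the label of $i$ at $t_j$.
\item The parent of $q$ at level $j+1$ is the least state at level $j$ from which $q$ is reachable by an $F$-visiting segment of $\alpha$.
\item Each state at $t_0$ is reachable from $Q_0$ by a finite segment found by search.
\end{enumerate}
This forest is $\alpha$-computable, infinite, and of width at most $|Q|$.

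Each extendible node has an extendible child (by $\mathrm{B}\Sigma^0_1$), and distinct nodes have distinct children. Hence the number of extendible nodes per level is non-decreasing and bounded by $|Q|$. The statement ``some level has at least $c$ extendible nodes'' is $\Sigma^0_2$ in $c$. So $\Sigma^0_2$-induction gives the maximal such $c$ and a level $\ell_0$ where it is attained. For $\ell\geq\ell_0$, the extendible nodes at level $\ell$ are exactly the $c$ nodes that still have descendants at the first level $J$ where only $c$ of them do. This is $\Delta^0_1$ in $\alpha$ together with finite parameters, so a path exists by $\Delta^0_1$-comprehension. Gluing its segments gives an accepting run of $\mathcal{B}$.

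Alternatively, apply additive Ramsey (equivalent to $\Sigma^0_2$-induction by \cite{MR3961566}) to the colouring $(j,j')\mapsto$ the $F$-flagged reachability relation from $t_j$ to $t_{j'}$. This yields an idempotent colour and a state with an $F$-visiting loop along the homogeneous set.

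Your other direction, using the $\Sigma^0_2$ maximal-depth argument, is fine. One repair is needed in the construction: step~4 should remove the state from the whole subtree of the younger node, not just from the node itself, so that children's labels stay inside their parent's.
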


From the above, we obtain the following.

\begin{thm}[The Valuation Extension Lemma for $\mathbf{LTL}$, $\mathrm{ACA}_0^{\prime}$]\label{velltlacapf}
For each $\mathbf{LTL}$-frame $F=(S, R)$ and $v:S\times\mathrm{Prop}\rightarrow2$, there exists a $\mathbf{LTL}$-valuation $V$ on $(S, R)$ such that $V$ is an extension of $v$.
\end{thm}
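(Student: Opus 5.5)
We build $V$ directly from $(F\oplus v)''$ and use $\mathrm{ACA}^{\prime}_0$ only to verify the valuation clauses. We follow the five-step plan stated before the definitions of the automata.

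\textbf{Construction of $V$.} Fix an $\mathbf{LTL}$-frame $F=(S,R)$ with path $\{s_i\}_i$ and an assignment $v$. For each $\varphi\in\mathrm{Fml}_{\mathbf{LTL}}$ we proceed uniformly in $\varphi$:
\begin{itemize}
  \item form the $\mathrm{GNBA}$ $\mathcal{G}_\varphi$ of Proposition~\ref{gnbatov};
  \item convert it to an $\mathrm{NBA}$ by Proposition~\ref{gnbatonba}, which is a primitive recursive construction;
  \item convert that to a $\mathrm{DRA}$ $\mathcal{A}_\varphi$ by the algorithm of Proposition~\ref{gnbatodra}.
\end{itemize}
$\Sigma^0_2$-induction is available in $\mathrm{ACA}^{\prime}_0$, since $\mathrm{ACA}^{\prime}_0$ proves full arithmetical induction. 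For each $i$, let $\alpha_i$ be the word $A_iA_{i+1}\cdots$ read along the path from $s_i$. This word is computable from $F\oplus v$ uniformly in $i$ and $\varphi$. Since $\mathcal{A}_\varphi$ is deterministic, its run $\rho_{\varphi,i}$ on $\alpha_i$ is unique and $F\oplus v$-computable. Acceptance of $\rho_{\varphi,i}$ is a finite Boolean combination of statements of the form ``state $q$ occurs infinitely often'' and ``state $q$ occurs finitely often'', which are $\Pi^0_2$ and $\Sigma^0_2$ in $F\oplus v$. We therefore set $V(s_i,\varphi)=1$ iff $\mathcal{A}_\varphi$ accepts $\alpha_i$. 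This $V$ is $\Delta^0_3(F\oplus v)$, so it exists as a set using $(F\oplus v)''$, which $\mathrm{ACA}^{\prime}_0$ (indeed $\mathrm{ACA}_0$) provides. On atoms it agrees with $v$ trivially.

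\textbf{Verifying the valuation clauses.} For any $\varphi$ with $\mathrm{deg}(\varphi)=k$, Proposition~\ref{rvelltlacap} gives a $k$-restricted $\mathbf{LTL}$-valuation $V_k$ extending $v$; this is where $\mathrm{ACA}^{\prime}_0$ is genuinely needed, because $k$ may be nonstandard. Chaining Propositions~\ref{gnbatov}, \ref{gnbatonba} and \ref{gnbatodra}, we get $V_k(s_i,\varphi)=1$ iff $\mathcal{A}_\varphi$ accepts $\alpha_i$, i.e.\ $V_k(s_i,\varphi)=V(s_i,\varphi)$. Here Proposition~\ref{gnbatov} is applied with $s_0$ replaced by $s_i$, and we use that the frame from $s_i$ onward is again an $\mathbf{LTL}$-frame. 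To check a clause for $\mathsf{X}\varphi$, $\varphi\to\psi$ or $\varphi\mathsf{U}\psi$, take $k$ to be the maximum of the degrees involved. Every formula in the clause then has degree at most $k$, so $V$ agrees with the single $k$-restricted valuation $V_k$ on all of them, and that $V_k$ satisfies the clause. Hence $V$ satisfies the clause. There is one subtlety: Proposition~\ref{gnbatov} is stated for ``the'' $k$-restricted valuation. To make the comparison well defined, we note in $\mathrm{RCA}_0$ that any two $k$-restricted valuations extending $v$ agree, by $\Sigma^0_1$ (in fact $\Pi^0_1$) induction on formula complexity.

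\textbf{Main obstacle.} We expect the hardest part to be the uniformity and formalization of step (3). We need Proposition~\ref{gnbatodra} as a genuine algorithm that can be applied uniformly to all $\varphi$, including nonstandard ones. We also need its correctness statement, quantified over all words $\alpha$, to be applicable inside $\mathrm{ACA}^{\prime}_0$ to the specific words $\alpha_i$. If $\Sigma^0_2$-induction alone does not suffice for nonstandard automata sizes, we will instead appeal to the stronger induction available in $\mathrm{ACA}^{\prime}_0$. A second point to watch is that the $\Delta^0_3$ definition must yield the same $V$ for all $\varphi$ simultaneously, so the index of $\mathcal{A}_\varphi$ must be obtained primitive recursively from $\varphi$.
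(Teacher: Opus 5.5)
Your proposal is correct and follows essentially the same route as the paper: you define $V(s_i,\varphi)$ as acceptance of $\alpha_i$ by the DRA $\mathcal{A}_\varphi$, decided from $(F\oplus v)''$, and you verify the valuation clauses by comparison with the $k$-restricted valuation that $\mathrm{ACA}^{\prime}_0$ supplies through Proposition~\ref{rvelltlacap} (your max-degree argument spells out the ``other properties'' the paper leaves to the reader). The one detail you skip is that the DRA's transition function is partial, so existence of the run is a $\Pi^0_1$ condition that must be checked first (the paper queries $(F\oplus v)^{\prime}$ for this); this does not affect your $\Delta^0_3$ bound.
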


\begin{proof}
Let $F=(S, R)$ be an $\mathbf{LTL}$-frame and $v:S\times\mathrm{Prop}\rightarrow2$. Fix a formula $\varphi$. Let $\Sigma_{\varphi}$ be the set of all subsets of $\mathrm{Prop} \cap \overline{\mathrm{Sub}}(\varphi)$. Let $\mathcal{G}_{\varphi}$ be the $\mathrm{GNBA}$ obtained from Proposition~\ref{gnbatov}. By Proposition~\ref{gnbatonba}, we have an $\mathrm{NBA}$ $\mathcal{B}_{\varphi}$ equivalent to $\mathcal{G}_{\varphi}$. Furthermore, by Proposition~\ref{gnbatodra}, we obtain a $\mathrm{DRA}$ $\mathcal{A}_{\varphi}$ equivalent to $\mathcal{B}_{\varphi}$. Fix $s_{0}\in S$. Let $\alpha_0\in\Sigma^{\mathbb{N}}_{\varphi}$ be the infinite word defined in Proposition~\ref{gnbatov}. Under this setup, we show that whether $\mathcal{A}_{\varphi}$ accepts $\alpha_0$ can be decided by $(F\oplus v)^{\prime\prime}$.
\vspace{2mm}

Let $\mathcal{A}_{\varphi}=\langle{Q, \Sigma_{\varphi}, q_{0}, \delta, (E_{i}, F_{i})_{i\leq k}\rangle}$. The unique path $\alpha_0=A_{0}A_{1}\dots$ of $s_{0}$ is $(F\oplus v)$-computable. First, we query the oracle $(F\oplus v)^{\prime}$ to determine whether the following condition holds:
\begin{align*}
\forall n\exists q_1\dots\exists q_n\in Q(\forall i\leq n\,\delta(q_i, A_i, q_{i+1})=1).
\end{align*}
If the answer is no, then $\mathcal{A}_{\varphi}$ has no run on $\alpha_0$, and thus $\mathcal{A}_{\varphi}$ does not accept $\alpha_0$. If the answer is yes, we can construct the infinite run $\{q_{i}\}_{i}$ on $\alpha_0$ effectively by using $F\oplus v$ as an oracle. Note that this run is uniquely determined since $\mathcal{A}_{\varphi}$ is a DRA. Next, we query the oracle $(F\oplus v)^{\prime\prime}$ to determine whether:
\begin{align*}
\exists j\leq k\Bigl(\bigl(\exists m(\forall n>m\, q_n\not\in E_j)\bigr)\land\bigl(\forall m^{\prime}(\exists n^{\prime}>m^{\prime}\,q_{n^{\prime}}\in F_j)\bigr)\Bigr).
\end{align*}
If the answer is no, $\{q_{i}\}_{i}$ is not an accepting run, implying that $\mathcal{A}_{\varphi}$ does not accept $\alpha_0$. If the answer is yes, $\{q_{i}\}_{i}$ is indeed an accepting run, and we conclude that $\mathcal{A}_{\varphi}$ accepts $\alpha_0$.

Thus, we define the function $f : S\times\mathrm{Fml}_{\mathbf{LTL}}\rightarrow 2$ as follows: for all $s_0\in S$ and $\varphi\in\mathrm{Fml}_{\mathbf{LTL}}$,
\begin{align*}
f(s_0, \varphi)=1:\Leftrightarrow\mathcal{A}_{\varphi}\text{ accepts }\alpha_0.
\end{align*}

This $f$ is $(F\oplus v)^{\prime\prime}$-computable, thus $f$ exists as a set. We show that $f$ is a $\mathbf{LTL}$-valuation on $F$. Fix $s_0\in S$ and $\varphi\in\mathrm{Fml}_{\mathbf{LTL}}$ with $\mathrm{deg}(\varphi)=k$. Let $\alpha_0=A_0A_1\dots\in\Sigma^{\mathbb{N}}_{\varphi}$ be the infinite word defined in Proposition~\ref{gnbatov}. For all $p\in\mathrm{Prop}$, $f(s_0, p)=1\Leftrightarrow v(s_{0}, p)=1$ by definition of $f$, Proposition~\ref{gnbatov}, Proposition~\ref{gnbatonba}, and Proposition~\ref{gnbatodra}. Thus, $f$ is an extension of $v$.
\vspace{1mm}

Next, we show that the following holds: for every $\varphi\in\mathrm{Fml}_{\mathbf{LTL}}$, 
\begin{align*}
f(s_0, \varphi)=1\Leftrightarrow f(s_0, \neg\varphi)=0.
\end{align*}

By Proposition~\ref{rvelltlacap} and $\mathrm{ACA}^{\prime}_0$, we obtain a $k$-restricted $\mathbf{LTL}$-valuation $V$. By Proposition~\ref{gnbatov}, Proposition~\ref{gnbatonba}, and Proposition~\ref{gnbatodra}, 
\begin{align*}
V(s_0, \varphi)=1&\Leftrightarrow\mathcal{A}_{\varphi}\text{ accepts }\alpha_0,\\
V(s_0, \neg\varphi)=0&\Leftrightarrow\mathcal{A}_{\neg\varphi}\text{ does not accept }\alpha_0.
\end{align*}
Since $V$ is a $k$-restricted $\mathbf{LTL}$-valuation, we obtain the following.
\begin{align*}
\mathcal{A}_{\varphi}\text{ accepts }\alpha_0\Leftrightarrow\mathcal{A}_{\neg\varphi}\text{ does not accept }\alpha_0.
\end{align*}

For each $s_i$ on the unique path starting from $s_0$, let $\alpha_i$ be the corresponding infinite word. Noting that $\alpha_i=A_iA_{i+1}\dots$, we can also verify the other properties of a $\mathbf{LTL}$-valuation.
\end{proof}

\section{Conclusion and Future Research}
In this paper, we have shown not only that frame definability for Geach axioms and for $\mathbf{GL}$, both of which are central objects of study in modal logic, is equivalent to $\mathrm{ACA}^{+}_0$ over $\mathrm{RCA}_0$, but also that the main source of difficulty in frame definability lies in the existence of full valuations. More precisely, we have shown that the equivalence between VEL and $\mathrm{ACA}^{+}_0$ captures the essential reverse-mathematical content of the problem. Furthermore, we have proved that, in Kripke semantics, the computational complexity of valuations is determined more by the computable frame itself than by a computable assignment of propositional variables to states. Thus, in general, $\mathrm{RCA}_0$ cannot prove that a frame has a valuation on it, even when the valuation is intended to be trivial. 
\vspace{2mm}

On the other hand, the reverse-mathematical analysis of VEL for $\mathbf{CTL}$ and $\mathbf{LTL}$ is not yet complete. VEL for $\mathbf{CTL}$ lies between $\Sigma^1_2$-$\mathrm{DC}_0$ and $\Pi^1_1$-$\mathrm{CA}_0$, but its exact position is still unknown. Note that the statement of VEL for $\mathbf{CTL}$ can be written as a $\Pi^1_3$ sentence. If VEL for $\mathbf{CTL}$ is provable in $\Sigma^1_2\text{-$\mathrm{AC}_0$}$ without $\Sigma^1_2$-induction, then, by the $\Pi^1_3$-conservativity of $\Sigma^1_2\text{-$\mathrm{AC}_0$}$ over $\Pi^1_1\text{-$\mathrm{CA}_0$}$, VEL for $\mathbf{CTL}$ is equivalent to $\Pi^1_1\text{-$\mathrm{CA}_0$}$ over $\mathrm{RCA}_0$. 

\begin{que}
Does $\Sigma^1_2\text{-$\mathrm{AC}_0$}$ imply (Restricted) VEL for $\mathbf{CTL}$?
\end{que}

If the above question has an affirmative answer, then both VEL for $\mathbf{CTL}$ and Restricted VEL for $\mathbf{CTL}$ are equivalent to $\Pi^1_1\text{-$\mathrm{CA}_0$}$ over $\mathrm{RCA}_0$.
\vspace{2mm}

As for VEL for $\mathbf{LTL}$, its reverse-mathematical strength also lies between $\mathrm{ACA}^{\prime}_0$ and $\mathrm{ACA}_0$, but its exact position remains open. Given a computable $\mathbf{LTL}$-frame $F$ and a computable assignment $v$ of propositional variables to states, Theorem~\ref{velltlacapf} shows that the corresponding $0,1$-valued function on $\mathbf{LTL}$-formulas and states can be constructed as a $\emptyset^{\prime\prime}$-computable function. On the other hand, if there is an $\mathbf{LTL}$-valuation $V$ on $F$ extending $v$, then no matter how computationally complicated $V$ may appear, the uniqueness of the valuation extending $v$ implies that $V$ must coincide with $f$. Hence this $V$ cannot compute $\emptyset^{\prime\prime\prime}$. Thus, from the viewpoint of computability theory, it is natural to conjecture that VEL for $\mathbf{LTL}$ is equivalent to $\mathrm{ACA}_0$. However, in the standard $\mathbf{LTL}$ model-checking argument, the proof that the truth of an $\mathbf{LTL}$-formula $\varphi$ can be decided by a GNBA relies on the existence of a valuation on the $\mathbf{LTL}$-frame. The current task is to avoid this circularity and prove in $\mathrm{ACA}_0$ that the truth of $\varphi$ can be decided by a GNBA.

\begin{que}
Let $\varphi \in \mathrm{Fml}_{\mathbf{LTL}}$, let $F=(S, R)$ be a $\mathbf{LTL}$-frame, let $v : S \times \mathrm{Prop} \rightarrow 2$, and let $s_{0} \in S$. Let the GNBA $\mathcal{G}_{\varphi}$ and the infinite word $\alpha_0$ be as in Proposition~\ref{gnbatov}. Can we prove in $\mathrm{ACA}_0$ that the assertion ``$\mathcal{G}_{\varphi} \text{ accepts } \alpha_0$''  satisfies the conditions for an $\mathbf{LTL}$-valuation?
\end{que}

\bibliography{refmo}
\begin{comment}
\newpage

\renewcommand{\thesection}{\Alph{section}}
\setcounter{section}{1}

\section*{Appendix A: Some proofs}

\subsection{Proofs of Lemmas in Section~3}

%\subsection{Proofs of Lemmas in Section~4}

%\subsection{Proof of Lemma~\ref{frightk}}

\stepcounter{section}
\end{comment}
\end{document}